\title[The relative modular object]{The relative modular object and Frobenius extensions of finite Hopf algebras}
\author{Kenichi Shimizu}
\date{}
\email{kshimizu@shibaura-it.ac.jp}
\address{Department of Mathematical Sciences, Shibaura Institute of Technology, 307
Fukasaku, Minuma-ku, Saitama-shi, Saitama 337-8570, Japan.}
\subjclass[2010]{18D10,16T05}
\keywords{Hopf algebras; Frobenius extensions; tensor categories; Frobenius functors}
\numberwithin{equation}{section}
\newtheorem{counter}{}[section]
\theoremstyle{definition}
\newtheorem{definition}         [counter]{Definition}
\newtheorem{notation}           [counter]{Notation}
\theoremstyle{plain}
\newtheorem{lemma}              [counter]{Lemma}
\newtheorem{proposition}        [counter]{Proposition}
\newtheorem{theorem}            [counter]{Theorem}
\newtheorem{corollary}          [counter]{Corollary}
\newtheorem*{theorem*}          {Theorem}
\theoremstyle{remark}
\newtheorem{remark}             [counter]{Remark}
\newtheorem{example}            [counter]{Example}
\newcommand{\id}{\mathrm{id}}
\newcommand{\eval}{{\rm ev}}
\newcommand{\coev}{{\rm coev}}
\newcommand{\op}{\mathsf{op}}
\newcommand{\rev}{\mathsf{rev}}
\newcommand{\env}{\mathsf{env}}
\newcommand{\unitobj}{\mathbbm{1}}
\newcommand{\barotimes}{\mathbin{\tilde{\otimes}}}
\newcommand{\Hom}{\mathrm{Hom}}
\newcommand{\End}{\mathrm{End}}
\newcommand{\iHom}{\underline{\mathrm{Hom}}}
\newcommand{\NAT}{\textsc{Nat}}
\newcommand{\REX}{\textsc{Rex}}
\newcommand{\LEX}{\textsc{Lex}}
\begin{document}

\begin{abstract}
  For a certain kind of tensor functor $F: \mathcal{C} \to \mathcal{D}$, we define the relative modular object $\chi_F \in \mathcal{D}$ as the ``difference'' between a left adjoint and a right adjoint of $F$. Our main result claims that, if $\mathcal{C}$ and $\mathcal{D}$ are finite tensor categories, then $\chi_F$ can be written in terms of a categorical analogue of the modular function on a Hopf algebra. Applying this result to the restriction functor associated to an extension $A/B$ of finite-dimensional Hopf algebras, we recover the result of Fischman, Montgomery and Schneider on the Frobenius type property of $A/B$. We also apply our results to obtain a ``braided'' version and a ``bosonization'' version of the result of Fischman et al.
\end{abstract}

\maketitle

\section{Introduction}

Frobenius-type properties of extensions of Hopf algebras and of related algebras have been studied extensively; see, {\it e.g.}, \cite{MR1194305,MR1468830,MR1401518,MR1690111,MR1865524,MR1933711}.
Fischman, Montgomery and Schneider \cite{MR1401518} showed that the Frobenius property of an extension of finite-dimensional Hopf algebras is controlled by their modular functions. The aim of this paper is to formulate and prove a generalization of their result in the setting of finite tensor categories, a class of tensor categories including the representation category of a finite-dimensional Hopf algebra.

To explain our results in more detail, we briefly recall the above-mentioned result of Fischman et al. Recall that, for a finite-dimensional Hopf algebra $H$ over a field $k$, the (right) {\em modular function} $\alpha_H: H \to k$ (also called the distinguished grouplike element in literature) is defined by
\begin{equation}
  \label{eq:mod-func}
  h \cdot \Lambda = \alpha_H(h) \Lambda \quad (h \in H),
\end{equation}
where $\Lambda \in H$ is a non-zero right integral. For an extension $A/B$ of finite-dimensional Hopf algebras over $k$ (meaning that $A$ is such a Hopf algebra and $B$ is a Hopf subalgebra of $A$), the {\em relative modular function} $\chi = \chi_{A/B}$ and the {\em relative Nakayama automorphism} $\beta = \beta_{A/B}$ of $A/B$ are defined respectively by
\begin{equation}
  \label{eq:rel-mod-func}
  \chi(b) = \alpha_A(b_{(1)}) \alpha_B(S(b_{(2)}))
  \text{\quad and \quad}
  \beta(b) = \chi(b_{(1)}) b_{(2)}
\end{equation}
for $b \in B$, where $S$ is the antipode of $B$ and $\Delta(b) = b_{(1)} \otimes b_{(2)}$ is the comultiplication of $b$ in the Sweedler notation \cite[Definition 1.6]{MR1401518}. They showed that $A/B$ is a {\em $\beta$-Frobenius extension}, {\it i.e.}, $A$ is finitely-generated and projective as a right $B$-module (by the Nichols-Zoeller theorem) and there is an isomorphism
\begin{equation}
  \label{eq:intro-FMS}
  {}_B A_A \cong {}_{\beta} \Hom_B(A_B, B_B)
\end{equation}
of $B$-$A$-bimodules \cite[Theorem 1.7]{MR1401518}, where ${}_{\beta}(-)$ means the left $B$-module obtained by twisting the action of $B$ by $\beta$.

To formulate this result in a category-theoretical setting, we consider the restriction functor $\mathrm{res}^A_B: \mbox{\rm mod-}A \to \mbox{\rm mod-}B$ between the categories of right modules. By the standard argument and the Nichols-Zoeller theorem, the functors
\begin{equation}
  \label{eq:intro-L-R-1}
  L := (-) \otimes_B A
  \text{\quad and \quad}
  R := \Hom_B(A, -) \cong (-) \otimes_B \Hom_B(A, B)
\end{equation}
are a left adjoint and a right adjoint of $\mathrm{res}^A_B$, respectively. Hence \eqref{eq:intro-FMS} says that the relative modular function measures the ``difference'' between $L$ and $R$. Here we remark that $\mathrm{res}^A_B$ is in fact a tensor functor. Thus, we are led to the problem of studying the ``difference'' between a left adjoint and a right adjoint of a tensor functor.

Based on this observation, we consider a tensor functor $F: \mathcal{C} \to \mathcal{D}$ having a left adjoint $L$ and a right adjoint $R$. It turns out that, under certain assumptions, there exists a unique (up to isomorphism) object $\chi_F \in \mathcal{D}$ such that $L \cong R(\chi_F \otimes -)$. Our main result is that if $\mathcal{C}$ and $\mathcal{D}$ are finite tensor categories, then the object $\chi_F$ is expressed in terms of the category-theoretical analogue of the modular function introduced by Etingof, Nikshych and Ostrik \cite{MR2097289}. Applying this result to $\mathrm{res}^A_B$, we recover the above-mentioned result of Fischman et al. We also apply our results to obtain some variants of their result.

Now we explain the organization of this paper. Throughout, we denote the base field by $k$. In Section~\ref{sec:prelim}, we collect from \cite{MR1712872,MR0450361,MR0498792,MR0498793,MR2119143,MR3242743} basic results on tensor categories and their module categories. We note that, unlike \cite{MR2119143}, the base field $k$ is arbitrary and the unit object of a finite tensor category is not assumed to be simple; see Definition \ref{def:FTC} for our setting.

In Section~\ref{sec:mod-obj}, we introduce the {\em modular object} $\alpha_{\mathcal{C}} \in \mathcal{C}$ of a finite tensor category $\mathcal{C}$ and study its properties. After a brief discussion on ends, coends and the Deligne tensor product, we introduce an algebra $A$ in $\mathcal{C} \boxtimes \mathcal{C}^{\rev}$. If $\mathcal{M}$ and $\mathcal{N}$ are finite left $\mathcal{C}$-module categories (in the sense of Definition~\ref{def:FTC-mod}), then $\mathcal{C} \boxtimes \mathcal{C}^{\rev}$ acts on the category $\REX(\mathcal{M}, \mathcal{N})$ of $k$-linear right exact functors from $\mathcal{M}$ to $\mathcal{N}$, and hence the category of $A$-modules in $\REX(\mathcal{M}, \mathcal{N})$ is defined. A key observation is that an $A$-module in $\REX(\mathcal{M}, \mathcal{N})$ is precisely a $\mathcal{C}$-module functor. Based on this observation, we define the modular object $\alpha_{\mathcal{C}} \in \mathcal{C}$ in an abstract way (Definition~\ref{def:mod-obj}).
It turns out that $\alpha_{\mathcal{C}}$ is isomorphic to the dual of the distinguished invertible object $D \in \mathcal{C}$ introduced in \cite{MR2097289} whenever the definition of $D$ makes sense (Proposition~\ref{prop:mod-obj-D}). Hence, if $\mathcal{C} = \mbox{mod-}H$ for some finite-dimensional Hopf algebra $H$, then $\alpha_{\mathcal{C}}$ is the $H$-module corresponding to the modular function $\alpha_H$ defined by~\eqref{eq:mod-func}.
We also prove a category-theoretical analogue of Radford's formula for the fourth power of the antipode of a finite-dimensional Hopf algebra, which has been proved in \cite{MR2097289} under certain mild assumptions.

In Section~\ref{sec:rel-mod-obj}, we consider a tensor functor $F: \mathcal{C} \to \mathcal{D}$ between tensor categories (in the sense of \S\ref{subsec:ten-fun}) having a left adjoint $L$ and a right adjoint $R$. We show that $L$ has a left adjoint, if and only if $R$ has a right adjoint, if and only if there is an object $\chi_F \in \mathcal{D}$ such that $R \cong L(- \otimes \chi_F)$. It turns out that such an object $\chi_F$ is unique up to isomorphism. Thus we call $\chi_F$ the {\em relative modular object} of $F$. We prove that $\chi_F$ is invertible and there are also natural isomorphisms
\begin{equation*}
  R \cong L(\chi_F \otimes -)
  \quad \text{and} \quad
  R(\chi_F^* \otimes -) \cong L \cong R(- \otimes \chi_F^*).
\end{equation*}
We shall remark that these results may be an instance of a quite general principle in the theory of monoidal categories. Indeed, similar results are obtained in some different settings in \cite{2014arXiv1411.2236B,2015arXiv150101999B}. In any case, the above results are not sufficient as a generalization of the result of Fischman et al.; their result describes the relation between $L$ and $R$ by the relative modular function $\chi_{A/B}$, while ours do not give any information about $\chi_F$. Our main result in Section~\ref{sec:rel-mod-obj} is, in fact, the following formula for the relative modular object:

\begin{theorem*}[Theorem~\ref{thm:mod-obj-3}]
  Let $F: \mathcal{C} \to \mathcal{D}$ be a tensor functor between finite tensor categories admitting the relative modular object. Then we have
  \begin{equation*}
    F(\alpha_{\mathcal{C}}) \otimes \alpha_{\mathcal{D}}^*
    \cong \chi_F \cong \alpha_{\mathcal{D}}^* \otimes F(\alpha_{\mathcal{C}}).
  \end{equation*}
\end{theorem*}

From this theorem, we see that if $F$ is the restriction functor $\mathrm{res}^A_B$ as above, then $\chi_F$ is the one-dimensional $H$-module corresponding to the relative modular function $\chi_{A/B}$. We will explain in detail how to derive \cite[Theorem 1.7]{MR1401518} from our results; see \S\ref{subsec:rel-mod-obj-formula}.

To apply the above theorem in concrete cases, we need an explicit expression of the modular object. Thus, in Section~\ref{sec:braided-Hopf}, we determine the modular object of the category $\mathcal{V}_H$ of right modules over a Hopf algebra $H$ in a braided finite tensor category $\mathcal{V}$. The modular function $\alpha_H: H \to \unitobj$ is defined by a similar formula as~\eqref{eq:mod-func}, however, the right $H$-module corresponding to $\alpha_H$ is not the modular object of $\mathcal{V}_H$ in general. We express the modular object of $\mathcal{V}_H$ by the modular function of $H$, the modular object of $\mathcal{V}$ and the ``object of integrals'' $\mathrm{Int}(H) \in \mathcal{V}$ in the sense of \cite{MR1759389} (Theorem~\ref{thm:mod-obj-br-Hopf}). As an application, we obtain the following ``braided'' version of \cite[Corollary 1.8]{MR1401518}:

\begin{theorem*}[Theorem~\ref{thm:FMS-braided}]
  Let $\mathcal{V}$ be a braided finite tensor category, and let $A/B$ be an extension of Hopf algebras in $\mathcal{V}$. Then the following assertions are equivalent:
  \begin{enumerate}
  \item The restriction functor $\mathrm{res}^A_B: \mathcal{V}_A \to \mathcal{V}_B$ is a Frobenius functor.
  \item $\mathrm{Int}(A) \cong \mathrm{Int}(B)$ and $\alpha_A \circ i = \alpha_B$, where $i: B \to A$ is the inclusion.
  \end{enumerate}
\end{theorem*}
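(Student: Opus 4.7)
The plan is to reduce the Frobenius condition on $F := \mathrm{Res}^A_B: \mathcal{B}_A \to \mathcal{B}_B$ to the assertion $\chi_F \cong \unitobj_{\mathcal{B}_B}$, and then to compute $\chi_F$ explicitly using the two main results of the paper.

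First I would note that, for this $F$, the equivalent conditions of Theorem~\ref{thm:rel-mod-1} are satisfied (restriction along a map of Hopf algebras is exact and, via the standard integral argument, preserves projectives), so $\chi_F$ exists as an invertible object of $\mathcal{B}_B$ and fits into $R \cong L(\chi_F \otimes -)$. Because tensoring with an invertible object is an autoequivalence, $L \cong R$ (i.e.\ $F$ is Frobenius) is equivalent to $\chi_F \cong \unitobj_{\mathcal{B}_B}$, where $\unitobj_{\mathcal{B}_B}$ denotes $\unitobj_{\mathcal{B}}$ endowed with the trivial $B$-action via the counit. The problem is thus reduced to identifying when $\chi_F$ is trivial.

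Next I would invoke Theorem~\ref{thm:mod-obj-3} to write
\[
  \chi_F \cong F(\alpha_{\mathcal{B}_A}^{}) \otimes \alpha_{\mathcal{B}_B}^{*},
\]
which requires invertibility of $\alpha_{\mathcal{B}_B}$. This is supplied by the hypothesis that $\alpha_{\mathcal{B}}$ is invertible, combined with the explicit formula of Theorem~\ref{thm:mod-obj-br-Hopf} expressing $\alpha_{\mathcal{B}_H}$ in terms of $\alpha_{\mathcal{B}}$, the integral object $\mathrm{Int}(H)$ and the modular function $\alpha_H$. Substituting that formula for both $\alpha_{\mathcal{B}_A}$ and $\alpha_{\mathcal{B}_B}$, the tensor factors involving $\alpha_{\mathcal{B}}$ cancel against those in $\alpha_{\mathcal{B}_B}^{*}$, and one reads off that the underlying object of $\chi_F$ is isomorphic to $\mathrm{Int}(A)^{*} \otimes \mathrm{Int}(B)$, while the residual $B$-action is built from $\alpha_A \circ i$ (coming from the $A$-action on $F(\alpha_{\mathcal{B}_A})$, restricted along $i: B \to A$) and from the inverse of $\alpha_B$ (coming from dualizing inside $\mathcal{B}_B$).

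Asking $\chi_F \cong \unitobj_{\mathcal{B}_B}$ therefore decouples into two conditions: (a) the underlying object equation $\mathrm{Int}(A)^{*} \otimes \mathrm{Int}(B) \cong \unitobj_{\mathcal{B}}$, which by invertibility of the integral objects is equivalent to $\mathrm{Int}(A) \cong \mathrm{Int}(B)$; and (b) the residual $B$-action being the counit action, which is equivalent to $\alpha_A \circ i = \alpha_B$. The main obstacle I anticipate is the careful bookkeeping of these $B$-module structures: $F(\alpha_{\mathcal{B}_A})$ carries only the restricted $A$-action, $\alpha_{\mathcal{B}_B}^{*}$ is dualized inside $\mathcal{B}_B$ using the antipode of $B$, and the outer tensor product uses the coproduct of $B$. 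Tracing these three actions through the identifications supplied by Theorem~\ref{thm:mod-obj-br-Hopf} and verifying that the residual twist on $\mathrm{Int}(A)^{*} \otimes \mathrm{Int}(B)$ is trivial precisely when (2) holds is the crux of the argument.
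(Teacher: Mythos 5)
Your overall route---reduce the Frobenius property of $F = \mathrm{Res}^A_B$ to $\chi_F \cong \unitobj$, compute $\chi_F$ by combining Theorem~\ref{thm:mod-obj-3} with the formula of Theorem~\ref{thm:mod-obj-br-Hopf} for both $\alpha_{\mathcal{B}_A}$ and $\alpha_{\mathcal{B}_B}$, cancel the $\alpha_{\mathcal{B}}$ factors, and decouple the result into the condition $\mathrm{Int}(A) \cong \mathrm{Int}(B)$ on underlying objects and the condition $\alpha_A \circ i = \alpha_B$ on the residual $B$-action---is exactly the paper's, and your bookkeeping of the module structures (the action coming from $\alpha_A \circ i$ against the dualized $\alpha_B$) is correct and in fact more explicit than the paper, which treats that computation as routine. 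The only cosmetic issue there is that the implication ``$L \cong R$ implies $\chi_F \cong \unitobj$'' rests on the uniqueness clause of Theorem~\ref{thm:rel-mod-1} (if $R \cong L(\chi \otimes -)$ for some $\chi$, then $\chi \cong \chi_F$), not merely on invertible objects giving autoequivalences; the paper records this equivalence immediately after the definition of the relative modular object.

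The genuine gap is at the one step of which the paper's proof actually consists: verifying that $F$ satisfies the equivalent conditions of Lemma~\ref{lem:rel-mod-def}, so that Theorem~\ref{thm:mod-obj-3} applies at all. You assert that restriction ``via the standard integral argument, preserves projectives.'' In the classical case the standard argument is the Nichols--Zoeller theorem: $A$ is free over $B$, so a projective $A$-module restricts to a projective $B$-module. No such freeness theorem is available for Hopf algebras in a braided finite tensor category, and the paper flags precisely this issue in Remark~\ref{rem:NZ-thm}. Its substitute is the Etingof--Ostrik criterion cited there: a faithful, exact, \emph{surjective} tensor functor between finite tensor categories satisfies conditions (1)--(7). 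Faithfulness and exactness of $F$ are immediate, and surjectivity is what requires work: for each $X \in \mathcal{B}_B$ the paper constructs an explicit epimorphism of right $B$-modules $X_0 \otimes I_0 \otimes ({}_B A)^* \to X$ (with $I = \mathrm{Int}(B)$), whose source is the restriction of an $A$-module, built from $i_{A/B}^*$ and the inverse of the non-degenerate integral pairing $\phi_B$ of \eqref{eq:br-Hopf-pairing}. So your instinct that integrals rescue the argument is right, but the integral input is the non-degeneracy of $\lambda \circ m$ used to prove surjectivity of $F$, not a projectivity-preservation argument; as written, your proposal contains no proof that the hypotheses of Theorem~\ref{thm:mod-obj-3} hold, and that verification is the substantive content of the theorem.
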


In Section~\ref{sec:boson}, we study the modular object of the tensor category arising from a Hopf algebra in the monoidal center. Given a Hopf algebra $\mathbf{B}$ in the monoidal center $\mathcal{Z}(\mathcal{C})$ of a finite tensor category $\mathcal{C}$, we obtain the finite tensor category ${}_{\mathbf{B}}\mathcal{C}$ of the category of $\mathbf{B}$-modules in $\mathcal{C}$. This category is a category-theoretical counterpart of the Radford-Majid bosonization. By using the results of Section~\ref{sec:braided-Hopf}, we give a formula for the modular object of ${}_{\mathbf{B}}\mathcal{C}$ (Theorem~\ref{thm:mod-obj-boson}). This formula yields another generalization of the result of Fischman et al., which tells when the restriction functor $\mathrm{res}^{\mathbf{A}}_{\mathbf{B}}: {}_{\mathbf{A}}\mathcal{C} \to {}_{\mathbf{B}}\mathcal{C}$ associated to an extension $\mathbf{A}/\mathbf{B}$ of Hopf algebras in $\mathcal{Z}(\mathcal{C})$ is Frobenius (Corollary~\ref{cor:boson-2}).

\section*{Acknowledgments}

The author was supported by Grant-in-Aid for JSPS Fellows (24$\cdot$3606) when part of this work was done.
The author is currently supported by JSPS KAKENHI Grant Number JP16K17568.

\section{Preliminaries}
\label{sec:prelim}

\subsection{Monoidal categories}

A {\em monoidal category} \cite[VII.1]{MR1712872} is a category $\mathcal{C}$ endowed with a functor $\otimes: \mathcal{C} \times \mathcal{C} \to \mathcal{C}$ (called the tensor product), an object $\unitobj \in \mathcal{C}$ (called the unit object) and natural isomorphisms
\begin{equation*}
  (X \otimes Y) \otimes Z \cong X \otimes (Y \otimes Z) \quad \text{and} \quad
  \unitobj \otimes X \cong X \cong X \otimes \unitobj
\end{equation*}
obeying the pentagon and the triangle axiom. If these natural isomorphisms are the identity, then $\mathcal{C}$ is said to be strict. By the Mac Lane coherence theorem, we may assume that all monoidal categories are strict. Given a monoidal category $\mathcal{C}$, we denote by $\mathcal{C}^{\rev}$ the same category but with the reversed tensor product given by $X \otimes^{\rev} Y = Y \otimes X$.

Our convention for dual objects follows Kassel's book \cite{MR1321145}: Let $L$ and $R$ be objects of $\mathcal{C}$, and let $\varepsilon: L \otimes R \to \unitobj$ and $\delta: \unitobj \to R \otimes L$ be morphisms in $\mathcal{C}$. We say that $(L, \varepsilon, \eta)$ is a {\em left dual object} of $R$ and $(R, \varepsilon, \eta)$ is a {\em right dual object} of $L$ if $\varepsilon$ and $\eta$ satisfy the zig-zag equations. We say that $\mathcal{C}$ is {\em rigid} if every object of $\mathcal{C}$ has a left dual object and a right dual object. If this is the case, we denote by $(V^*, \eval, \coev)$ the (fixed) left dual object of $V \in \mathcal{C}$. The assignment $V \mapsto V^*$ extends to an equivalence $(-)^*: \mathcal{C} \to \mathcal{C}^{\op,\rev}$ of monoidal categories, which we call the {\em left duality functor}. A quasi-inverse of $(-)^*$, denoted by ${}^*(-)$, is given by taking a right dual object. For simplicity, we assume that $(-)^*$ and ${}^*(-)$ are strict monoidal and mutually inverse to each other.

\subsection{Modules over a monoidal category}

Let $\mathcal{C}$ be a monoidal category. A {\em left $\mathcal{C}$-module category} is a category $\mathcal{M}$ endowed with a functor $\ogreaterthan: \mathcal{C} \times \mathcal{M} \to \mathcal{M}$ (called the action) and natural isomorphisms
\begin{equation*}
  a_{X,Y,M}: (X \otimes Y) \ogreaterthan M \to X \ogreaterthan (Y \ogreaterthan M)
  \quad \text{and} \quad
  \ell_M: \unitobj \ogreaterthan M \to M
\end{equation*}
obeying certain axioms similar to those for a monoidal category. If $\mathcal{M}$ and $\mathcal{N}$ are left $\mathcal{C}$-module categories, then a (left) {\em lax $\mathcal{C}$-module functor} from $\mathcal{M}$ to $\mathcal{N}$ is a functor $F: \mathcal{M} \to \mathcal{N}$ endowed with a natural transformation
\begin{equation*}
  \xi^{F}_{X,M}: X \ogreaterthan F(M) \to F(X \ogreaterthan M)
  \quad (X \in \mathcal{C}, M \in \mathcal{M})
\end{equation*}
satisfying certain coherent conditions. If the natural transformation $\xi^F$ is invertible, then $F$ is said to be a {\em strong $\mathcal{C}$-module functor}.
Note that $\mathcal{M}^{\op}$ is naturally a left $\mathcal{C}^{\op}$-module category.
A {\em colax $\mathcal{C}$-module functor} from $\mathcal{M}$ to $\mathcal{N}$ can be defined to be a lax $\mathcal{C}^{\op}$-module functor from $\mathcal{M}^{\op}$ to $\mathcal{N}^{\op}$.
See \cite{MR1976459,2014arXiv1406.4204D,MR3242743} for the precise definitions of these and related notions.
For reader's convenience, we note the following two well-known lemmas:

\begin{lemma}[{\cite[Lemma 2.10]{2014arXiv1406.4204D}}]
  \label{lem:C-mod-func-rigid}
  If $\mathcal{C}$ is rigid, then every lax $\mathcal{C}$-module functor is strong.
\end{lemma}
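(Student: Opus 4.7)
The plan is to construct an explicit two-sided inverse to $\xi^F_{X,M}$ using the duality morphisms in $\mathcal{C}$, and then verify it is an inverse via a diagram chase using the coherence axioms for $\xi^F$ (compatibility with the associators $a$ and unitors $\ell$ of $\mathcal{M}$ and $\mathcal{N}$) together with the snake (zig-zag) identities.

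Fix $X \in \mathcal{C}$ and consider the fixed left dual $(X^*, \eval_X, \coev_X)$ with $\eval_X: X^* \otimes X \to \unitobj$ and $\coev_X: \unitobj \to X \otimes X^*$. Define a candidate inverse $\zeta^F_{X,M}: F(X \ogreaterthan M) \to X \ogreaterthan F(M)$ as the composite
\begin{equation*}
\begin{split}
F(X \ogreaterthan M) &\xrightarrow{\ell^{-1}} \unitobj \ogreaterthan F(X \ogreaterthan M)
\xrightarrow{\coev_X \ogreaterthan \id} (X \otimes X^*) \ogreaterthan F(X \ogreaterthan M) \\
&\xrightarrow{a} X \ogreaterthan (X^* \ogreaterthan F(X \ogreaterthan M))
\xrightarrow{\id \ogreaterthan \, \xi^F_{X^*,\, X \ogreaterthan M}} X \ogreaterthan F(X^* \ogreaterthan (X \ogreaterthan M)) \\
&\xrightarrow{\id \ogreaterthan F(a^{-1})} X \ogreaterthan F((X^* \otimes X) \ogreaterthan M)
\xrightarrow{\id \ogreaterthan F(\eval_X \ogreaterthan \id)} X \ogreaterthan F(\unitobj \ogreaterthan M)
\xrightarrow{\id \ogreaterthan F(\ell)} X \ogreaterthan F(M).
\end{split}
\end{equation*}
Naturality in $M$ is automatic from the naturality of $\xi^F$, $a$ and $\ell$.

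For $\zeta^F \circ \xi^F = \id$, slide $\xi^F_{X,M}$ past the initial $(\coev_X \ogreaterthan \id) \circ \ell^{-1}$ by naturality, so that after one application of the hexagon axiom for $\xi^F$ (with respect to the tensor product $X^* \otimes X$) the composite becomes $F$ applied to the zig-zag $(\eval_X \otimes \id_X) \circ (\id_X \otimes \coev_X): X \to X$ acting on $M$, which is $\id_X$ by the snake identity; the unit axiom for $\xi^F$ then kills the remaining unitors and shows the whole composite is $\xi^F$ composed with a scaling by $\id_X$, giving $\id_{X \ogreaterthan F(M)}$. For $\xi^F \circ \zeta^F = \id$, one performs the dual manipulation: insert $\coev_X$ inside the lax map by naturality in the argument $X \ogreaterthan M$, apply the hexagon axiom in the opposite direction to combine the two occurrences of $\xi^F$, and then reduce using the other snake identity $(\id_{X^*} \otimes \eval_X) \circ (\coev_X \otimes \id_{X^*}) = \id_{X^*}$ applied to $M$ inside $F$.

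The main obstacle is keeping track of associators and unitors correctly in the two diagram chases; although the argument is conceptually transparent (it is the module-category analogue of the classical fact that a colax monoidal functor out of a rigid monoidal category is automatically strong), writing out the full commuting diagram requires careful bookkeeping. Once strictness is invoked (as the author does by the Mac Lane coherence theorem), all structural isomorphisms disappear and each verification collapses to a single application of the hexagon for $\xi^F$ followed by a snake identity. Finally, since the construction of $\zeta^F_{X,M}$ is itself natural in $X$ as well, one obtains that $\xi^F$ is invertible as a natural transformation, so $F$ is strong as required.
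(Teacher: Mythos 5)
Your construction is correct, and it is essentially the standard one: the paper itself gives no proof of this lemma, deferring to the remark in \cite{MR1976459} and the detailed proof in \cite{2014arXiv1406.4204D}, and the candidate inverse you define is exactly the map used there. Two details in your verification sketch need repair, though neither derails the argument. First, your ``other snake identity'' is mis-stated: $(\id_{X^*} \otimes \eval_X) \circ (\coev_X \otimes \id_{X^*})$ is not even composable; the second zig-zag identity reads $(\eval_X \otimes \id_{X^*}) \circ (\id_{X^*} \otimes \coev_X) = \id_{X^*}$. Second --- and this makes the repair trivial --- neither composite actually uses that second identity. Working strictly, with $\xi_{X,M}: X \ogreaterthan F(M) \to F(X \ogreaterthan M)$: for $\zeta \circ \xi$, naturality slides $\xi_{X,M}$ past $\coev_X \ogreaterthan \id$, the hexagon merges the two occurrences of $\xi$ into $\xi_{X^* \otimes X, M}$, and naturality of $\xi$ in the $\mathcal{C}$-variable applied to $\eval_X$, together with the unit axiom $\xi_{\unitobj,M} = \id$, leaves $\bigl((\id_X \otimes \eval_X) \circ (\coev_X \otimes \id_X)\bigr) \ogreaterthan \id_{F(M)}$; for $\xi \circ \zeta$, the dual chase (naturality of $\xi_{X,-}$ applied to $\eval_X \ogreaterthan \id_M$, hexagon, then naturality in the $\mathcal{C}$-variable applied to $\coev_X$) leaves $F\bigl(((\id_X \otimes \eval_X) \circ (\coev_X \otimes \id_X)) \ogreaterthan \id_M\bigr)$. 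Both collapse by the \emph{same} zig-zag identity on $X$; the only asymmetry is that the zig-zag sits outside $F$ in the first composite and inside $F$ in the second, the reverse of what your sketch asserts. Finally, your parenthetical ``classical fact'' is false as stated: a lax monoidal functor out of a rigid category need not be strong --- $\Hom_{\mathcal{C}}(\unitobj, -)$ from a rigid tensor category to vector spaces is lax monoidal and rarely strong. The module-functor statement holds precisely because the duality data $(\eval_X, \coev_X)$ live entirely in $\mathcal{C}$, in the acting variable external to $F$, so the zig-zag can be threaded through $\xi$ by naturality alone; no analogous escape exists for the monoidal-functor version, where both tensorands pass through $F$.
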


\begin{lemma}[{\cite[Lemma 2.11]{2014arXiv1406.4204D}}]
  \label{lem:C-mod-func-adj}
  Let $F: \mathcal{M} \to \mathcal{N}$ be a functor between left $\mathcal{C}$-module categories, and suppose that $F$ has a right adjoint $G: \mathcal{N} \to \mathcal{M}$. Then there is a one-to-one correspondence between colax $\mathcal{C}$-module functor structures on $F$ and lax $\mathcal{C}$-module functor structures on $G$.
\end{lemma}

If $\mathcal{C}$ is rigid, then lax $\mathcal{C}$-module functors and strong $\mathcal{C}$-module functors are simply called $\mathcal{C}$-module functors in view of Lemma~\ref{lem:C-mod-func-rigid}.
Lemma~\ref{lem:C-mod-func-adj} says that, if $\mathcal{C}$ is rigid, then the class of $\mathcal{C}$-module functors is closed under taking adjoints.

\subsection{The centralizer of a monoidal functor}

Let $F: \mathcal{C} \to \mathcal{D}$ be a monoidal functor between monoidal categories $\mathcal{C}$ and $\mathcal{D}$ with structure morphisms
\begin{equation*}
  F_0: \unitobj \to F(\unitobj)
  \quad \text{and} \quad
  F_2(X, Y): F(X) \otimes F(Y) \to F(X \otimes Y)
  \quad (X, Y \in \mathcal{C}).
\end{equation*}
The {\em centralizer} of $F$, originally introduced by Majid \cite{MR1151906} under the name ``the dual category of the functored category'', is a category $\mathcal{Z}(F)$ defined as follows:
An object of $\mathcal{Z}(F)$ is a pair $(V, \sigma)$ consisting of an object $V \in \mathcal{D}$ and a natural isomorphism
\begin{equation*}
  \sigma(X): V \otimes F(X) \to F(X) \otimes V
  \quad (X \in \mathcal{C})
\end{equation*}
such that the equations $\sigma(\unitobj) \circ (\id_V \otimes F_0) = F_0 \otimes \id_V$ and
\begin{equation*}
  \sigma(X \otimes Y) \circ (\id_{V} \otimes F_2(X, Y))
  = (F_2(X, Y) \otimes \id_V) \circ (\id_X \otimes \sigma(Y)) \circ (\sigma(X) \otimes \id_Y)
\end{equation*}
hold for all $X, Y \in \mathcal{C}$. A morphism from $(V, \sigma)$ to $(W, \tau)$ in $\mathcal{Z}(F)$ is a morphism $f: V \to W$ in $\mathcal{D}$ such that
\begin{equation*}
  (\id_{F(X)} \otimes f) \circ \sigma(X) = \tau(X) \circ (f \otimes \id_{F(X)})
\end{equation*}
holds for all $X \in \mathcal{C}$. The centralizer of $\id_{\mathcal{C}}$ is called the (monoidal) {\em center} of $\mathcal{C}$ and denoted by $\mathcal{Z}(\mathcal{C})$. The category $\mathcal{Z}(F)$ is in fact a monoidal category and $\mathcal{Z}(\mathcal{C})$ is moreover a braided monoidal category.

\subsection{Modules over an algebra}

Let $\mathcal{C}$ be a monoidal category. An {\em algebra} in $\mathcal{C}$ is a synonym for a monoid in $\mathcal{C}$ \cite[VII.3]{MR1712872}. Given algebras $A$ and $B$ in $\mathcal{C}$, we denote by ${}_A \mathcal{C}$, $\mathcal{C}_B$ and ${}_A \mathcal{C}_B$ the categories of left $A$-modules, right $B$-modules and $A$-$B$-bimodules in $\mathcal{C}$, respectively. For our purpose, it is convenient to extend the notion of modules over an algebra in the following way (see \cite{MR0498792}).

\begin{definition}
  \label{def:modules}
  Given an algebra $A$ in $\mathcal{C}$, we denote by ${}_A \mathcal{M}$ the Eilenberg-Moore category of the monad $A \ogreaterthan (-)$ on $\mathcal{M}$. An object of the category ${}_A \mathcal{M}$ will be referred to as a {\em left $A$-module in $\mathcal{M}$}. A right $A$-module in a right $\mathcal{C}$-module category and an $A$-$B$-bimodule in a $\mathcal{C}$-bimodule category are also defined analogously.
\end{definition}

Note that $\mathcal{C}$ is a $\mathcal{C}$-bimodule category by the tensor product. The notation and the terminology given in Definition~\ref{def:modules} are consistent with those introduced at the beginning of this subsection.

\subsection{Closed module categories}
\label{subsec:closed-C-mod}

Let $\mathcal{C}$ be a monoidal category, and let $\mathcal{M}$ be a left $\mathcal{C}$-module category. We say that $\mathcal{M}$ is {\em closed} if, for every object $M \in \mathcal{M}$, the functor $\mathcal{C} \to \mathcal{M}$ defined by $X \mapsto X \ogreaterthan M$ has a right adjoint ({\it cf}. the definition of closed monoidal categories). If this is the case, then we denote by $\iHom_{\mathcal{M}}(M, -)$ a right adjoint of the functor $(-) \ogreaterthan M$. By the parameter theorem for adjunctions, the assignment $(M, N) \mapsto \iHom_{\mathcal{M}}(M, N)$ extends to a functor from $\mathcal{M}^{\op} \times \mathcal{M}$ to $\mathcal{C}$ such that there is a natural isomorphism
\begin{equation}
  \label{eq:internal-Hom-adj}
  \Hom_{\mathcal{M}}(X \ogreaterthan M, M') \cong \Hom_{\mathcal{C}}(X, \iHom_{\mathcal{M}}(M, M'))
\end{equation}
for $M, M' \in \mathcal{M}$ and $X \in \mathcal{C}$. The functor $\iHom_{\mathcal{M}}$ is called the {\em internal Hom functor} for $\mathcal{M}$ and makes $\mathcal{M}$ a $\mathcal{C}$-enriched category. For simplicity, we often write $\iHom_{\mathcal{M}}$ as $\iHom$ if $\mathcal{M}$ is obvious from the context.

\subsection{Finite tensor categories}

Given an algebra $A$ over $k$ ($=$ an associative unital algebra over $k$), we denote by $\mbox{\rm mod-}A$ the category of finite-dimensional right $A$-modules.
A $k$-linear category is said to be {\em finite} if it is $k$-linearly equivalent to $\mbox{\rm mod-}A$ for some finite-dimensional algebra $A$ over $k$.

\begin{definition}
  \label{def:FTC}
  A {\em finite tensor category} over $k$ is a rigid monoidal category $\mathcal{C}$ such that $\mathcal{C}$ is a finite abelian category over $k$ and the tensor product of $\mathcal{C}$ is $k$-linear in each variable.
\end{definition}

Unlike \cite{MR2119143} (and like \cite{2013arXiv1312.7188D,2014arXiv1406.4204D}), we do not assume that the unit object of a finite tensor category is a simple object (thus our finite tensor category is in fact a {\em finite multi-tensor category} in the sense of \cite{MR2119143}).

\subsection{Finite module categories}
\label{subsec:fin-mod-cat}

Let $\mathcal{C}$ be a finite tensor category over $k$. In this paper, we mainly deal with the following class of left $\mathcal{C}$-module categories:

\begin{definition}
  \label{def:FTC-mod}
  A {\em finite left $\mathcal{C}$-module category} is a left $\mathcal{C}$-module category $\mathcal{M}$ such that $\mathcal{M}$ is a finite abelian category and the action $\ogreaterthan: \mathcal{C} \times \mathcal{M} \to \mathcal{M}$ is $k$-linear in each variable and right exact in the first variable. Finite right module categories and finite bimodule categories are defined analogously.
\end{definition}

Let $\mathcal{M}$ be a finite left $\mathcal{C}$-module category.
Then the action $\ogreaterthan: \mathcal{C} \times \mathcal{M} \to \mathcal{M}$ is exact in the second variable.
Indeed, $X^* \ogreaterthan (-)$ and ${}^* X \ogreaterthan (-)$ are a left adjoint and a right adjoint of the functor $X \ogreaterthan (-)$, respectively.

It is well-known that a $k$-linear functor between finite abelian categories over $k$ is left (right) exact if and only if it has a left (right) adjoint (a detailed proof is found in \cite[\S1.2]{2014arXiv1406.4204D}). Thus a finite module category is a closed module category.

\begin{example}
  \label{ex:copower}
  Every finite abelian category $\mathcal{M}$ over $k$ is naturally a finite module category over $\mathcal{V} := \mbox{{\rm mod}-}k$ by the action $\bullet$ defined by
  \begin{equation*}
    \Hom_{\mathcal{M}}(V \bullet M, M') \cong \Hom_k(V, \Hom_{\mathcal{M}}(M, M'))
    \quad (V \in \mathcal{V}, M, M' \in \mathcal{M}).
  \end{equation*}
\end{example}

\begin{example}
  If $A$ is an algebra in $\mathcal{C}$, then the category $\mathcal{C}_A$ of right $A$-modules is naturally a finite left $\mathcal{C}$-module category.
\end{example}

Now let $\mathcal{M}$ be a finite left $\mathcal{C}$-module category. We fix $M \in \mathcal{M}$ and consider the functor $Y_M := \iHom_{\mathcal{M}}(M, -)$ from $\mathcal{M}$ to $\mathcal{C}$. The object $M$ is said to be {\em
 $\mathcal{C}$-projective} if $Y_M$ is exact, and is called a {\em $\mathcal{C}$-generator} if $Y_M$ is faithful.
Following \cite{2014arXiv1406.4204D}, $M$ is $\mathcal{C}$-projective if and only if $P \ogreaterthan M \in \mathcal{M}$ is projective for every projective object $P \in \mathcal{C}$.
Thus an {\em exact $\mathcal{C}$-module category} \cite[Definition 3.1]{MR2119143} is precisely a finite $\mathcal{C}$-module category whose every object is $\mathcal{C}$-projective.

The object $A := \iHom(M, M)$ is an algebra in $\mathcal{C}$ and acts on every object of the form $\iHom(M, M')$ from the left. Hence the functor $Y_M$ induces a functor
\begin{equation*}
  K_M: \mathcal{M} \to \mathcal{C}_A, \quad M' \mapsto \iHom(M, M') \quad (M \in \mathcal{C}).
\end{equation*}
The functor $K_M$ is in fact the comparison functor of \eqref{eq:internal-Hom-adj}. Moreover, it is endowed with a structure of a left $\mathcal{C}$-module functor inherited from $Y_M$. Applying the Barr-Beck theorem \cite[VI.7]{MR1712872}, we see that $K_M$ is an equivalence of $\mathcal{C}$-module categories if and only if $M$ is a $\mathcal{C}$-projective $\mathcal{C}$-generator \cite[Theorem 7.10.1]{MR3242743}.
As a consequence, every finite $\mathcal{C}$-module category is of the form $\mathcal{C}_A$ for some algebra $A$ in $\mathcal{C}$ (see also \cite{MR3242743}).

For finite $\mathcal{C}$-module categories $\mathcal{M}$ and $\mathcal{N}$, we denote by $\REX_{\mathcal{C}}(\mathcal{M}, \mathcal{N})$ the category of $k$-linear right exact $\mathcal{C}$-module functors from $\mathcal{M}$ to $\mathcal{N}$. The following variant of the Eilenberg-Watts theorem is important when we deal with finite module categories (see Pareigis \cite{MR0450361,MR0498792,MR0498793} for a proof under a more general setting; see also \cite{2014arXiv1406.4204D}).

\begin{lemma}
  \label{lem:EW-thm}
  For algebras $A$ and $B$ in $\mathcal{C}$, there is an equivalence
  \begin{equation*}
    {}_A \mathcal{C}_B \xrightarrow{\quad \approx \quad} \REX_{\mathcal{C}}(\mathcal{C}_A, \mathcal{C}_B),
    \quad M \mapsto M \otimes_A (-).
  \end{equation*}
\end{lemma}

\section{Modular object}
\label{sec:mod-obj}

\subsection{Ends and coends}

The aim of this section is to introduce a categorical analogue of the modular function on a Hopf algebra, which we call the {\em modular object}. As preliminaries, we recall from \cite{MR1712872} the notion of {\em ends} and {\em coends}. Let $\mathcal{C}$ and $\mathcal{V}$ be categories, and let $S, T: \mathcal{C}^{\op} \times \mathcal{C} \to \mathcal{V}$ be functors. A {\em dinatural transformation} $\xi: S \xrightarrow{\ .. \ } T$ is a family
\begin{equation*}
  \xi = \{ \xi_{X}: S(X,X) \to T(X,X) \}_{X \in \mathcal{C}}
\end{equation*}
of morphisms in $\mathcal{V}$ such that the equation
\begin{equation*}
  T(\id_X, f) \circ \xi_X \circ S(f, \id_X) = T(f, \id_Y) \circ \xi_Y \circ S(\id_Y, f)
\end{equation*}
holds for all morphism $f: X \to Y$ in $\mathcal{C}$. We regard an object $X \in \mathcal{V}$ as the constant functor from $\mathcal{C}^{\op} \times \mathcal{C}$ to $\mathcal{V}$ sending all objects to $X$. Then an {\em end} of $S$ is a pair $(E, p)$ consisting of an object $E \in \mathcal{V}$ and a dinatural transformation $p: E \xrightarrow{\ .. \ } S$ satisfying a certain universal property. Dually, a {\em coend} of $T$ is a pair $(C, i)$ consisting of an object $C \in \mathcal{V}$ and a universal dinatural transformation $i: T \xrightarrow{\ ..\ } C$. The end of $S$ and the coend of $T$ are expressed as
\begin{equation*}
  E = \int_{X \in \mathcal{C}} S(X,X)
  \quad \text{and} \quad
  C = \int^{X \in \mathcal{C}} T(X,X),
\end{equation*}
respectively; see \cite{MR1712872} for more details.

We now suppose that a coend $(C, i)$ of $T: \mathcal{C}^{\op} \times \mathcal{C} \to \mathcal{V}$ exists. If $\mathcal{C}$ has an equivalence $(-)^*: \mathcal{C} \to \mathcal{C}^{\op}$ of categories ({\it e.g.}, when $\mathcal{C}$ is a rigid monoidal category), then the pair $(T, i')$, where $i'_X = i_{X^*}^{}$, is a coend of $(X, Y) \mapsto T(Y^*, X^*)$. This result can be expressed symbolically as follows:
\begin{equation}
  \label{eq:coend-1}
  \int^{X \in \mathcal{C}} T(X,X) = \int^{X \in \mathcal{C}} T(X^*, X^*).
\end{equation}
If $\mathcal{V}$ has an equivalence $(-)^*: \mathcal{V} \to \mathcal{V}^{\op}$, then the pair $(C^*, p)$, where $p_X = (i_X)^*$, is an end of the functor $(X, Y) \mapsto T(Y, X)^*$. Symbolically, we have
\begin{equation}
  \label{eq:coend-2}
  \Big( \int^{X \in \mathcal{C}} T(X,X) \Big)^* = \int_{X \in \mathcal{C}} T(X,X)^*.
\end{equation}

\subsection{The Deligne tensor product of abelian categories}

In what follows, we consider functors between categories whose objects are functors. To avoid confusion, we adopt the following convention:

\begin{notation}
  For a functor $\Psi$ whose source is a category consisting of functors and an object $F$ of the source category, we usually write $\Psi[F]$ instead of $\Psi(F)$.
\end{notation}

For $k$-linear abelian categories $\mathcal{M}$ and $\mathcal{N}$, their {\em Deligne tensor product} \cite[\S5]{MR1106898} is a $k$-linear abelian category $\mathcal{M} \boxtimes \mathcal{N}$ endowed with a functor $\boxtimes: \mathcal{M} \times \mathcal{N} \to \mathcal{M} \boxtimes \mathcal{N}$ that is $k$-linear and right exact in each variable and is universal among such functors out of $\mathcal{M} \times \mathcal{N}$. If $A$ and $B$ are finite-\hspace{0pt}dimensional algebras over $k$, then
\begin{equation}
  \label{eq:Deligne-tensor-fin-ab}
  (\mbox{\rm mod-}A) \boxtimes (\mbox{\rm mod-}B) = \mbox{\rm mod-}(A \otimes_k B)
\end{equation}
with $M \boxtimes N = M \otimes_k N$. Thus, if $\mathcal{M}$ and $\mathcal{N}$ are finite abelian categories, then their Deligne tensor product exists and the functor $\boxtimes: \mathcal{M} \times \mathcal{N} \to \mathcal{M} \boxtimes \mathcal{N}$ is exact in each variable. Moreover, we have a natural isomorphism
\begin{equation}
  \Hom_{\mathcal{M} \boxtimes \mathcal{N}}(M \boxtimes M', N \boxtimes N')
  \cong \Hom_{\mathcal{M}}(M, M') \otimes_k \Hom_{\mathcal{N}}(N, N')
\end{equation}
for $M, M' \in \mathcal{M}$ and $N, N' \in \mathcal{N}$.

For $k$-linear abelian categories $\mathcal{M}$ and $\mathcal{N}$, we denote by $\REX(\mathcal{M}, \mathcal{N})$ the category of $k$-linear right exact functors from $\mathcal{M}$ to $\mathcal{N}$. We also note the following property of the Deligne tensor product.

\begin{lemma}
  \label{lem:Deligne-tensor-rex}
  For finite abelian categories $\mathcal{M}$ and $\mathcal{N}$ over $k$, the following functor $\Phi$ is an equivalence of $k$-linear categories:
  \begin{equation*}
    \Phi: \mathcal{M}^{\op} \boxtimes \mathcal{N} \to \REX(\mathcal{M}, \mathcal{N}),
    \quad M \boxtimes N \mapsto \Hom_{\mathcal{M}}(-, M)^* \bullet N,
  \end{equation*}
  where $\bullet$ is given in Example~\ref{ex:copower}. A quasi-inverse $\overline{\Phi}$ of $\Phi$ is given by
  \begin{equation}
    \label{eq:Deligne-tensor-rex-1}
    \overline{\Phi}: \REX(\mathcal{M}, \mathcal{N}) \to \mathcal{M}^{\op} \boxtimes \mathcal{N},
    \quad F \mapsto \int_{X \in \mathcal{M}} X \boxtimes F(X).
  \end{equation}
\end{lemma}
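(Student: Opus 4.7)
The plan is to proceed in three steps. First, I would check that $(M, N) \mapsto \Hom_{\mathcal{M}}(-, M)^* \bullet N$ is a $k$-bilinear bifunctor from $\mathcal{M}^{\op} \times \mathcal{N}$ to $\REX(\mathcal{M}, \mathcal{N})$ that is right exact in each variable: right exactness in $N$ is immediate since $V \bullet (-)$ is a finite direct sum of copies of its argument, while right exactness in $M \in \mathcal{M}^{\op}$ follows from left exactness of $\Hom_{\mathcal{M}}(-, M)$ in $M$ together with exactness of $k$-linear dualization on finite-dimensional vector spaces. The universal property of the Deligne tensor product then gives the unique right exact $k$-linear functor $\Phi$ with the stated values on simple tensors.

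Second, to see $\Phi$ is an equivalence, I would reduce to $\mathcal{M} = \mbox{\rm mod-}A$ and $\mathcal{N} = \mbox{\rm mod-}B$ for finite-dimensional $k$-algebras $A$ and $B$. The duality $(\mbox{\rm mod-}A)^{\op} \simeq \mbox{\rm mod-}A^{\op}$ via $M \mapsto M^* = \Hom_k(M, k)$, combined with the realization of the Deligne tensor product recalled in the paper, gives $\mathcal{M}^{\op} \boxtimes \mathcal{N} \simeq \mbox{\rm mod-}(A^{\op} \otimes_k B) \simeq {}_A\mbox{\rm mod}_B$ under $M \boxtimes N \mapsto M^* \otimes_k N$; and by Lemma~\ref{lem:EW-thm}, $\REX(\mathcal{M}, \mathcal{N}) \simeq {}_A\mbox{\rm mod}_B$ via $F \mapsto F(A)$. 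Evaluating $\Phi(M \boxtimes N)$ at $A$ gives $\Hom_A(A, M)^* \bullet N \cong M^* \otimes_k N$ with left $A$-action dual to the right $A$-action on $M$ and right $B$-action from $N$, matching the image of $M \boxtimes N$ on the other side. Hence $\Phi$ corresponds to the identity functor under these equivalences and is itself an equivalence.

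Finally, for the quasi-inverse, fix $F \in \REX(\mathcal{M}, \mathcal{N})$ and set $E := \Phi^{-1}(F)$. I would show that $E$ satisfies the universal property of the end $\int_Y Y \boxtimes F(Y)$: for generating objects of the form $M \boxtimes N$, the copower adjunction and finite-dimensionality of Hom spaces yield
\begin{align*}
\Hom_{\mathcal{M}^{\op} \boxtimes \mathcal{N}}(M \boxtimes N, E)
&\cong \NAT(\Hom_{\mathcal{M}}(-, M)^* \bullet N, F) \\
&\cong \int_Y \Hom_{\mathcal{M}}(Y, M) \otimes_k \Hom_{\mathcal{N}}(N, F(Y)) \\
&\cong \int_Y \Hom_{\mathcal{M}^{\op} \boxtimes \mathcal{N}}(M \boxtimes N, Y \boxtimes F(Y)),
\end{align*}
where the last line uses the Hom formula of the preceding lemma. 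Since objects of the form $M \boxtimes N$ generate $\mathcal{M}^{\op} \boxtimes \mathcal{N}$, a standard extension argument identifies $E$ with the end $\int_Y Y \boxtimes F(Y)$ and hence $\overline{\Phi}$ with a quasi-inverse of $\Phi$. The main obstacle I anticipate is the bookkeeping at this last step: verifying that the displayed isomorphisms are natural in $M \boxtimes N$ (not only pointwise) and that the universal property extends cleanly from simple tensors to arbitrary objects via right exactness of $\boxtimes$. Existence of the end is not an independent issue, since $E = \Phi^{-1}(F)$ itself supplies the representing object once $\Phi$ is known to be an equivalence.
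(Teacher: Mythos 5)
Your proposal is correct and takes essentially the same route as the paper: you reduce to $\mathcal{M} = \mbox{\rm mod-}A$, $\mathcal{N} = \mbox{\rm mod-}B$, compose the bimodule realization of $\mathcal{M}^{\op} \boxtimes \mathcal{N}$ with the Eilenberg--Watts equivalence of Lemma~\ref{lem:EW-thm}, and then identify $\overline{\Phi}[F]$ with the end via the same representability computation on objects $M \boxtimes N$, extended by colimits, with existence of the end coming for free from the representing object. Your extra checks (well-definedness of $\Phi$ via the universal property of $\boxtimes$, and matching the bimodule structures by evaluating at $A$) merely make explicit steps the paper leaves implicit.
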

\begin{proof}
  We may assume that $\mathcal{M} = \mbox{\rm mod-}A$ and $\mathcal{N} = \mbox{\rm mod-}B$ for some finite-\hspace{0pt}dimensional algebras $A$ and $B$ over $k$. Then, by the above-mentioned realization of the Deligne tensor product, the following functor is an equivalence:
  \begin{equation*}
    \mathcal{M}^{\op} \boxtimes \mathcal{N}
    \xrightarrow{\quad \approx \quad} A\mbox{\rm -mod-}B,
    \quad M \boxtimes N \mapsto M^* \otimes_k N,
  \end{equation*}
  where $A\mbox{\rm -mod-}B$ is the category of finite-dimensional $A$-$B$-bimodules. Since both $A$ and $B$ are finite-dimensional, we also have an equivalence
  \begin{equation*}
    A\mbox{\rm -mod-}B \xrightarrow{\quad \approx \quad} \REX(\mathcal{M}, \mathcal{N}),
    \quad X \mapsto (-) \otimes_A X.
  \end{equation*}
  The functor $\Phi$ is an equivalence as the composition of these two equivalences. Now let $\overline{\Phi}$ be a quasi-inverse of $\Phi$. Then we have natural isomorphisms
  \begin{align*}
    \Hom_{\mathcal{M}^{\op} \boxtimes \mathcal{N}}(M \boxtimes N, \overline{\Phi}[F])
    & \cong \NAT(\Phi(M \boxtimes N), F) \\
    & \cong \textstyle \int_{X \in \mathcal{M}} \Hom_{\mathcal{N}}(\Hom_{\mathcal{M}}(X, M)^* \bullet N, F(X)) \\
    & \cong \textstyle \int_{X \in \mathcal{M}} \Hom_{k}(\Hom_{\mathcal{M}}(X, M)^*, \Hom_{\mathcal{N}}(N, F(X))) \\
    & \cong \textstyle \int_{X \in \mathcal{M}} \Hom_{\mathcal{M}}(X, M) \otimes \Hom_{\mathcal{N}}(N, F(X)) \\
    & \cong \textstyle \int_{X \in \mathcal{M}} \Hom_{\mathcal{M}^{\op} \boxtimes \mathcal{N}}(M \boxtimes N, X \boxtimes F(X))
  \end{align*}
  for $M \in \mathcal{M}$, $N \in \mathcal{N}$ and $F \in \REX(\mathcal{M}, \mathcal{N})$. Since every object of $\mathcal{M}^{\op} \boxtimes \mathcal{N}$ is a finite colimit of objects of the form $M \boxtimes N$, the above computation implies that $\overline{\Phi}[F]$ represents the functor
  \begin{equation*}
    (\mathcal{M}^{\op} \boxtimes \mathcal{N})^{\op} \to \mathbf{Vec},
    \quad L \mapsto \int_{X \in \mathcal{M}} \Hom_{\mathcal{M}^{\op} \boxtimes \mathcal{N}}(L, X \boxtimes F(X)),
  \end{equation*}
  where $\mathbf{Vec}$ is the category of all vector spaces over $k$. Hence the end in~\eqref{eq:Deligne-tensor-rex-1} indeed exists and is isomorphic to $\overline{\Phi}[F]$.
\end{proof}

\subsection{Tensor product of module categories}

Let $\mathcal{C}$ and $\mathcal{D}$ be finite tensor categories over a field $k$. Then $\mathcal{E} := \mathcal{C} \boxtimes \mathcal{D}^{\rev}$ is a monoidal category with the component-wise tensor product. The monoidal category $\mathcal{E}$ is not rigid in general. We note that $\mathcal{E}$ is rigid (and therefore a finite tensor category) if, for example, the base field $k$ is perfect \cite[\S5]{MR1106898}.

By a finite $\mathcal{C}$-$\mathcal{D}$-bimodule category, we mean a $\mathcal{C}$-$\mathcal{D}$-bimodule category that is finite both as a left $\mathcal{C}$- and a right $\mathcal{D}$-module category. Although $\mathcal{E}$ is not a finite tensor category in general, we abuse terminology and say that an $\mathcal{E}$-module category $\mathcal{M}$ is finite if it is a finite abelian category and the action of $\mathcal{E}$ on $\mathcal{M}$ is $k$-linear and right exact in each variable. By the universal property of the Deligne tensor product, a finite $\mathcal{C}$-$\mathcal{D}$-bimodule category can be identified with a finite $\mathcal{E}$-module category.

Now let $\mathcal{M}$ and $\mathcal{N}$ be finite module categories over $\mathcal{D}$ and $\mathcal{C}$, respectively. Then $\mathcal{M}^{\op} \boxtimes \mathcal{N}$ is a finite $\mathcal{C}$-$\mathcal{D}$-bimodule category by the action determined by
\begin{equation}
  X \ogreaterthan (M \boxtimes N) \olessthan Y
  = ({}^*Y \ogreaterthan M) \boxtimes (X \ogreaterthan N)
\end{equation}
for $X \in \mathcal{C}$, $Y \in \mathcal{D}$, $M \in \mathcal{M}$ and $N \in \mathcal{N}$. $\REX(\mathcal{M}, \mathcal{N})$ is also a finite $\mathcal{C}$-$\mathcal{D}$-bimodule category by the action determined by
\begin{equation}
  \label{eq:C-env-act-rex}
  (X \ogreaterthan F \olessthan Y)(M)
  = X \ogreaterthan F(Y \ogreaterthan M)
\end{equation}
for $X \in \mathcal{C}$, $Y \in \mathcal{D}$, $M \in \mathcal{M}$ and $F \in \REX(\mathcal{M}, \mathcal{N})$. The proof of the following lemma is straightforward.

\begin{lemma}
  The equivalence $\Phi: \mathcal{M}^{\op} \boxtimes \mathcal{N} \to \REX(\mathcal{M}, \mathcal{N})$ given in Lemma~\ref{lem:Deligne-tensor-rex} is an equivalence of $\mathcal{C}$-$\mathcal{D}$-bimodule categories.
\end{lemma}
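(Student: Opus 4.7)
The plan is to exhibit the quasi-inverse $\overline{\Phi}$ of \eqref{eq:Deligne-tensor-rex-1} as a strong $(\mathcal{C} \boxtimes \mathcal{D}^{\rev})$-module functor; pushing the resulting structure through the equivalence $\Phi \dashv \overline{\Phi}$ of Lemma~\ref{lem:Deligne-tensor-rex} will then upgrade $\Phi$ itself to a module equivalence. The end formula for $\overline{\Phi}$ makes this route noticeably cleaner than working with $\Phi$ directly.

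The heart of the argument is a single end manipulation. Fix $X \in \mathcal{C}$, $Y \in \mathcal{D}$ and $F \in \REX(\mathcal{M}, \mathcal{N})$. Since $\mathcal{C}$ and $\mathcal{D}$ are rigid, the functors $X \ogreaterthan (-) \colon \mathcal{N} \to \mathcal{N}$ and ${}^*Y \ogreaterthan (-) \colon \mathcal{M} \to \mathcal{M}$ admit both left and right adjoints (namely $X^* \ogreaterthan (-)$, ${}^*X \ogreaterthan (-)$ and $Y \ogreaterthan (-)$, ${}^{**}Y \ogreaterthan (-)$, respectively), so they are exact; consequently the action of $X \boxtimes Y$ on $\mathcal{M}^{\op} \boxtimes \mathcal{N}$ is exact in each variable and thus commutes with the end defining $\overline{\Phi}$. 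Hence
\begin{equation*}
  (X \boxtimes Y) \ogreaterthan \overline{\Phi}[F]
  \;\cong\; \int_{M \in \mathcal{M}} ({}^*Y \ogreaterthan M) \boxtimes (X \ogreaterthan F(M)).
\end{equation*}
Now reindex the end along the self-equivalence $Y \ogreaterthan (-) \colon \mathcal{M} \to \mathcal{M}$, whose quasi-inverse is ${}^*Y \ogreaterthan (-)$, using the end version of \eqref{eq:coend-1}; substituting $M \mapsto Y \ogreaterthan M$ gives
\begin{equation*}
  \;\cong\; \int_{M \in \mathcal{M}} M \boxtimes (X \ogreaterthan F(Y \ogreaterthan M))
  \;=\; \overline{\Phi}[(X \boxtimes Y) \ogreaterthan F]
\end{equation*}
by the very definition \eqref{eq:C-env-act-rex} of the action on $\REX(\mathcal{M}, \mathcal{N})$. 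This yields a natural isomorphism $\xi^{\overline{\Phi}}_{X \boxtimes Y, F}$ on the generators of $\mathcal{C} \boxtimes \mathcal{D}^{\rev}$; because both sides are right exact in each of $X, Y, F$, the universal property of the Deligne tensor product extends $\xi^{\overline{\Phi}}$ uniquely to all objects of $\mathcal{C} \boxtimes \mathcal{D}^{\rev}$.

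It then remains to verify the associativity and unit coherences of a module functor structure. The unit axiom is immediate. For associativity, the bookkeeping to watch is that the tensor product in $\mathcal{C} \boxtimes \mathcal{D}^{\rev}$ is $(X \boxtimes Y) \otimes (X' \boxtimes Y') = (X \otimes X') \boxtimes (Y' \otimes Y)$, and the canonical isomorphism ${}^*(Y' \otimes Y) \cong {}^*Y \otimes {}^*Y'$ of the duality functor produces exactly the compatibility required when the two reindexing steps are composed; naturality of the end-substitution then closes the diagram. Since $\xi^{\overline{\Phi}}$ is an isomorphism by construction and $\overline{\Phi}$ is already an equivalence of underlying categories, $\overline{\Phi}$ is an equivalence of $(\mathcal{C} \boxtimes \mathcal{D}^{\rev})$-module categories, and hence so is $\Phi$. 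The main obstacle is precisely this last coherence check---tracking the contravariance introduced by ${}^*(-)$ against the reversal in $\mathcal{D}^{\rev}$---but no ideas enter beyond the standard calculus of ends, and a single diagram chase through the universal property of the end suffices.
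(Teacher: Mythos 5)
Your key step fails as justified. You assert that $Y \ogreaterthan (-)\colon \mathcal{M} \to \mathcal{M}$ is a self-equivalence with quasi-inverse ${}^*Y \ogreaterthan (-)$, and you reindex the end along it via ``the end version of \eqref{eq:coend-1}.'' But these two functors are merely adjoint, not mutually inverse: the paper records that ${}^*Y \ogreaterthan (-)$ is a \emph{right adjoint} of $Y \ogreaterthan (-)$, and already for $\mathcal{D} = \mathcal{M} = \mbox{\rm mod-}k$ with $\dim_k Y \geq 2$ the functor $Y \otimes (-)$ is not an equivalence. Reindexing along an equivalence (which is what \eqref{eq:coend-1} encodes) would produce $\int_{M} ({}^*Y \ogreaterthan Y \ogreaterthan M) \boxtimes (X \ogreaterthan F(Y \ogreaterthan M))$, and ${}^*Y \ogreaterthan Y \ogreaterthan M \not\cong M$ for non-invertible $Y$. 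The isomorphism of ends you want, $\int_{M} ({}^*Y \ogreaterthan M) \boxtimes (X \ogreaterthan F(M)) \cong \int_{M} M \boxtimes (X \ogreaterthan F(Y \ogreaterthan M))$, is in fact true, but it rests on a different lemma of the end calculus, the adjunction shift: for an adjunction $F_0 \dashv G_0$ with unit $u$ and counit $e$, and a bifunctor $T$ contravariant in its first variable, one has $\int_{M} T(G_0 M, M) \cong \int_{M} T(M, F_0 M)$, where a wedge $\theta_M \colon c \to T(G_0 M, M)$ is sent to $T(u_M, \id) \circ \theta_{F_0 M}$, the inverse uses $T(\id, e_M) \circ \eta_{G_0 M}$, and the triangle identities plus dinaturality show these are mutually inverse. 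Applied with $F_0 = Y \ogreaterthan (-)$ and $G_0 = {}^*Y \ogreaterthan (-)$ this yields your displayed isomorphism; but this mechanism is genuinely different from equivalence-reindexing, and your proposal neither states nor proves it, so the proof has a hole at its central step. (A smaller point in the same passage: ``exact, hence commutes with the end'' should run through ``exact and $k$-linear between finite abelian categories, hence admits a left adjoint, hence preserves limits''; exactness alone only controls finite limits.)

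Once this is repaired, your route through $\overline{\Phi}$ is viable, but note how the paper sidesteps the issue entirely: it works with $\Phi$ rather than $\overline{\Phi}$, evaluates on pure tensors, and applies the very same adjunction $Y \ogreaterthan (-) \dashv {}^*Y \ogreaterthan (-)$ \emph{inside a Hom-space}, where it is unproblematic. Concretely, $\Phi((X \boxtimes Y) \ogreaterthan (M \boxtimes N))(M') = \Hom_{\mathcal{M}}(M', {}^*Y \ogreaterthan M)^* \bullet (X \ogreaterthan N) \cong X \ogreaterthan \bigl( \Hom_{\mathcal{M}}(Y \ogreaterthan M', M)^* \bullet N \bigr) = ((X \boxtimes Y) \ogreaterthan \Phi(M \boxtimes N))(M')$, and then the isomorphism extends to all of $\mathcal{C} \boxtimes \mathcal{D}^{\rev}$ and $\mathcal{M}^{\op} \boxtimes \mathcal{N}$ by right-exactness in all four variables --- the same extension-by-Deligne-universality step you use, which is fine. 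The paper's computation needs no ends, no limit-preservation argument, and no reindexing, so adopting it would also shrink the coherence bookkeeping you flag at the close of your argument.
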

% \begin{proof}
%   For $X \in \mathcal{C}$, $Y \in \mathcal{D}$, $M, M' \in \mathcal{M}^{\op}$ and $N \in \mathcal{N}$, we have
%   \begin{align*}
%     \Phi(X \ogreaterthan (M \boxtimes N) \olessthan Y)(M')
%     & = \Hom_{\mathcal{M}}(M', {}^*Y \ogreaterthan M)^* \bullet (X \ogreaterthan N) \\
%     & \cong X \ogreaterthan (\Hom_{\mathcal{M}}(Y \ogreaterthan M', M)^* \bullet N) \\
%     & = (X \ogreaterthan \Phi(M \boxtimes N) \olessthan Y)(M').
%   \end{align*}
%   Thus $\Phi$ is a $\mathcal{C}$-$\mathcal{D}$-bimodule functor. Since $\Phi$ is an equivalence between underlying categories, the result follows.
% \end{proof}

The following technical lemma will be used in Section~\ref{sec:braided-Hopf}.

\begin{lemma}
  \label{lem:Rex-F-G}
  Let $G: \mathcal{M}_2 \to \mathcal{M}_1$ and $E: \mathcal{N}_1 \to \mathcal{N}_2$ be $k$-linear right exact module functors between finite module categories over $\mathcal{D}$ and $\mathcal{C}$, respectively. Then
  \begin{equation*}
    \REX(G, E): \REX(\mathcal{M}_1, \mathcal{N}_1) \to \REX(\mathcal{M}_2, \mathcal{N}_2),
    \quad F \mapsto E \circ F \circ G
  \end{equation*}
  is a $k$-linear right exact strong $\mathcal{E}$-module functor.
\end{lemma}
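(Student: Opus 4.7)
The plan is to verify in three stages that $\Psi := \REX(G, E)$ is a $k$-linear, right exact, strong $\mathcal{C} \boxtimes \mathcal{D}^{\rev}$-module functor, with the module structure on $\Psi$ built from those of $E$ and $G$.

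First, $\Psi$ is well defined and $k$-linear: the composite $E \circ F \circ G$ of three $k$-linear right exact functors is again $k$-linear and right exact, so it lies in $\REX(\mathcal{M}_2, \mathcal{N}_2)$. For the right exactness of $\Psi$ itself, I would use that cokernels in $\REX(\mathcal{M}, \mathcal{N})$ are computed pointwise (a pointwise cokernel of right exact functors remains right exact by a routine diagram chase). Given an exact sequence $F_1 \to F_2 \to F_3 \to 0$ in $\REX(\mathcal{M}_1, \mathcal{N}_1)$, evaluation at $G(M_2)$ for any $M_2 \in \mathcal{M}_2$ produces an exact sequence in $\mathcal{N}_1$, and the right exactness of $E$ then yields the required pointwise exactness of $\Psi[F_1] \to \Psi[F_2] \to \Psi[F_3] \to 0$.

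Next I would produce the $\mathcal{C} \boxtimes \mathcal{D}^{\rev}$-module structure. Using the definition~\eqref{eq:C-env-act-rex} of the action, the desired natural isomorphism
\[
  \xi^{\Psi}_{X \boxtimes Y, F}: (X \boxtimes Y) \ogreaterthan \Psi[F] \longrightarrow \Psi[(X \boxtimes Y) \ogreaterthan F]
\]
amounts, upon evaluation at $M_2 \in \mathcal{M}_2$, to a morphism
\[
  X \ogreaterthan E(F(G(Y \ogreaterthan M_2))) \longrightarrow E(X \ogreaterthan F(Y \ogreaterthan G(M_2))).
\]
I would define this as the composite obtained by first applying the module structure $\xi^G_{Y, M_2}: G(Y \ogreaterthan M_2) \xrightarrow{\cong} Y \ogreaterthan G(M_2)$ of $G$ in the innermost slot (wrapped by $X \ogreaterthan E(F(-))$), and then applying the module structure $\xi^E_{X, F(Y \ogreaterthan G(M_2))}: X \ogreaterthan E(-) \xrightarrow{\cong} E(X \ogreaterthan -)$ on the outside. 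Naturality in all arguments is automatic from the naturality of $\xi^E$ and $\xi^G$.

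Finally, since $\mathcal{C}$ and $\mathcal{D}$ are rigid, Lemma~\ref{lem:C-mod-func-rigid} guarantees that $\xi^E$ and $\xi^G$ are invertible, so $\xi^{\Psi}$ is invertible and $\Psi$ is strong. The pentagon and unit axioms for $\xi^{\Psi}$ reduce, pointwise at any $M_2 \in \mathcal{M}_2$, to a juxtaposition of the corresponding axioms for $\xi^E$ and $\xi^G$, together with the unpacking $(X \boxtimes Y) \otimes (X' \boxtimes Y') = (X \otimes X') \boxtimes (Y' \otimes Y)$ of the tensor product on $\mathcal{C} \boxtimes \mathcal{D}^{\rev}$. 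This final coherence verification is the main bookkeeping step, but it is purely formal once the above composite definition of $\xi^{\Psi}$ is in hand.
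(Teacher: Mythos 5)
Your proof is correct, but it takes a genuinely different route from the paper's. The paper argues by transport of structure: since $G$ is $k$-linear and right exact between finite abelian categories, it has a right adjoint $R$ (Lemma~\ref{lem:EW-thm}); then $R^{\op}: \mathcal{M}_1^{\op} \to \mathcal{M}_2^{\op}$ is $k$-linear and right exact, so $H := R^{\op} \boxtimes E$ is manifestly a $k$-linear right exact strong $\mathcal{C} \boxtimes \mathcal{D}^{\rev}$-module functor, and the computation $\Hom_{\mathcal{M}_1}(G(M_2), M_1) \cong \Hom_{\mathcal{M}_2}(M_2, R(M_1))$ shows that $\REX(G,E)$ is conjugate to $H$ under the equivalence $\Phi$ of Lemma~\ref{lem:Deligne-tensor-rex}, whence all three properties transfer at once. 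That route buys right exactness and the module structure for free from the Deligne product and never needs to know how colimits in $\REX(\mathcal{M},\mathcal{N})$ are computed. Your direct verification works, but you should make explicit two routine facts that the paper's conjugation trick sidesteps: (i) cokernels in the abelian category $\REX(\mathcal{M},\mathcal{N})$ are indeed pointwise — this holds because, in the realization of Lemma~\ref{lem:Deligne-tensor-rex}, evaluation at an object corresponds to a right exact functor, and your colimits-commute-with-colimits observation shows the pointwise cokernel of right exact functors is again right exact (note kernels are \emph{not} pointwise there, but your argument only uses cokernels, so this is harmless); and (ii) your $\xi^{\Psi}$ is defined only on pure tensors $X \boxtimes Y$, so it must be extended to arbitrary objects $Z \in \mathcal{C} \boxtimes \mathcal{D}^{\rev}$; since both $Z \mapsto Z \ogreaterthan \Psi[F]$ and $Z \mapsto \Psi[Z \ogreaterthan F]$ are $k$-linear and right exact in $Z$, the universal property of the Deligne tensor product extends the isomorphism — exactly the device the paper itself uses in the proof of Lemma~\ref{lem:C-D-rev-linear}. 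With those two points spelled out, your composite $\xi^E \circ \bigl(X \ogreaterthan E F(\xi^G)\bigr)$ is the correct structure map, rigidity via Lemma~\ref{lem:C-mod-func-rigid} gives invertibility, and the coherence check factors as you describe; your approach is more hands-on and has the advantage of exhibiting the module structure concretely, which the paper's argument leaves implicit.
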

\begin{proof}
  Let $R: \mathcal{M}_1 \to \mathcal{M}_2$ be a right adjoint functor of $G$. Since $R$ is $k$-linear and {\em left} exact as a right adjoint, the functor $R^{\op}: \mathcal{M}_1^{\op} \to \mathcal{M}_2^{\op}$ induced by $R$ is $k$-linear and {\em right} exact. Hence we have a $k$-linear right exact functor
  \begin{equation*}
    H := R^{\op} \boxtimes E: \mathcal{M}_1^{\op} \boxtimes \mathcal{N}_1 \to \mathcal{M}_2^{\op} \boxtimes \mathcal{N}_2.
  \end{equation*}
  The functor $H$ is obviously a strong $\mathcal{E}$-module functor. Now we compute
  \begin{align*}
    \Big( \REX(G, E) \Big[ \Phi(M_1 \boxtimes N_1) \Big] \Big)(M_2)
    & \cong \Hom_{\mathcal{M}_1}(G(M_2), M_1)^* \bullet E(N_1) \\
    & \cong \Hom_{\mathcal{M}_2}(M_2, R(M_1))^* \bullet E(N_1) \\
    & = \Phi(H(M_1 \boxtimes N_1))(M_2)
  \end{align*}
  for $M_1 \in \mathcal{M}_1^{\op}$, $M_2 \in \mathcal{M}_2$ and $N_1 \in \mathcal{N}_1$, where $\Phi$'s are the equivalences given in Lemma~\ref{lem:Deligne-tensor-rex}. Thus $\REX(G, E) \cong \Phi \circ H \circ \overline{\Phi}$. The functor $\REX(G, E)$ has the desired properties since $H$ does.
\end{proof}

\subsection{Monadic description of module functors}
\label{subsec:mona-mod-fun}

Let $\mathcal{C}$ be a finite tensor category over $k$, and set $\mathcal{C}^{\env} = \mathcal{C} \boxtimes \mathcal{C}^{\rev}$. We define $A \in \mathcal{C}^{\env}$ by
\begin{equation*}
  A = \int^{X \in \mathcal{C}} X^* \boxtimes X
\end{equation*}
(see \cite[\S5]{MR1862634} or \cite{2014arXiv1402.3482S} for the existence of this coend). Let $i_X: X^* \boxtimes X \to A$ be the universal dinatural transformation for the coend. By the Fubini theorem for coends, $A \otimes A$ is a coend of $(X_1, X_2, Y_1, Y_2) \mapsto (X_1^* \boxtimes Y_1) \otimes (X_2^* \boxtimes Y_2)$. Thus there exists a unique morphism $m$ such that the diagram
\begin{equation*}
  \xymatrix{
    (X^* \boxtimes X) \otimes (Y^* \boxtimes Y)
    \ar[rrr]^(.6){i_X \otimes i_Y} \ar@{=}[d]
    & & & A \otimes A \ar[d]^{m} \\
    (Y \otimes X)^* \boxtimes (Y \otimes X)
    \ar[rrr]^(.6){i_{Y \otimes X}}
    & & & A
  }
\end{equation*}
commutes for all $X, Y \in \mathcal{C}$. We also define $u: \unitobj \boxtimes \unitobj \to A$ by $u = i_{\unitobj}$. The proof of the following lemma is straightforward:

\begin{lemma}
  \label{lem:coend-algebra}
  The triple $(A, m, u)$ is an algebra in $\mathcal{C}^{\env}$.
\end{lemma}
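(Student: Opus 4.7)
The plan is to verify the associativity and unitality axioms of a monoid directly, using the universal property of the coend $A$ together with Fubini's theorem for iterated coends.

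First, I would set up the relevant Fubini identifications. Since the Deligne product $\boxtimes$ is right exact in each variable and the tensor product in $\mathcal{C}^{\env}$ is too, Fubini gives that $A \otimes A$ is a coend of $(X, Y) \mapsto (X^* \boxtimes X) \otimes (Y^* \boxtimes Y)$ with universal dinatural $i_X \otimes i_Y$, and analogously $A \otimes A \otimes A$ is a coend of $(X, Y, Z) \mapsto (X^* \boxtimes X) \otimes (Y^* \boxtimes Y) \otimes (Z^* \boxtimes Z)$ with universal $i_X \otimes i_Y \otimes i_Z$. Using the strict identifications $X \otimes^{\rev} Y = Y \otimes X$ in $\mathcal{C}^{\rev}$ and $X^* \otimes Y^* = (Y \otimes X)^*$ coming from strictness of the left duality, these components collapse to $(Y \otimes X)^* \boxtimes (Y \otimes X)$ and $(Z \otimes Y \otimes X)^* \boxtimes (Z \otimes Y \otimes X)$, respectively. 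This is precisely the identification used to define $m$ in the statement.

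For associativity, I would show that the two candidates $m \circ (m \otimes \id_A)$ and $m \circ (\id_A \otimes m)$ from $A^{\otimes 3}$ to $A$ agree after precomposing with the universal dinatural $i_X \otimes i_Y \otimes i_Z$. By the defining diagram for $m$, the first composite equals $i_{Z \otimes (Y \otimes X)}$ while the second equals $i_{(Z \otimes Y) \otimes X}$; by strictness of $\mathcal{C}$, these coincide. Since $A \otimes A \otimes A$ is a coend, the universal property forces $m \circ (m \otimes \id_A) = m \circ (\id_A \otimes m)$.

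For unitality, I would precompose $m \circ (u \otimes \id_A)$ with $i_X: X^* \boxtimes X \to A$, regarding $X^* \boxtimes X$ strictly as $(\unitobj \boxtimes \unitobj) \otimes (X^* \boxtimes X)$. The defining diagram gives $m \circ (i_\unitobj \otimes i_X) = i_{X \otimes \unitobj} = i_X$, which also equals $\id_A \circ i_X$, so the universal property of $A$ yields $m \circ (u \otimes \id_A) = \id_A$; the right-unit axiom is symmetric, using $i_{\unitobj \otimes X} = i_X$.

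The main obstacle is bookkeeping rather than substance: one must carefully track the reversed tensor product in $\mathcal{C}^{\rev}$ and the contravariance of $(-)^*$ so that the various identifications with $(Y \otimes X)^* \boxtimes (Y \otimes X)$ are consistent and compatible with the strict monoidal structure of $\mathcal{C}^{\env}$. Once these identifications are made cleanly, the verification reduces to comparing labels on universal dinatural transformations.
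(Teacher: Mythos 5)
Your proof is correct and is exactly the ``straightforward'' verification the paper has in mind (the paper omits the proof of Lemma~\ref{lem:coend-algebra} entirely, asserting it is straightforward): the Fubini identification of $A^{\otimes 2}$ and $A^{\otimes 3}$ as coends with universal dinatural families $i_X \otimes i_Y$ and $i_X \otimes i_Y \otimes i_Z$ reduces associativity to $i_{Z \otimes (Y \otimes X)} = i_{(Z \otimes Y) \otimes X}$ and unitality to $i_{X \otimes \unitobj} = i_X = i_{\unitobj \otimes X}$, after which the universal property (joint epimorphicity of the universal dinatural components) forces the algebra axioms. Your label computations match the paper's convention $m \circ (i_X \otimes i_Y) = i_{Y \otimes X}$, and the one point worth making slightly more precise is that Fubini applies here because the tensor product of $\mathcal{C}^{\env}$, being $k$-linear and right exact in each variable between finite abelian categories, has adjoints by Lemma~\ref{lem:EW-thm} and hence preserves the relevant coends --- a fact the paper already invokes in defining $m$.
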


Let $\mathcal{M}$ and $\mathcal{N}$ be finite left $\mathcal{C}$-module categories. As we have seen, $\REX(\mathcal{M}, \mathcal{N})$ is a left $\mathcal{C}^{\env}$-module category by the action given by \eqref{eq:C-env-act-rex}. We now consider the category of left $A$-modules in $\REX(\mathcal{M}, \mathcal{N})$ in the sense of Definition~\ref{def:modules}.

\begin{lemma}
  \label{lem:Rex-C-as-modules}
  ${}_A \REX(\mathcal{M}, \mathcal{N}) \cong \REX_{\mathcal{C}}(\mathcal{M}, \mathcal{N})$.
\end{lemma}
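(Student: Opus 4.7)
The plan is to unpack the definition of an $A$-module in $\REX(\mathcal{M},\mathcal{N})$ and to identify the resulting data with a (strong) $\mathcal{C}$-module functor structure by transposing along the duality adjunction $X^* \ogreaterthan - \dashv X \ogreaterthan -$ in $\mathcal{N}$. First, the action of $\mathcal{C}^{\env}$ on $\REX(\mathcal{M},\mathcal{N})$ defined by \eqref{eq:C-env-act-rex} is $k$-linear and right exact in the first variable, and the coend presenting $A$ is a cokernel of a map between finite direct sums; hence the action commutes with this coend. Thus, for each $F \in \REX(\mathcal{M},\mathcal{N})$ and $M \in \mathcal{M}$,
\[
(A \ogreaterthan F)(M) \;\cong\; \int^{X \in \mathcal{C}} X^* \ogreaterthan F(X \ogreaterthan M),
\]
so by the universal property of coends a morphism $\rho \colon A \ogreaterthan F \to F$ in $\REX(\mathcal{M},\mathcal{N})$ is the same as a family $\rho_{X,M} \colon X^* \ogreaterthan F(X \ogreaterthan M) \to F(M)$ natural in $M$ and dinatural in $X$.

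Next, transposing $\rho_{X,M}$ along $X^* \ogreaterthan - \dashv X \ogreaterthan -$ yields a morphism $\xi_{X,M} \colon F(X \ogreaterthan M) \to X \ogreaterthan F(M)$; the duality $(-)^* \colon \mathcal{C} \to \mathcal{C}^{\op,\rev}$ converts dinaturality of $\rho$ in $X$ into ordinary naturality of $\xi$ in $X$. One then checks that the two module axioms for $\rho$ translate into the coherence axioms for a colax $\mathcal{C}$-module functor: the unit axiom, using $u = i_{\unitobj}$, transposes to $\xi_{\unitobj, M} = \id_{F(M)}$, while tracing the multiplication $m$ through the defining commutative square, together with the canonical identifications $(Y \otimes X)^* \cong X^* \otimes Y^*$ and $(Y \otimes X) \ogreaterthan M \cong Y \ogreaterthan (X \ogreaterthan M)$, converts $\rho \circ (m \ogreaterthan \id_F) = \rho \circ (\id_A \ogreaterthan \rho)$ into the hexagon $\xi_{Y \otimes X, M} = (\id_Y \ogreaterthan \xi_{X,M}) \circ \xi_{Y, X \ogreaterthan M}$. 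Since $\mathcal{C}$ is rigid, the dual version of Lemma~\ref{lem:C-mod-func-rigid} forces $\xi$ to be invertible, so $F$ acquires the structure of a strong $\mathcal{C}$-module functor, i.e.\ an object of $\REX_{\mathcal{C}}(\mathcal{M},\mathcal{N})$.

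Finally, a morphism $f \colon (F, \rho^F) \to (G, \rho^G)$ of $A$-modules is a natural transformation satisfying $f \circ \rho^F = \rho^G \circ (A \ogreaterthan f)$; transposing this equation component-wise along the same adjunction yields $(\id_X \ogreaterthan f_M) \circ \xi^F_{X,M} = \xi^G_{X,M} \circ f_{X \ogreaterthan M}$, which is exactly the definition of a morphism in $\REX_{\mathcal{C}}(\mathcal{M},\mathcal{N})$. Since the transposition is a bijection, the two functors $(F,\rho) \mapsto (F,\xi)$ and $(F,\xi) \mapsto (F,\rho)$ are mutually inverse, giving the claimed equivalence of categories. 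The main technical obstacle is the associativity check: one has to carefully track how the multiplication $m$, defined through $i_{Y \otimes X}$ and the canonical rigidity iso $X^* \otimes Y^* \cong (Y \otimes X)^*$, interacts with the two nested adjunctions $X^* \ogreaterthan -$ and $Y^* \ogreaterthan -$, and verify that the resulting composite is $(\id_Y \ogreaterthan \xi_{X,M}) \circ \xi_{Y, X \ogreaterthan M}$ with the tensor order matching the reversed tensor product on the $\mathcal{C}^{\rev}$-factor of $\mathcal{C}^{\env}$.
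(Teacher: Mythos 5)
Your proposal is correct and takes essentially the same route as the paper's proof, which likewise computes $(A \ogreaterthan F)(M) \cong \int^{X} X^* \ogreaterthan F(X \ogreaterthan M)$, transposes the action $\rho$ along the adjunction $X^* \ogreaterthan (-) \dashv X \ogreaterthan (-)$ into a colax structure $\xi$ (following Day--Street's argument for the center), and invokes Lemma~\ref{lem:C-mod-func-rigid} to upgrade colax to strong. Your additional details---the cokernel presentation justifying that the right exact action preserves the coend, the explicit unit/associativity translation, and the check on morphisms---are exactly the steps the paper leaves implicit.
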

\begin{proof}
  Day and Street \cite{MR2342829} showed that the functor $Z(V) = \int^{X \in \mathcal{C}} X^* \otimes V \otimes X$ ($V \in \mathcal{C}$) has a structure of a monad such that the category of $Z$-modules can be identified with the monoidal center of $\mathcal{C}$. The proof is essentially same as their proof of this fact: For $F \in \REX(\mathcal{M}, \mathcal{N})$, we have
  \begin{align*}
    \NAT(A \ogreaterthan F, F)
    & \textstyle \cong \int_{M \in \mathcal{M}, X \in \mathcal{C}} \Hom_{\mathcal{N}}(X^* \ogreaterthan F(X \ogreaterthan M), F(M)) \\
    & \textstyle \cong \int_{M \in \mathcal{M}, X \in \mathcal{C}} \Hom_{\mathcal{N}}(F(X \ogreaterthan M), X \ogreaterthan F(M))
  \end{align*}
  by the standard properties of ends and coends. Hence a morphism $\mu: A \ogreaterthan F \to F$ in $\REX(\mathcal{M}, \mathcal{N})$ is the same thing as a natural transformation
  \begin{equation*}
    \xi_{M,X}: F(X \ogreaterthan M) \to X \ogreaterthan F(M)
    \quad (M \in \mathcal{M}, X \in \mathcal{C}).
  \end{equation*}
  The morphism $\mu$ makes $F$ an $A$-module in $\REX(\mathcal{M}, \mathcal{N})$ if and only if the corresponding natural transformation $\xi$ makes $F$ a colax $\mathcal{C}$-module functor. Hence, by Lemma~\ref{lem:C-mod-func-rigid}, we obtain a bijection between the objects of the two categories, which gives rise to an isomorphism of the two categories.
\end{proof}

We write $\REX(\mathcal{C}, \mathcal{C})$ as $\REX(\mathcal{C})$ for short. Lemma~\ref{lem:Deligne-tensor-rex} yields an equivalence
\begin{equation}
  \label{eq:equiv-Phi-1}
  \Phi_{\mathcal{C}}: \mathcal{C}^{\env} \to \REX(\mathcal{C}),
  \quad V \boxtimes W \mapsto \Hom_{\mathcal{C}}(-, {}^*W)^* \bullet V
  \quad (V, W \in \mathcal{C})
\end{equation}
of left $\mathcal{C}^{\env}$-module categories with quasi-inverse
\begin{equation}
  \label{eq:equiv-Phi-inv}
  \overline{\Phi}_{\mathcal{C}}: \REX(\mathcal{C}) \to \mathcal{C}^{\env},
  \quad F \mapsto \int_{X \in \mathcal{C}} F(X) \boxtimes X^*.
\end{equation}

\begin{lemma}
  \label{lem:Hopf-bimod-FTC-rex}
  The following functor is an equivalence of categories:
  \begin{equation}
    \label{eq:Hopf-bimod-FTC-rex}
    \mathfrak{H}_{\mathcal{C}}: {}_A(\mathcal{C}^{\env}) \to \mathcal{C},
    \quad M \mapsto \Phi_{\mathcal{C}}(M)(\unitobj).
  \end{equation}
\end{lemma}
\begin{proof}
  $\Phi_{\mathcal{C}}$ induces an equivalence between the categories of $A$-modules in $\mathcal{C}^{\env}$ and those in $\REX(\mathcal{C})$. The functor $\mathfrak{H}_{\mathcal{C}}$ is an equivalence, since it is the composition
  \begin{equation*}
    {}_A (\mathcal{C}^{\env})
    \xrightarrow[\quad \Phi_{\mathcal{C}} \quad]{\approx}
    {}_A \REX(\mathcal{C})
    \xrightarrow[\quad \text{Lemma~\ref{lem:Rex-C-as-modules}} \quad]{\cong}
    \REX_{\mathcal{C}}(\mathcal{C})
    \xrightarrow[\qquad]{\approx}
    \mathcal{C},
  \end{equation*}
  where the last arrow refers to the functor $F \mapsto F(\unitobj)$.
\end{proof}

\begin{remark}
  \label{rem:Hopf-bimod-FTC-lex}
  A similar equivalence can be obtained by using {\em left} exact functors instead of right exact ones.
  The Deligne tensor product of finite abelian categories $\mathcal{A}$ and $\mathcal{B}$ over $k$ also has the universal property for functors out of $\mathcal{A} \times \mathcal{B}$ that is $k$-linear and {\em left} exact in each variable \cite[\S5]{MR1106898}. Using this universal property, we define a $k$-linear {\em left} exact functor
  \begin{equation}
    \label{eq:equiv-Phi'-1}
    \Phi'_{\mathcal{C}}: \mathcal{C}^{\env} \to \LEX(\mathcal{C}),
    \quad V \boxtimes W \mapsto \Hom_{\mathcal{C}}(W^*, -) \bullet V,
  \end{equation}
  where $\LEX(\mathcal{C})$ is the category of $k$-linear {\em left} exact endofunctors on $\mathcal{C}$. As explained in \cite{2014arXiv1402.3482S}, the functor $\Phi'_{\mathcal{C}}$ is in fact an equivalence with quasi-inverse
  \begin{equation}
    \label{eq:equiv-Phi'-2}
    \overline{\Phi}{}'_{\mathcal{C}}: \LEX(\mathcal{C}) \to \mathcal{C}^{\env},
    \quad F \mapsto \int^{X \in \mathcal{C}} F(X^*) \boxtimes X.
  \end{equation}
  The monoidal category $\mathcal{C}^{\env}$ acts on $\LEX(\mathcal{C})$ from the left in such a way that $\Phi_{\mathcal{C}}'$ is an equivalence of left $\mathcal{C}^{\env}$-module categories. By the same argument as Lemma~\ref{lem:Rex-C-as-modules}, we can identify ${}_A \LEX(\mathcal{C})$ with the category $\LEX_{\mathcal{C}}(\mathcal{C})$ of $k$-linear left exact left $\mathcal{C}$-module functors on $\mathcal{C}$. The functor
  \begin{equation}
    \mathfrak{H}'_{\mathcal{C}}: {}_A(\mathcal{C}^{\env}) \to \mathcal{C},
    \quad M \mapsto \Phi'_{\mathcal{C}}(M)(\unitobj)
  \end{equation}
  is an equivalence of categories since it is obtained as the composition
  \begin{equation}
    \label{eq:pf-Hopf-bimod-FTC-lex-1}
    {}_{A}(\mathcal{C}^{\env})
    \xrightarrow{\quad \Phi'_{\mathcal{C}} \quad}
    {}_A \LEX(\mathcal{C})
    \xrightarrow{\quad \cong \quad}
    \LEX_{\mathcal{C}}(\mathcal{C})
    \xrightarrow{\quad \approx \quad}
    \mathcal{C}.
  \end{equation}
  By~\eqref{eq:equiv-Phi'-2} and \eqref{eq:pf-Hopf-bimod-FTC-lex-1}, a quasi-inverse of $\mathfrak{H}'_{\mathcal{C}}$ is given by
  \begin{equation}
    \overline{\mathfrak{H}}{}'_{\mathcal{C}}:
    \mathcal{C} \to {}_A (\mathcal{C}^{\env}),
    \quad V \mapsto A \otimes (V \boxtimes \unitobj).
  \end{equation}
\end{remark}

\begin{remark}
  \label{rem:comparison-ENO}
  Suppose that $\mathcal{C}^{\env}$ is rigid. Then $\mathcal{C}^{\env}$ is a finite tensor category acting on $\mathcal{C}$ from the left by $(X \boxtimes Y) \ogreaterthan V = X \otimes V \otimes Y$. Let $\iHom$ denote the associated internal Hom functor. As shown in \cite{2014arXiv1402.3482S}, the algebra $\iHom(\unitobj, \unitobj)$ is canonically isomorphic to the algebra $A$ of Lemma~\ref{lem:coend-algebra}. Hence, by the result recalled in \S\ref{subsec:fin-mod-cat}, we obtain an equivalence
  \begin{equation}
    \label{eq:Hopf-bimod-ENO}
    \mathfrak{K}: \mathcal{C} \to (\mathcal{C}^{\env})_A,
    \quad V \mapsto \iHom(\unitobj, V) \cong (V \boxtimes \unitobj) \otimes A
  \end{equation}
  of left $\mathcal{C}^{\env}$-module categories \cite[Proposition 2.3]{MR2097289}. The equivalences $\mathfrak{K}$, as well as $\mathfrak{H}_{\mathcal{C}}$ and $\mathfrak{H}_{\mathcal{C}}'$, can be thought of as category-theoretical variants of the fundamental theorem for Hopf bimodules. There is the following relation:
  \begin{equation}
    \label{eq:comparison-ENO}
    \overline{\mathfrak{H}}_{\mathcal{C}}(V) \cong {}^* \! \mathfrak{K}(V^*)
    \quad (V \in \mathcal{C}),
  \end{equation}
  where $\overline{\mathfrak{H}}_{\mathcal{C}}$ is a quasi-inverse of $\mathfrak{H}_{\mathcal{C}}$.
  Indeed, by \eqref{eq:coend-2} and~\eqref{eq:equiv-Phi-inv}, we have
  \begin{equation*}
    {}^*A \otimes (V \boxtimes \unitobj)
    \cong \int_X {}^*(X^* \boxtimes X) \otimes (V \boxtimes \unitobj)
    \cong \int_X (X \otimes V) \boxtimes X^*
    \cong \overline{\Phi}_{\mathcal{C}} \Big[ (-) \otimes V \Big].
  \end{equation*}
  Hence $\mathfrak{H}_{\mathcal{C}}({}^* \! \mathfrak{K}(V^*)) \cong \mathfrak{H}_{\mathcal{C}}({}^* \! A \otimes (V \boxtimes \unitobj)) \cong \unitobj \otimes V = V$ for all $V \in \mathcal{C}$.
\end{remark}

\subsection{The modular object}
\label{subsec:modular-obj}

Let $\mathcal{C}$ be a finite tensor category over $k$. We consider the (right) {\em Cayley functor} defined by
\begin{equation}
  \label{eq:mod-obj-Cayley}
  \Upsilon_{\mathcal{C}}: \mathcal{C} \to \REX(\mathcal{C}),
  \quad X \mapsto (-) \otimes X
  \quad (X \in \mathcal{C}).
\end{equation}
If we identify $\mathcal{C}$ with $\REX_{\mathcal{C}}(\mathcal{C})$, then $\Upsilon_{\mathcal{C}}$ corresponds to the forgetful functor from $\REX_{\mathcal{C}}(\mathcal{C})$ and thus it has a left adjoint, say $\Upsilon_{\mathcal{C}}^*$ (see also \eqref{eq:Cayley-and-Phi} below).

\begin{definition}
  \label{def:mod-obj}
  The {\em modular object} $\alpha_{\mathcal{C}}^{} \in \mathcal{C}$ is defined to be the image of
  \begin{equation}
    \label{eq:mod-obj-triv-bimod}
    J_{\mathcal{C}} := \Hom_{\mathcal{C}}(-, \unitobj)^* \bullet \unitobj \in \REX(\mathcal{C})
  \end{equation}
  under a left adjoint of the Cayley functor. Namely,
  \begin{equation}
    \label{eq:mod-obj-def}
    \alpha_{\mathcal{C}}^{} := \Upsilon^*_{\mathcal{C}}[J^{}_{\mathcal{C}}].
  \end{equation}
\end{definition}

If $\mathcal{C}$ is the representation category of a finite-dimensional Hopf algebra $H$, then the functor $J_{\mathcal{C}}$ is given by tensoring the trivial $H$-bimodule. The functor $\Upsilon_{\mathcal{C}}^*$ can be described by the fundamental theorem for Hopf bimodules. Consequently, the modular object corresponds to the modular function on $H$. The detail will be discussed in Section~\ref{sec:braided-Hopf} in a more general setting.

To study the properties of the modular object, we clarify relations between the Cayley functor and other functors appeared in the previous subsection. Let $\Phi_{\mathcal{C}}$ be the equivalence given by~\eqref{eq:equiv-Phi-1}. Then the diagram
\begin{equation}
  \label{eq:Cayley-and-Phi}
  \xymatrix{
    {}_A (\mathcal{C}^{\env})
    \ar[d]
    \ar[rr]^{\Phi_{\mathcal{C}}}
    & & {}_A \REX(\mathcal{C})
    \ar[d]
    \ar[rr]^{\cong}_{\text{Lemma~\ref{lem:Rex-C-as-modules}}}
    & & \REX_{\mathcal{C}}(\mathcal{C})
    \ar[d]
    \ar[r]^(.6){\approx}
    & \mathcal{C}
    \ar[d]^{\Upsilon_{\mathcal{C}}} \\
    \mathcal{C}^{\env}
    \ar[rr]^{\Phi_{\mathcal{C}}}
    & & \REX(\mathcal{C})
    \ar@{=}[rr]
    & & \REX(\mathcal{C}) \ar@{=}[r]
    & \REX(\mathcal{C})
  }
\end{equation}
commutes up to isomorphisms, where $A$ is the algebra introduced in Lemma~\ref{lem:coend-algebra} and the unlabeled vertical arrows are the forgetful functors. By considering left adjoints of functors in this diagram, we have
\begin{equation}
  \label{eq:Cayley-left-adj-F}
  \Upsilon_{\mathcal{C}}^*[F]
  = (A \ogreaterthan F)(\unitobj)
  = \int^{X \in \mathcal{C}} X^* \otimes F(X)
  \overset{\text{\eqref{eq:coend-1}}}{=} \int^{X \in \mathcal{C}} X \otimes F({}^*X)
\end{equation}
for $F \in \REX(\mathcal{C})$. Since the composition along the top row of the diagram is the equivalence $\mathfrak{H}_{\mathcal{C}}$ given in Lemma~\ref{lem:Hopf-bimod-FTC-rex}, we also have
\begin{equation}
  \label{eq:Cayley-left-adj-M}
  \Upsilon_{\mathcal{C}}^* \big[ \Phi_{\mathcal{C}}(M) \big] \cong \mathfrak{H}_{\mathcal{C}}(A \otimes M)
  \quad (M \in \mathcal{C}^{\env}).
\end{equation}
Hence, in particular,
\begin{equation}
  \label{eq:Cayley-left-adj-2}
  \alpha_{\mathcal{C}}
  = \Upsilon_{\mathcal{C}}^* \big[ J_{\mathcal{C}} \big]
  = \Upsilon_{\mathcal{C}}^* \big[ \Phi_{\mathcal{C}}(\unitobj \boxtimes \unitobj) \big] 
  \cong \mathfrak{H}_{\mathcal{C}}(A).
\end{equation}

For a while, we suppose that $\mathcal{C}^{\env}$ is rigid. Let $\mathfrak{K}: \mathcal{C} \to (\mathcal{C}^{\env})_A$ be the equivalence given by~\eqref{eq:Hopf-bimod-ENO}. Then the distinguished invertible object \cite{MR2097289} is defined as the unique (up to isomorphism) object $D \in \mathcal{C}$ such that $\mathfrak{K}(D) \cong A^*$.

\begin{proposition}
  \label{prop:mod-obj-D}
  Under the above assumption, the modular object of $\mathcal{C}$ is isomorphic to the dual of the distinguished invertible object of $\mathcal{C}$.
\end{proposition}
\begin{proof}
  By \eqref{eq:comparison-ENO} and~\eqref{eq:Cayley-left-adj-2}, we compute ${}^* \! D \cong \mathfrak{H}_{\mathcal{C}}({}^* \! \mathfrak{K}(D)) \cong \mathfrak{H}_{\mathcal{C}}(A) \cong \alpha_{\mathcal{C}}$.
\end{proof}

As its name suggests, the distinguished invertible object $D \in \mathcal{C}$ is an invertible object; see \cite{MR2097289} and \cite{2014arXiv1402.3482S}. We remark that the rigidity of $\mathcal{C}^{\env}$ is essentially used in these papers. On the other hand, Definition~\ref{def:mod-obj} makes sense even in the case where $\mathcal{C}^{\env}$ is not rigid.

We now go back to the general situation and prove the invertibility of $\alpha_{\mathcal{C}}$ without assuming the rigidity of $\mathcal{C}^{\env}$.
Our proof uses not only $\mathfrak{H}_{\mathcal{C}}$ but also the equivalence $\mathfrak{H}'_{\mathcal{C}}$ given in Remark~\ref{rem:Hopf-bimod-FTC-lex}.
As before, we denote its quasi-inverse by $\overline{\mathfrak{H}}{}'_{\mathcal{C}}$.

\begin{proposition}
  \label{prop:mod-obj-invert}
  The modular object $\alpha_{\mathcal{C}}$ is an invertible object.
\end{proposition}
\begin{proof}
  By \eqref{eq:Cayley-left-adj-F} and \eqref{eq:Cayley-left-adj-M}, we compute
  \begin{align*}
    \mathfrak{H}_{\mathcal{C}}^{} \overline{\mathfrak{H}}{}'_{\mathcal{C}}(V)
    = \mathfrak{H}_{\mathcal{C}}^{}(A \otimes (V \boxtimes \unitobj))
    \cong \Upsilon_{\mathcal{C}}^* \big[\Phi_{\mathcal{C}}^{}(V \boxtimes \unitobj) \big]
    \cong \alpha_{\mathcal{C}} \otimes V
  \end{align*}
  for all $V \in \mathcal{C}$. Thus the functor $V \mapsto \alpha_{\mathcal{C}} \otimes V$ is an equivalence as the composition of equivalences. This means that $\alpha_{\mathcal{C}}$ is an invertible object.
\end{proof}

We also note the following formula for the modular object:

\begin{proposition}
  \label{prop:mod-obj-coend-formula}
  $\displaystyle \alpha_{\mathcal{C}}^{} = \int^{X \in \mathcal{C}} \Hom_{\mathcal{C}}(\unitobj, X)^* \bullet X$.
\end{proposition}

This formula directly follows from \eqref{eq:Cayley-left-adj-F} and the definition of $\alpha_{\mathcal{C}}$.
Since it does not involve any information about the tensor product of $\mathcal{C}$, we have:

\begin{corollary}
  \label{cor:mod-obj-invariance}
  Let $F: \mathcal{C} \to \mathcal{D}$ be a $k$-linear functor between finite tensor categories $\mathcal{C}$ and $\mathcal{D}$ over $k$ that is an equivalence between underlying categories and satisfies $F(\unitobj) \cong \unitobj$. Then we have $F(\alpha_{\mathcal{C}}) \cong \alpha_{\mathcal{D}}$.
\end{corollary}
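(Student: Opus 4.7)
My plan is to deduce the corollary almost directly from the coend formula
\begin{equation*}
  \alpha_{\mathcal{C}}^{} \cong \int^{X \in \mathcal{C}} \Hom_{\mathcal{C}}(\unitobj, X)^* \bullet X
\end{equation*}
established in the preceding theorem, by simply applying $F$ to it and checking that each piece of the integrand transports along $F$.

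First I would note three facts about $F$. Since $F$ is a $k$-linear equivalence of finite abelian categories, it is exact and cocontinuous, and in particular it preserves coends (these may be realized as ordinary colimits of a standard twisted-arrow diagram, which $F$ then preserves). Next, by Example~\ref{ex:copower}, every $k$-linear functor between finite abelian categories is automatically a $\mathcal{V}$-module functor, so there is a natural isomorphism $F(V \bullet X) \cong V \bullet F(X)$ for $V \in \mathcal{V}$ and $X \in \mathcal{C}$. Finally, fully faithfulness of $F$ together with $F(\unitobj) \cong \unitobj$ gives a natural isomorphism $\Hom_{\mathcal{C}}(\unitobj, X) \cong \Hom_{\mathcal{D}}(\unitobj, F(X))$. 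Putting these three facts together yields
\begin{equation*}
  F(\alpha_{\mathcal{C}}^{}) \cong \int^{X \in \mathcal{C}} \Hom_{\mathcal{D}}(\unitobj, F(X))^* \bullet F(X).
\end{equation*}

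The last step is a change of variables $Y = F(X)$ in the coend. Because $F: \mathcal{C} \to \mathcal{D}$ is an equivalence, for any functor $T: \mathcal{D}^{\op} \times \mathcal{D} \to \mathcal{A}$ the coend of $T \circ (F^{\op} \times F)$ over $\mathcal{C}$ and the coend of $T$ over $\mathcal{D}$ coexist and are canonically isomorphic, since both satisfy the same universal property (an equivalence of categories induces a bijection of dinatural families). Applied to $T(Y', Y) = \Hom_{\mathcal{D}}(\unitobj, Y)^* \bullet Y$ (which does not depend on the contravariant argument, hence its dinaturality condition is trivial to check under the change of variables), this gives
\begin{equation*}
  \int^{X \in \mathcal{C}} \Hom_{\mathcal{D}}(\unitobj, F(X))^* \bullet F(X)
  \cong \int^{Y \in \mathcal{D}} \Hom_{\mathcal{D}}(\unitobj, Y)^* \bullet Y
  \cong \alpha_{\mathcal{D}}^{},
\end{equation*}
where the second isomorphism is the preceding theorem applied to $\mathcal{D}$.

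There is no serious obstacle: the only potentially subtle point is the change-of-variable step, but it is a general categorical fact that coends are invariant under pullback along equivalences of the indexing category. All the other ingredients (preservation of coends, of copowers, and of hom-spaces from the unit) are formal consequences of $F$ being a $k$-linear equivalence sending $\unitobj$ to $\unitobj$.
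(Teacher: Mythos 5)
Your proof is correct and is essentially the paper's own argument made explicit: the paper deduces the corollary from the coend formula with the one-line remark that the formula does not involve the tensor product, and your three preservation facts (coends, copowers, and $\Hom$ from the unit) together with the change-of-variables step are exactly what that remark compresses. One harmless slip: the integrand is genuinely contravariant in its first variable, namely $T(Y',Y)=\Hom_{\mathcal{D}}(\unitobj,Y')^{*}\bullet Y$, so your parenthetical claim that $T$ ignores the contravariant argument is inaccurate, but since the invariance of coends under an equivalence of the indexing category holds for arbitrary bifunctors (as you state), nothing in the argument breaks.
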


This corollary may be useful to find the modular object. For example:

\begin{corollary}
  For a finite tensor category $\mathcal{C}$ over $k$, we have
  \begin{equation*}
    \alpha_{\mathcal{C}^{\rev}}^{} \cong \alpha_{\mathcal{C}}^{}
    \quad \text{and} \quad
    \alpha_{\mathcal{C}^{\op}}^{} \cong \alpha_{\mathcal{C}}^{*}.
  \end{equation*}
\end{corollary}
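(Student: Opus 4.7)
The plan is to derive both isomorphisms as immediate applications of Corollary~\ref{cor:mod-obj-invariance}, by exhibiting suitable $k$-linear equivalences of finite tensor categories that send the unit object to the unit object.

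First I would handle $\alpha_{\mathcal{C}^{\rev}} \cong \alpha_{\mathcal{C}}$. The key observation is that $\mathcal{C}^{\rev}$ has exactly the same underlying abelian category, $k$-linear structure, and unit object as $\mathcal{C}$; only the tensor product is reversed. Hence the identity functor $\id: \mathcal{C} \to \mathcal{C}^{\rev}$ is a $k$-linear equivalence between finite tensor categories satisfying $\id(\unitobj) = \unitobj$. Applying Corollary~\ref{cor:mod-obj-invariance} immediately yields $\alpha_{\mathcal{C}} \cong \alpha_{\mathcal{C}^{\rev}}$. This is not surprising given the formula for $\alpha_{\mathcal{C}}$ established just above Corollary~\ref{cor:mod-obj-invariance}: the expression $\int^{X} \Hom_{\mathcal{C}}(\unitobj, X)^* \bullet X$ makes no reference to the tensor product.

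For $\alpha_{\mathcal{C}^{\op}} \cong \alpha_{\mathcal{C}}^*$, I would use the left duality functor, which by assumption is an equivalence of monoidal categories $(-)^*: \mathcal{C} \to \mathcal{C}^{\op,\rev}$ with $\unitobj^* = \unitobj$. In particular, it is a $k$-linear equivalence of finite tensor categories preserving the unit, so Corollary~\ref{cor:mod-obj-invariance} gives $\alpha_{\mathcal{C}^{\op,\rev}} \cong (\alpha_{\mathcal{C}})^*$. Combining this with the first part applied to the finite tensor category $\mathcal{C}^{\op}$ (which yields $\alpha_{\mathcal{C}^{\op}} \cong \alpha_{(\mathcal{C}^{\op})^{\rev}} = \alpha_{\mathcal{C}^{\op,\rev}}$) produces the chain $\alpha_{\mathcal{C}^{\op}} \cong \alpha_{\mathcal{C}^{\op,\rev}} \cong \alpha_{\mathcal{C}}^{*}$, which is the desired identification.

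There is really no obstacle to speak of: the whole argument is bookkeeping around which categories qualify as finite tensor categories and which functors are $k$-linear equivalences fixing the unit. The one point worth double-checking is that $\mathcal{C}^{\op}$ is a finite tensor category over $k$ when $\mathcal{C}$ is, so that Corollary~\ref{cor:mod-obj-invariance} applies with $\mathcal{C}^{\op}$ in place of $\mathcal{C}$; this follows because rigidity is preserved under $(-)^{\op}$ (left and right duals swap roles) and because the opposite of a finite $k$-linear abelian category is again finite $k$-linear abelian (concretely, $(\mbox{\rm mod-}A)^{\op} \simeq \mbox{\rm mod-}A^{\op}$ via the $k$-linear duality $\Hom_k(-,k)$ on finite-dimensional modules).
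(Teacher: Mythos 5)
Your proof is correct and follows essentially the same route as the paper, which simply applies Corollary~\ref{cor:mod-obj-invariance} to $\id_{\mathcal{C}}: \mathcal{C} \to \mathcal{C}^{\rev}$ and to $(-)^*: \mathcal{C} \to \mathcal{C}^{\op}$. Your detour through $\mathcal{C}^{\op,\rev}$ is harmless but unnecessary: since Corollary~\ref{cor:mod-obj-invariance} only requires a $k$-linear equivalence preserving the unit (not a monoidal one), the duality functor can be applied with target $\mathcal{C}^{\op}$ directly, as $\mathcal{C}^{\op}$ and $\mathcal{C}^{\op,\rev}$ share the same underlying category and unit object.
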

\begin{proof}
  Apply Corollary~\ref{cor:mod-obj-invariance} to $\id_{\mathcal{C}}: \mathcal{C} \to \mathcal{C}^{\rev}$ and $(-)^*: \mathcal{C} \to \mathcal{C}^{\op}$.
\end{proof}

\subsection{The Radford $S^4$-formula}

Let $\mathcal{C}$ be a finite tensor category over $k$. Here we prove the following analogue of Radford's formula of the fourth power of the antipode of a finite-dimensional Hopf algebra:

\begin{theorem}
  \label{thm:Radford-S4}
  There is an isomorphism of monoidal functors
  \begin{equation}
    \label{eq:Radford-S4}
    {}^{**} X \cong \alpha_{\mathcal{C}} \otimes X^{**}
    \otimes \alpha_{\mathcal{C}}^* \quad (X \in \mathcal{C}).
  \end{equation}
\end{theorem}

We call \eqref{eq:Radford-S4} the {\em Radford $S^4$-formula}. This theorem is first established by Etingof, Nikshych and Ostrik \cite{MR2097289} under the assumption that the base field $k$ is algebraically closed and $\End_{\mathcal{C}}(\unitobj) \cong k$. Other existing proofs \cite{2013arXiv1312.7188D,2014arXiv1402.3482S} work under milder assumptions but seem to heavily rely on the rigidity of $\mathcal{C}^{\env}$. We give a proof of \eqref{eq:Radford-S4} that works without any assumptions.

A key observation for proving \eqref{eq:Radford-S4} is that $\mathcal{C}^{\env}$ acts on $\REX(\mathcal{C})$ not only from the left but also from the right by the action determined by
\begin{equation}
  \label{eq:C-env-r-act-Rex-C}
  F \olessthan (X \boxtimes Y) = F(- \otimes {}^{**}Y) \otimes X
  \quad (F \in \REX(\mathcal{C}), X, Y \in \mathcal{C}).
\end{equation}
One can check that the equivalence $\Phi_{\mathcal{C}}: \mathcal{C}^{\env} \to \REX(\mathcal{C})$ given by~\eqref{eq:equiv-Phi-1} is in fact an equivalence of $\mathcal{C}^{\env}$-bimodule categories.
Hence $\mathcal{C}^{\env}$ also acts on ${}_A \REX(\mathcal{C})$ from the right by~\eqref{eq:C-env-r-act-Rex-C}.

We consider the category $\REX(\mathcal{C})_A$ of right $A$-modules in $\REX(\mathcal{C})$. To describe this category, we introduce the following notation: Given a right $\mathcal{C}$-module category $\mathcal{M}$ and a strong monoidal functor $T: \mathcal{C} \to \mathcal{C}$, we denote by $\mathcal{M}_{\langle T \rangle}$ the category $\mathcal{M}$ with the action twisted by $T$.

\begin{lemma}
  \label{lem:Rex-C-A}
  $\REX(\mathcal{C})_A$ is isomorphic to the category of $k$-linear right $\mathcal{C}$-module functors from $\mathcal{C}_{\langle S^{-2} \rangle}$ to $\mathcal{C}_{\langle S^{2} \rangle}$, where $S = (-)^*$ is the left duality functor on $\mathcal{C}$.
\end{lemma}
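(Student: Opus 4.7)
My plan is to adapt the proof of Lemma~\ref{lem:Rex-C-as-modules} step by step, handling the right-module side. First, I will unwind the right $A$-action on $F \in \REX(\mathcal{C})$ using the coend description of $A$; then apply duality adjunctions to reinterpret the resulting data as a colax right $\mathcal{C}$-module structure on $F$ viewed between the twisted module categories; and finally invoke rigidity to upgrade colax to strong.

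Concretely, using~\eqref{eq:C-env-r-act-Rex-C} and $A = \int^V V^* \boxtimes V$ I will compute
\begin{equation*}
  (F \olessthan A)(M) \cong \int^{V \in \mathcal{C}} F(M \otimes {}^{**}V) \otimes V^*,
\end{equation*}
so that a morphism $a: F \olessthan A \to F$ in $\REX(\mathcal{C})$ is the same data as a family
\begin{equation*}
  a_{M,V}: F(M \otimes {}^{**}V) \otimes V^* \to F(M) \qquad (M, V \in \mathcal{C})
\end{equation*}
natural in $M$ and dinatural in $V$. Applying the adjunction $(-) \otimes V^* \dashv (-) \otimes V^{**}$ (which holds since $V^{**}$ is the left dual of $V^*$), I convert such a family into a family
\begin{equation*}
  \xi_{M,V}: F(M \otimes {}^{**}V) \longrightarrow F(M) \otimes V^{**}.
\end{equation*}
A short computation using the dinaturality of $\coev_{V^*}: \unitobj \to V^* \otimes V^{**}$ in $V$ will show that dinaturality of $a$ in $V$ is equivalent to naturality of $\xi$ in $V$; combined with naturality in $M$, this makes $\xi$ a natural transformation of functors $\mathcal{C} \times \mathcal{C} \to \mathcal{C}$.

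Writing $S = (-)^*$, I then note that $M \otimes {}^{**}V = M \otimes S^{-2}(V)$ and $F(M) \otimes V^{**} = F(M) \otimes S^{2}(V)$, so $\xi_{M,V}$ is precisely a colax right $\mathcal{C}$-module structure on $F$ regarded as a functor $\mathcal{C}_{\langle S^{-2}\rangle} \to \mathcal{C}_{\langle S^2\rangle}$. The multiplication $m$ of $A$, obtained from the identification $(V^* \boxtimes V) \otimes (W^* \boxtimes W) \cong (W \otimes V)^* \boxtimes (W \otimes V)$, together with the unit $u = i_{\unitobj}$, will translate through the adjunction to exactly the associativity and unit coherence axioms for $\xi$ as a right $\mathcal{C}$-module functor. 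Rigidity of $\mathcal{C}$, via the colax analogue of Lemma~\ref{lem:C-mod-func-rigid} (obtained by passing to $\mathcal{C}^{\op}$, as in the proof of Lemma~\ref{lem:Rex-C-as-modules}), will then force $\xi$ to be an isomorphism, so $F$ is a strong right $\mathcal{C}$-module functor. Since morphisms of right $A$-modules correspond bijectively to morphisms of right $\mathcal{C}$-module functors under the same adjunction, this will produce the claimed isomorphism of categories rather than a mere equivalence.

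The part I expect to be the main technical hurdle is checking that the associativity axiom for the $A$-module structure matches the coherence of $\xi$ with respect to the tensor structure of $\mathcal{C}$, since the multiplication of $A$ is built from nested coends and iterated duals and must be threaded through the adjunction. The $S^{\pm 2}$ twists emerge naturally from this translation because the left dual $V^*$ and the right dual ${}^{**}V$ enter the coend description of $A$ asymmetrically; once that compatibility is confirmed, the remaining verifications are formal.
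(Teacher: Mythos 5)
Your proposal follows the paper's proof essentially step for step: the same coend computation of $(F \olessthan A)(M)$ from the action \eqref{eq:C-env-r-act-Rex-C}, the same adjunction $(-) \otimes V^* \dashv (-) \otimes V^{**}$ converting the family $a_{M,V}$ into $\xi_{M,V}: F(M \otimes {}^{**}V) \to F(M) \otimes V^{**}$, the same translation of the $A$-module axioms into colax module-functor coherence, and the same appeal to rigidity (the colax counterpart of Lemma~\ref{lem:C-mod-func-rigid}) to upgrade colax to strong. This is precisely the Day--Street-style argument the paper runs, as in Lemma~\ref{lem:Rex-C-as-modules}.

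There is, however, one genuine (if small) gap. The right-hand category in the statement consists of \emph{all} $k$-linear right $\mathcal{C}$-module functors $\mathcal{C}_{\langle S^{-2}\rangle} \to \mathcal{C}_{\langle S^{2}\rangle}$, with no exactness hypothesis, whereas every functor your correspondence produces is right exact, since it starts life as an object of $\REX(\mathcal{C})$. For your object-level bijection to hit the stated codomain you must show that any such module functor is automatically exact; this is the opening observation of the paper's proof:
\begin{equation*}
  F(X) = F(\unitobj \olessthan X^{**}) \cong F(\unitobj) \olessthan X^{**} = F(\unitobj) \otimes X^{****}
  \quad (X \in \mathcal{C}),
\end{equation*}
and $(-) \otimes X^{****}$ is exact. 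You already have the needed ingredient (strongness of $\xi$, forced by rigidity), so the repair is one line --- but as written your argument only identifies $\REX(\mathcal{C})_A$ with the full subcategory of right exact module functors, not with the category in the statement. A minor cosmetic point: the $S^{-2}$ twist on the source does not come from an asymmetry in the coend $A = \int^{X} X^* \boxtimes X$ itself, which is the same algebra as in Lemma~\ref{lem:Rex-C-as-modules}; it comes from the ${}^{**}Y$ built into the right action \eqref{eq:C-env-r-act-Rex-C}, while the $S^{2}$ twist on the target emerges, as you say, from the duality adjunction.
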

\begin{proof}
  The proof of this lemma is almost the same as Lemma~\ref{lem:Rex-C-as-modules} (and thus it is essentially same as the argument due to Day and Street). We first note that a $k$-linear right $\mathcal{C}$-module functor $\mathcal{C}_{\langle S^{-2} \rangle} \to \mathcal{C}_{\langle S^{2} \rangle}$ is automatically exact. Indeed, if $F$ is such a functor, then
  \begin{equation*}
    F(X) = F(\unitobj \otimes {}^{**}X^{**})
    = F(\unitobj \olessthan X^{**}) \cong F(\unitobj) \olessthan X^{**} = F(\unitobj) \otimes X^{****}
  \end{equation*}
  for all $X \in \mathcal{C}$. Now let $F \in \REX(\mathcal{C})$. Then we have
  \begin{align*}
    \NAT(F \olessthan A, F)
    & \textstyle \cong \int_{V, X \in \mathcal{C}} \Hom_{\mathcal{C}}(F(V \otimes {}^{**}X) \otimes X^*, F(V)) \\
    & \textstyle \cong \int_{V, X \in \mathcal{C}} \Hom_{\mathcal{C}}(F(V \otimes {}^{**}X), F(V) \otimes X^{**}).
  \end{align*}
  Thus a morphism $\mu: F \olessthan A \to F$ is the same thing as a natural transformation
  \begin{equation*}
    \xi_{V,X}: F(V \otimes {}^{**}X) \to F(V) \otimes X^{**}
    \quad (X, V \in \mathcal{C}).
  \end{equation*}
  The morphism $\mu$ makes $F$ a right $A$-module in $\REX(\mathcal{C})$ if and only if $\xi$ makes $F$ a colax $\mathcal{C}$-module functor from $\mathcal{C}_{\langle S^{-2} \rangle}$ to $\mathcal{C}_{\langle S^2 \rangle}$. This correspondence gives rise to an isomorphism of the two categories.
\end{proof}

\begin{proof}[Proof of Theorem~\ref{thm:Radford-S4}]
  Set $G = (-) \otimes \alpha_{\mathcal{C}}$. By~\eqref{eq:Cayley-left-adj-2} and the definition of the equivalence $\mathfrak{H}_{\mathcal{C}}$, we have $G \cong (-) \otimes \mathfrak{H}_{\mathcal{C}}(A) \cong \Phi_{\mathcal{C}}(A)$. Since $A$ is a right $A$-module in $\mathcal{C}^{\env}$, $G$ is in fact a right $A$-module in $\REX(\mathcal{C})$. Thus, by the previous lemma, there is a natural isomorphism
  \begin{equation*}
    \xi_{V,X}: G(V \otimes {}^{**}X) \to G(V) \otimes X^{**} \quad (V, X \in \mathcal{C})
  \end{equation*}
  such that $(G, \xi)$ is a right $\mathcal{C}$-module functor from $\mathcal{C}_{\langle S^{-2} \rangle}$ to $\mathcal{C}_{\langle S^{2} \rangle}$. Since $\alpha_{\mathcal{C}}$ is an invertible object, the composition
  \begin{equation}
    \label{eq:Radford-S4-pf-1}
    {}^{**} X
    \xrightarrow{\quad \id \otimes \coev \quad}
    {}^{**} X \otimes \alpha_{\mathcal{C}} \otimes \alpha_{\mathcal{C}}^*
    \xrightarrow{\quad \xi_{\unitobj, X}  \otimes \id \quad}
    \alpha_{\mathcal{C}} \otimes X^{**} \otimes \alpha_{\mathcal{C}}^*
  \end{equation}
  is an isomorphism for all $X \in \mathcal{C}$. The definition of module functors implies that \eqref{eq:Radford-S4-pf-1} is indeed a morphism of monoidal functors.
\end{proof}

\section{The relative modular object}
\label{sec:rel-mod-obj}

\subsection{Tensor functors}
\label{subsec:ten-fun}

By a {\em tensor category}, we mean a $k$-linear abelian category endowed with a rigid monoidal structure such that the tensor product is bilinear (thus a finite tensor category is precisely a tensor category whose underlying category is a finite abelian category). For a $k$-linear functor $T: \mathcal{C} \to \mathcal{D}$ between tensor categories $\mathcal{C}$ and $\mathcal{D}$, we define
\begin{equation*}
  T^!(X) = {}^*T(X^*)
  \quad \text{and} \quad
  {}^!T(X) = T({}^*X)^*
  \quad (X \in \mathcal{C}).
\end{equation*}
Now let $F: \mathcal{C} \to \mathcal{D}$ and $G: \mathcal{D} \to \mathcal{C}$ be $k$-linear functors. The following easy lemma will be used frequently: 

\begin{lemma}
  \label{lem:ten-cat-adj-1}
  $F \dashv G$ implies $G^! \dashv F^!$ and ${}^!G \dashv {}^!F$.
\end{lemma}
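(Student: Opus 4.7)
The plan is to prove the first adjunction $G^! \dashv F^!$ by chasing the $F \dashv G$ adjunction isomorphism through the natural isomorphisms supplied by the two duality anti-equivalences $(-)^* : \mathcal{C} \to \mathcal{C}^{\op,\rev}$ and ${}^*(-) : \mathcal{C} \to \mathcal{C}^{\op,\rev}$ (and similarly for $\mathcal{D}$); the second adjunction ${}^!G \dashv {}^!F$ will follow from the symmetric computation with the roles of $(-)^*$ and ${}^*(-)$ interchanged.

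Concretely, starting from the right-hand side $\Hom_{\mathcal{D}}(Y, F^!(X)) = \Hom_{\mathcal{D}}(Y, {}^*F(X^*))$, I will apply the anti-equivalence $(-)^*$ on $\mathcal{D}$ to obtain $\Hom_{\mathcal{D}}(F(X^*), Y^*)$, using the assumed cancellation $({}^*Z)^* \cong Z$ to absorb the outer star. Next, the adjunction $F \dashv G$ converts this to $\Hom_{\mathcal{C}}(X^*, G(Y^*))$. Finally, applying the anti-equivalence ${}^*(-)$ on $\mathcal{C}$ produces $\Hom_{\mathcal{C}}({}^*G(Y^*), X) = \Hom_{\mathcal{C}}(G^!(Y), X)$, using ${}^*(Z^*) \cong Z$. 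Each step is natural in $X$ and $Y$, so the composite is the desired adjunction isomorphism.

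The only subtlety is keeping track of which of the two dualities to apply at which step: on the $\mathcal{D}$-side one must use $(-)^*$, in order to eliminate the leading ${}^*$ appearing in $F^!$, while on the $\mathcal{C}$-side one must use ${}^*(-)$, to eliminate the $(-)^*$ appearing inside $G^!$. Reversing this choice introduces a factor of $(-)^{**}$, which need not be the identity in the generality considered. Since $T^!$ and ${}^!T$ are obtained from one another by swapping the two dualities, the argument for ${}^!G \dashv {}^!F$ runs verbatim with $(-)^*$ and ${}^*(-)$ interchanged throughout; I anticipate no obstacle beyond this piece of bookkeeping.
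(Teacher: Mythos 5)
Your proposal is correct and is essentially the paper's own proof: the paper establishes $G^!\dashv F^!$ by the same three-step chain $\Hom_{\mathcal{C}}(G^!(V),X)\cong\Hom_{\mathcal{C}}(X^*,G(V^*))\cong\Hom_{\mathcal{D}}(F(X^*),V^*)\cong\Hom_{\mathcal{D}}(V,F^!(X))$, which is your composite read in the opposite direction, with the same care about which duality cancels which star. Your observation that ${}^!G\dashv{}^!F$ follows by interchanging $(-)^*$ and ${}^*(-)$ matches the paper's ``in a similar way.''
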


By a {\em tensor functor}, we mean a $k$-linear strong monoidal functor between tensor categories. We note that a tensor functor $F: \mathcal{C} \to \mathcal{D}$ preserves the duality. Thus, we have $F^! \cong F \cong {}^!F$. If $F$ has a right adjoint $R$, then $R^! \dashv F^! \cong F$ and ${}^! R \dashv F$ by the above lemma. Similarly, if $L \dashv F$, then $F \dashv L^!$ and $F \dashv {}^!L$. Summarizing, we have the following result \cite[Lemma 3.5]{MR2869176}:

\begin{lemma}
  \label{lem:ten-cat-adj-2}
  A tensor functor $F$ has a left adjoint if and only if it has a right adjoint. If $L \dashv F \dashv R$, then ${}^! \! R \cong L \cong R^!$ and ${}^! \! L \cong R \cong L^!$.
\end{lemma}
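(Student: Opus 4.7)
The plan is to deduce the lemma directly from Lemma~\ref{lem:ten-cat-adj-1} together with the observation that a tensor functor $F$ commutes with duality, so that $F^! \cong F \cong {}^!F$. This is a short formal argument; no real calculation is needed.

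First, suppose $F \dashv R$. Applying Lemma~\ref{lem:ten-cat-adj-1} gives $R^! \dashv F^!$ and ${}^!R \dashv {}^!F$. Since $F$ is a tensor functor, it preserves left and right duals, so $F^! \cong F$ and ${}^!F \cong F$. Therefore $R^! \dashv F$ and ${}^!R \dashv F$, which shows in particular that $F$ has a left adjoint and, by the uniqueness of adjoints up to isomorphism, yields $L \cong R^! \cong {}^!R$. Dually, if $L \dashv F$, then Lemma~\ref{lem:ten-cat-adj-1} gives $F^! \dashv L^!$ and ${}^!F \dashv {}^!L$; using $F^! \cong F \cong {}^!F$ again, we get $F \dashv L^!$ and $F \dashv {}^!L$, whence $F$ has a right adjoint and $R \cong L^! \cong {}^!L$.

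The two directions together show the equivalence "has a left adjoint $\iff$ has a right adjoint", and the displayed isomorphisms $L \cong R^! \cong {}^!R$ and $R \cong L^! \cong {}^!L$ are exactly the relations stated in the lemma. There is no real obstacle here beyond being careful that the isomorphism $F^! \cong F$ provided by the tensor structure is genuinely a $k$-linear natural isomorphism, so that pre- or post-composing an adjunction with it yields another adjunction; this is standard given the assumption that $F$ is strong monoidal and commutes with duality.
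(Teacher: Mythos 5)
Your proof is correct and follows essentially the same route as the paper: the text preceding the lemma derives it exactly as you do, by applying Lemma~\ref{lem:ten-cat-adj-1} to the adjunctions $F \dashv R$ and $L \dashv F$, using that a tensor functor preserves duality so that $F^! \cong F \cong {}^!F$, and then concluding the displayed isomorphisms from uniqueness of adjoints. Your closing remark about the natural isomorphism $F^! \cong F$ being compatible with composing adjunctions is the same standard point the paper leaves implicit.
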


\subsection{The relative modular object}

Now we introduce the notion of the relative modular object for a tensor functor with nice properties.
We first prove the following lemma:

\begin{lemma}
  \label{lem:rel-mod-def}
  Let $F: \mathcal{C} \to \mathcal{D}$ be a tensor functor between tensor categories $\mathcal{C}$ and $\mathcal{D}$, and suppose that it has a left adjoint $L$ and a right adjoint $R$. Then the following assertions are equivalent:
  \begin{itemize}
  \item [(1)] $L$ has a left adjoint.
  \item [(2)] $R$ has a right adjoint.
  \item [(3)] There exists an object $\chi_F \in \mathcal{D}$ such that $R \cong L(- \otimes \chi_F)$.
  \end{itemize}
  Such an object $\chi_F \in \mathcal{D}$ is unique up to isomorphism if it exists. More precisely, if the above conditions hold, then we have isomorphisms
  \begin{equation}
    \label{eq:rel-mod-formula-1}
    E(\unitobj) \cong \chi_F \cong {}^*G(\unitobj), \quad \text{where $E \dashv L \dashv F \dashv R \dashv G$}.
  \end{equation}
  If, moreover, $\mathcal{C}$ and $\mathcal{D}$ are finite tensor categories, then the above three conditions are equivalent to each of the following three conditions:
  \begin{itemize}
  \item [(4)] $L$ is exact.
  \item [(5)] $R$ is exact.
  \item [(6)] $F(P)$ is projective for every projective object $P \in \mathcal{C}$.
  \end{itemize}
\end{lemma}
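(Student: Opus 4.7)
The plan is to handle the easy equivalences first and isolate $(2)\Rightarrow(3)$ as the main work. For $(1)\Leftrightarrow(2)$, I would apply Lemma~\ref{lem:ten-cat-adj-1} to the adjunctions $L'\dashv L$ or $R\dashv G$: if $L'\dashv L$ then $L^{!}\dashv (L')^{!}$, and Lemma~\ref{lem:ten-cat-adj-2} gives $L^{!}\cong R$, so $R$ has right adjoint $(L')^{!}$; the converse is symmetric. For $(3)\Rightarrow(2)$, note that $(-)\otimes\chi_F$ has right adjoint $(-)\otimes\chi_F^*$ by rigidity, so composing with $L\dashv F$ yields $R\cong L(-\otimes\chi_F)\dashv F(-)\otimes\chi_F^{*}$. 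In particular $G:=F(-)\otimes\chi_F^*$ is a right adjoint of $R$, and evaluating at $\unitobj$ gives $G(\unitobj)\cong\chi_F^*$, i.e.\ $\chi_F\cong {}^*G(\unitobj)$, which proves the uniqueness assertion.

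The crux is $(2)\Rightarrow(3)$. I would obtain $\chi_F$ by equipping $G$ with a strong $\mathcal{C}$-module structure. View $\mathcal{C}$ as a $\mathcal{C}$-module category over itself, and view $\mathcal{D}$ as a $\mathcal{C}$-module via $V\ogreaterthan Y:=F(V)\otimes Y$; since $F$ is a tensor functor it is strong (hence colax) $\mathcal{C}$-module with this action. By Lemma~\ref{lem:C-mod-func-adj} the right adjoint $R$ inherits a lax $\mathcal{C}$-module structure, which by rigidity of $\mathcal{C}$ and Lemma~\ref{lem:C-mod-func-rigid} is in fact strong. Applying the same two lemmas once more to the adjunction $R\dashv G$ shows that $G\colon \mathcal{C}\to\mathcal{D}$ is strong $\mathcal{C}$-module as well. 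Setting $\chi_F:={}^*G(\unitobj)$ and specialising the module structure at $W=\unitobj$ yields a natural isomorphism $G(V)\cong F(V)\otimes G(\unitobj)\cong F(V)\otimes\chi_F^*$. Taking left adjoints on both sides, the left adjoint of $F(-)\otimes\chi_F^*$ is $L(-\otimes\chi_F)$, and since both are left adjoint to $G$ and $R$ respectively, we conclude $R\cong L(-\otimes\chi_F)$.

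For the equivalences of $(1)$--$(3)$ with $(4)$--$(7)$ in the finite case, I would apply Lemma~\ref{lem:EW-thm} (and its dual, which characterises the existence of a left adjoint by left-exactness). Since $L$ is automatically right exact as a left adjoint, the existence of a further left adjoint of $L$ is equivalent to $L$ being left exact, i.e.\ exact; this gives $(1)\Leftrightarrow(4)$, and symmetrically $(2)\Leftrightarrow(5)$. The equivalences $(5)\Leftrightarrow(6)\Leftrightarrow(7)$ are standard: through $F\dashv R$, exactness of $R$ translates to $F$ sending projectives to projectives, and one reduces from a projective generator $P\in\mathcal{C}$ to all projective objects using the fact that $\Hom_{\mathcal{C}}(P,-)$ is exact and reflects epimorphisms.

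The main obstacle is $(2)\Rightarrow(3)$: it requires iterating Lemmas~\ref{lem:C-mod-func-rigid} and~\ref{lem:C-mod-func-adj} twice while carefully distinguishing the tautological $\mathcal{C}$-module structure on $\mathcal{C}$ from the pulled-back one on $\mathcal{D}$, and tracking the variance (lax vs.\ colax) at each adjoint. The essential point is that rigidity of $\mathcal{C}$ promotes each lax module structure to a strong one, which is what turns the projection-formula morphism $F(V)\otimes G(\unitobj)\to G(V)$ into an honest isomorphism.
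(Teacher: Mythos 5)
Your proposal is correct and takes essentially the same route as the paper: the equivalence (1)$\Leftrightarrow$(2) via Lemmas~\ref{lem:ten-cat-adj-1} and~\ref{lem:ten-cat-adj-2}, the key step (2)$\Rightarrow$(3) by pulling back the $\mathcal{C}$-action to $\mathcal{D}$ along $F$ and iterating Lemmas~\ref{lem:C-mod-func-rigid} and~\ref{lem:C-mod-func-adj} to make $G$ a strong $\mathcal{C}$-module functor with $G \cong F(-)\otimes \chi_F^*$, and the finite case via Lemma~\ref{lem:EW-thm}. The only cosmetic difference is that your direct Hom-adjunction argument for (5)$\Leftrightarrow$(6)$\Leftrightarrow$(7) unpacks what the paper obtains by applying Lemma~\ref{lem:C-proj} to $\mathcal{D}$ viewed as a $\mathcal{C}$-module category with internal Hom $\iHom(V,W)=R(W\otimes V^*)$.
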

\begin{proof}
  The equivalence (1) $\Leftrightarrow$ (2) follows from Lemmas~\ref{lem:ten-cat-adj-1} and~\ref{lem:ten-cat-adj-2}. In more detail, if $L$ has a left adjoint $E$, then $E^!$ is right adjoint to $R \cong L^!$. Similarly, if $R$ has a right adjoint $G$, then $G^!$ is left adjoint to $L \cong R^!$.

  To show (2) $\Leftrightarrow$ (3), we note that $\mathcal{D}$ is a left $\mathcal{C}$-module category by the action given by $X \ogreaterthan V = F(X) \otimes V$ ($X \in \mathcal{C}$, $V \in \mathcal{D}$). Suppose that $R$ has a right adjoint $G$, and set $\chi = {}^* G(\unitobj)$. Since $F: \mathcal{C} \to \mathcal{D}$ is a $\mathcal{C}$-module functor, so is $R$ by Lemmas~\ref{lem:C-mod-func-rigid} and \ref{lem:C-mod-func-adj}, and thus so is $G$. Hence, for $X \in \mathcal{C}$, we have
  \begin{equation}
    \label{eq:rel-mod-def-pf-1}
    G(X) = G(X \otimes \unitobj) \cong X \ogreaterthan G(\unitobj) = F(X) \otimes \chi^*.
  \end{equation}
  By definition, $R$ is left adjoint to $G$. On the other hand, by~\eqref{eq:rel-mod-def-pf-1}, we have
  \begin{equation*}
    \Hom_{\mathcal{D}}(V, G(X))
    \cong \Hom_{\mathcal{D}}(V \otimes \chi, F(X))
    \cong \Hom_{\mathcal{C}}(L(V \otimes \chi), X)
  \end{equation*}
  for $V \in \mathcal{D}$ and $X \in \mathcal{C}$. Thus $R \cong L(- \otimes \chi)$. Conversely, if such an object $\chi$ exists, then we see that $F(-) \otimes \chi^*$ is right adjoint to $R$ by a similar computation. Hence we have proved (2) $\Leftrightarrow$ (3).

  Now we suppose that (1), (2) and (3) hold, and let $E$ and $G$ be a left adjoint and a right adjoint of $L$ and $R$, respectively. Then, as we have seen at the beginning of the proof, we have $G^! \cong E$. The second isomorphism in~\eqref{eq:rel-mod-formula-1} has been proved in the above argument. The first one follows from the second one as follows:
  \begin{equation*}
    \chi_F \cong {}^*G(\unitobj) = {}^*G(\unitobj^*) = G^!(\unitobj) \cong E(\unitobj).
  \end{equation*}

  Finally, we assume that $\mathcal{C}$ and $\mathcal{D}$ are finite. Then (1) $\Leftrightarrow$ (4) and (2) $\Leftrightarrow$ (5) follow from Lemma~\ref{lem:EW-thm}. To show that (5) and (6) are equivalent, we make the category $\mathcal{D}$ a finite left $\mathcal{C}$-module category by $F$ in the same way as above. For $V, W \in \mathcal{D}$ and $X \in \mathcal{C}$, there is a natural isomorphism
  \begin{equation}
    \Hom_{\mathcal{D}}(X \ogreaterthan V, W) \cong \Hom_{\mathcal{C}}(X, R(W \otimes V^*)).
  \end{equation}
  Namely, $\iHom(V, W) := R(W \otimes V^*)$ is the internal Hom functor for the $\mathcal{C}$-module category $\mathcal{D}$. By the basic results recalled in \S\ref{subsec:fin-mod-cat}, we see that each of (5) and (6) is equivalent to that $\mathcal{D}$ is an exact $\mathcal{C}$-module category.
\end{proof}

\begin{definition}
  We call $\chi_F^{}$ of Lemma~\ref{lem:rel-mod-def} the {\em relative modular object} of $F$.
\end{definition}

As we explain the detail in Remark~\ref{rem:FMS-thm}, this object generalizes the relative modular function of an extension of finite-dimensional Hopf algebras. The following result is also a motivation of our definition.

\begin{example}
  \label{ex:rel-mod-cent}
  Let $\mathcal{C}$ be a finite tensor category such that $\mathcal{C}^{\env}$ is rigid\footnote{If this assumption is dropped, then $U: \mathcal{Z}(\mathcal{C}) \to \mathcal{C}$ may not satisfy the equivalent conditions of Lemma~\ref{lem:rel-mod-def}. For example, suppose that $k$ is imperfect and set $\mathcal{V} = (\mbox{{\rm mod}-}k, \otimes_k, k)$. Let $\ell$ be a finite inseparable field extension of $k$. Then $\mathcal{C} = (\ell\mbox{-{\rm mod}-}\ell, \otimes_{\ell}, \ell)$ is a finite tensor category over $k$ such that $\mathcal{C}^{\env}$ is not rigid. By Schauenburg \cite{MR1822847}, there are equivalences $\mathcal{Z}(\mathcal{C}) \approx \mathcal{Z}(\mathcal{V}) \approx \mathcal{V}$ of braided monoidal categories. If we identify $\mathcal{Z}(\mathcal{C})$ with $\mathcal{V}$, then the functor $U$ corresponds to the unique (up to isomorphism) tensor functor $\mathcal{V} \to \mathcal{C}$ sending $k \in \mathcal{V}$ to $\ell \in \mathcal{C}$. Thus its right adjoint corresponds to $\Hom_{\ell \otimes_k \ell^{\op}}(\ell, -)$, which is not exact since $\ell$ is inseparable over $k$.}. The main result of \cite{2014arXiv1402.3482S} can be rephrased as follows: The forgetful functor $U: \mathcal{Z}(\mathcal{C}) \to \mathcal{C}$ satisfies the equivalent conditions of Lemma~\ref{lem:rel-mod-def}, and the relative modular object of $U$ is given by $\chi_U^{} = \alpha_{\mathcal{C}}^*$.
\end{example}

Till the end of this subsection, let $F: \mathcal{C} \to \mathcal{D}$ be a tensor functor between tensor categories $\mathcal{C}$ and $\mathcal{D}$ satisfying the equivalent conditions of Lemma~\ref{lem:rel-mod-def}, and let $L$ and $R$ be a left adjoint and a right adjoint of $F$.
We first give the following characterizations of the relative modular object:

\begin{proposition}
  \label{prop:rel-mod-def-2}
  If $\chi = \chi_F$, then there are natural isomorphisms
  \begin{equation}
    \label{eq:rel-mod-1-1}
    L(V \otimes \chi) \cong R(V) \cong L(\chi \otimes V)
    \quad \text{and} \quad
    R(\chi^* \otimes V) \cong L(V) \cong R(V \otimes \chi^*)
  \end{equation}
  for $V \in \mathcal{D}$. Each of these natural isomorphisms characterizes the relative modular object: Namely, if $\chi \in \mathcal{D}$ is an object such that one of the isomorphisms in \eqref{eq:rel-mod-1-1} exists, then $\chi \cong \chi_F^{}$.
\end{proposition}
\begin{proof}
  Let $E$ be a left adjoint of $L$, and let $G$ be a right adjoint of $R$. Taking adjoints, we see that~\eqref{eq:rel-mod-1-1} is equivalent to
  \begin{equation}
    \label{eq:rel-mod-1-2}
    F(-) \otimes \chi^* \cong G \cong {}^*\chi \otimes F(-)
    \quad \text{and} \quad
    \chi^{**} \otimes F(-) \cong E \cong F(-) \otimes \chi.
  \end{equation}
  We now set $\chi = \chi_F$. To prove~\eqref{eq:rel-mod-1-2}, we first remark $\chi^{**} \cong \chi$. Indeed, by Lemma~\ref{lem:ten-cat-adj-1}, we see that ${}^!G$ is left adjoint to $L \cong {}^!R$. Hence, by~\eqref{eq:rel-mod-formula-1}, we have
  \begin{equation}
    \label{eq:rel-mod-1-3}
    \chi \cong E(\unitobj) \cong {}^!G(\unitobj) \cong G(\unitobj)^{*} \cong {}^{*}G(\unitobj)^{**} \cong \chi^{**}.
  \end{equation}
  The category $\mathcal{D}$ is a $\mathcal{C}$-bimodule category via $F$. Since $F$ is a $\mathcal{C}$-bimodule functor, so is $R$, and hence so is $G$. Thus we have natural isomorphisms
  \begin{align}
    \label{eq:rel-mod-1-4}
    F(X) \otimes G(\unitobj) = X \ogreaterthan G(\unitobj)
    \cong G(X)
    \cong G(\unitobj) \olessthan X
    = G(\unitobj) \otimes F(X)
  \end{align}
  for $X \in \mathcal{C}$. Now the first two isomorphisms in~\eqref{eq:rel-mod-1-2} follow from~\eqref{eq:rel-mod-formula-1}, \eqref{eq:rel-mod-1-3} and \eqref{eq:rel-mod-1-4}. The latter two isomorphisms are proved in a similar way by using the fact that $E$ is a $\mathcal{C}$-bimodule functor.

  To see that each of the isomorphisms in \eqref{eq:rel-mod-1-1} characterizes the relative modular object, consider adjoints of them and then use \eqref{eq:rel-mod-formula-1}. For example, if $\mu \in \mathcal{D}$ is an object such that $L \cong R(- \otimes \mu^*)$, then we have $E \cong F(-) \otimes \mu^*$ by taking left adjoints of both sides. Hence $\mu \cong \chi_F$ by \eqref{eq:rel-mod-formula-1}. The other cases are proved analogously.
\end{proof}

\begin{proposition}
  \label{prop:rel-mod-1}
  The object $\chi_F$ has the following properties:
  \begin{enumerate}
  \item $\chi_F \cong \unitobj$ if and only if $F$ is Frobenius.
  \item $\chi_F$ is an invertible object.
  \item $\chi_F$ belongs to the centralizer of $F$.
  \end{enumerate}
\end{proposition}
\begin{proof}
  Write $\chi = \chi_F$. Part (1) is obvious from the definition. By~\eqref{eq:rel-mod-1-1}, we have
  \begin{equation*}
    L(\chi^* \otimes V \otimes \chi)
    \cong R(\chi^* \otimes V)
    \cong L(V)
  \end{equation*}
  for $V \in \mathcal{D}$. Taking right adjoints, we have a natural isomorphism
  \begin{equation*}
    \chi \otimes F(X) \otimes \chi^* \cong F(X)
  \end{equation*}
  for $X \in \mathcal{C}$. Part (2) is proved by letting $X = \unitobj$ (since $\mathcal{D}$ is a multi-ring category in the sense of \cite{MR3242743}, an object $\mu$ of $\mathcal{D}$ is invertible if and only if $\mu \otimes \mu^* \cong \unitobj$; see \cite[\S4.3]{MR3242743}). To prove Part (3), let $E$ be a left adjoint of $L$. In view of \eqref{eq:rel-mod-formula-1}, we may assume $\chi = E(\unitobj)$. As we have seen in the proof of Proposition~\ref{prop:rel-mod-def-2}, $E$ is a $\mathcal{C}$-bimodule functor if we view $\mathcal{D}$ as a $\mathcal{C}$-bimodule category via $F$. Thus,
  \begin{equation*}
    \sigma_V:
    \chi \otimes F(V)
    = E(\unitobj) \olessthan V
    \cong
    E(V)
    \cong
    V \ogreaterthan E(\unitobj)
    = F(V) \otimes \chi
    \quad (V \in \mathcal{C})
  \end{equation*}
  is a natural transformation such that $(\chi, \sigma) \in \mathcal{Z}(F)$.
\end{proof}

\subsection{A formula for the relative modular object}
\label{subsec:rel-mod-obj-formula}

Balan \cite{2014arXiv1411.2236B} proved a similar result as Lemma~\ref{lem:rel-mod-def} from the viewpoint of the theory of Hopf monads. Moreover, Balmer, Dell'Ambrogio and Sanders \cite{2015arXiv150101999B} showed similar results in a quite general (but symmetric) setting of tensor-triangulated categories. Mentioning these results, we wonder that the results of the previous subsection are only an instance of a very general principal in the monoidal category theory.

In any case, our results are not sufficient as a generalization of the theorem of Fischman, Montgomery and Schneider mentioned in Introduction: Their result can be thought of as an explicit formula for the relative modular object of $\mathrm{res}^A_B$ in terms of the modular function (see Remark~\ref{rem:FMS-thm} below), while our results do not give any information about the relative modular object. The second main result of this section is the following formula for the relative modular object:

\begin{theorem}
  \label{thm:mod-obj-3}
  Let $F: \mathcal{C} \to \mathcal{D}$ be an exact tensor functor between finite tensor categories satisfying the equivalent conditions in Lemma~\ref{lem:rel-mod-def}. Then
  \begin{equation*}
    \alpha_{\mathcal{D}}^* \otimes F(\alpha_{\mathcal{C}})
    \cong \chi_F^{}
    \cong F(\alpha_{\mathcal{C}}) \otimes \alpha_{\mathcal{D}}^*.
  \end{equation*}
\end{theorem}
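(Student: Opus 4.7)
The plan is to establish the first isomorphism $F(\alpha_{\mathcal{C}}) \cong \chi_F \otimes \alpha_{\mathcal{D}}$ by a Yoneda-style computation of the representable functor $\Hom_{\mathcal{D}}(F(\alpha_{\mathcal{C}}), -)$, and then derive the companion isomorphism $F(\alpha_{\mathcal{C}}) \cong \alpha_{\mathcal{D}} \otimes \chi_F$ from the Radford $S^4$-formula of Theorem~\ref{thm:Radford-S4} applied at $X = \chi_F$: since $\chi_F$ is invertible by Theorem~\ref{thm:rel-mod-1} (so $\chi_F^{**} \cong \chi_F$, and hence $\chi_F^{****} \cong \chi_F$), we have $\chi_F \otimes \alpha_{\mathcal{D}} \cong \alpha_{\mathcal{D}} \otimes \chi_F^{****} \cong \alpha_{\mathcal{D}} \otimes \chi_F$.

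For an arbitrary test object $V \in \mathcal{D}$, I chain the following natural isomorphisms. First, $F \dashv R$ and the defining adjunction $\alpha_{\mathcal{C}} = \Upsilon_{\mathcal{C}}^*[J_{\mathcal{C}}]$ give $\Hom_{\mathcal{D}}(F(\alpha_{\mathcal{C}}), V) \cong \Hom_{\mathcal{C}}(\alpha_{\mathcal{C}}, R(V)) \cong \NAT(J_{\mathcal{C}}, \Upsilon_{\mathcal{C}}(R(V)))$. Because $R$ is a (strong) $\mathcal{C}$-module functor by Lemmas~\ref{lem:C-mod-func-rigid} and~\ref{lem:C-mod-func-adj}, the projection formula $X \otimes R(V) \cong R(F(X) \otimes V)$ rewrites this as $\NAT_{[\mathcal{C},\mathcal{C}]}(J_{\mathcal{C}}, R \circ (F(-) \otimes V))$. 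Applying $F \dashv R$ hom by hom inside the end turns it into $\NAT_{[\mathcal{C},\mathcal{D}]}(F \circ J_{\mathcal{C}}, F(-) \otimes V)$, and the universal property of the left Kan extension along $F$ rewrites it finally as $\NAT_{[\mathcal{D},\mathcal{D}]}(\mathrm{Lan}_F(F \circ J_{\mathcal{C}}), (-) \otimes V)$.

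The crucial step is the identification $\mathrm{Lan}_F(F \circ J_{\mathcal{C}}) \cong J_{\mathcal{D}} \circ (- \otimes \chi_F)$. Using the coend formula for the Kan extension, the adjunction $F \dashv R$, and co-Yoneda, one calculates
\[
  \mathrm{Lan}_F(F J_{\mathcal{C}})(V) \cong \Bigl(\int^{X \in \mathcal{C}} \Hom(X, R(V)) \otimes \Hom(X, \unitobj)^* \Bigr) \bullet \unitobj \cong \Hom_{\mathcal{C}}(R(V), \unitobj)^* \bullet \unitobj.
\]
The right adjoint $G$ of $R$ exists by condition (2) of Lemma~\ref{lem:rel-mod-def}, and taking right adjoints in $R \cong L(- \otimes \chi_F)$ yields the explicit formula $G(X) \cong F(X) \otimes \chi_F^*$, so $G(\unitobj) \cong \chi_F^*$. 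Combining $R \dashv G$ with the rigidity identity $\Hom_{\mathcal{D}}(V, \chi_F^*) \cong \Hom_{\mathcal{D}}(V \otimes \chi_F, \unitobj)$ (which uses $\chi_F^{**} \cong \chi_F$ from Theorem~\ref{thm:rel-mod-1}) gives $\mathrm{Lan}_F(F J_{\mathcal{C}})(V) \cong J_{\mathcal{D}}(V \otimes \chi_F)$.

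Since $(-) \otimes \chi_F$ is an autoequivalence of $\mathcal{D}$, a change of variable in the end yields
\[
  \NAT\bigl(J_{\mathcal{D}} \circ (- \otimes \chi_F), (-) \otimes V\bigr) \cong \NAT(J_{\mathcal{D}}, (-) \otimes \chi_F^{-1} \otimes V) \cong \Hom_{\mathcal{D}}(\alpha_{\mathcal{D}}, \chi_F^{-1} \otimes V) \cong \Hom_{\mathcal{D}}(\chi_F \otimes \alpha_{\mathcal{D}}, V),
\]
and Yoneda's lemma concludes $F(\alpha_{\mathcal{C}}) \cong \chi_F \otimes \alpha_{\mathcal{D}}$. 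The main obstacle I expect is in the Kan extension identification: one must carefully track the orientation of all dualities, and it is essential that $\chi_F^{**} \cong \chi_F$ (rather than merely abstractly isomorphic) for the rigidity identity to land on exactly the form of $J_{\mathcal{D}}$ that we need, rather than $J_{\mathcal{D}}$ twisted by some higher double dual.
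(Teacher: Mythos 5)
Your proposal is correct and is essentially the paper's own proof in pointwise (Yoneda) form: since $F \dashv R$ implies $\mathrm{Lan}_F(G) \cong G \circ R$, your key identification $\mathrm{Lan}_F(F \circ J_{\mathcal{C}}) \cong J_{\mathcal{D}}(- \otimes \chi_F)$ is precisely the paper's computation of $\REX(R,F)[J_{\mathcal{C}}] \cong \chi_F \ogreaterthan J_{\mathcal{D}}$, and your chain of adjunctions (projection formula for $R$, post-/pre-composition adjunctions, $G(\unitobj) \cong \chi_F^*$) is the mate of the paper's functor-level isomorphism $F \circ \Upsilon_{\mathcal{C}}^* \cong \Upsilon_{\mathcal{D}}^* \circ \REX(R,F)$ applied to $J_{\mathcal{C}}$. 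Your derivation of the second isomorphism from Theorem~\ref{thm:Radford-S4} with $\chi_F^{****} \cong \chi_F$ is the paper's argument verbatim (only a small remark: the identity $\Hom_{\mathcal{D}}(V,\chi_F^*) \cong \Hom_{\mathcal{D}}(V \otimes \chi_F, \unitobj)$ is just the adjunction $(-)\otimes\chi_F \dashv (-)\otimes\chi_F^*$ and needs no appeal to $\chi_F^{**} \cong \chi_F$).
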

\begin{proof}
  Let $L$ and $R$ be a left and a right adjoint of $F$, respectively. As before, we make $\mathcal{D}$ a left $\mathcal{C}$-module category by $F$. By Lemmas~\ref{lem:C-mod-func-rigid} and~\ref{lem:C-mod-func-adj}, the functor $R$ is a $\mathcal{C}$-module functor. This means that there is a natural isomorphism $R(F(X) \otimes V) \cong X \otimes R(V)$ for $X \in \mathcal{C}$ and $V \in \mathcal{D}$. In other words, there is an isomorphism
  \begin{equation}
    \label{eq:mod-obj-3-pf-1}
    \Upsilon_{\mathcal{C}} \circ R \cong \REX(F, R) \circ \Upsilon_{\mathcal{D}},
  \end{equation}
  where $\Upsilon_{\mathcal{C}}$ and $\Upsilon_{\mathcal{D}}$ are the Cayley functors introduced in \S\ref{subsec:modular-obj}.

  We consider left adjoints of both sides of \eqref{eq:mod-obj-3-pf-1}. As before, we denote by $\Upsilon^*_{\square}$ a left adjoint of $\Upsilon_{\square}$ ($\square = \mathcal{C}, \mathcal{D}$). It is trivial that a left adjoint of $R$ is $F$. To get a left adjoint of $\REX(F, R)$, we note that there are natural isomorphisms
  \begin{equation*}
    \NAT(F \circ T_1, T_2) \cong \NAT(T_1, R \circ T_2)
    \quad \text{and} \quad
    \NAT(T_3 \circ R, T_4) \cong \NAT(T_3, T_4 \circ F)
  \end{equation*}
  for $T_1: \mathcal{C} \to \mathcal{C}$, $T_2: \mathcal{D} \to \mathcal{C}$, $T_3: \mathcal{C} \to \mathcal{D}$ and $T_4: \mathcal{D} \to \mathcal{D}$ (see \cite[X.5 and X.7]{MR1712872} for these natural isomorphisms). Hence we have
  \begin{equation*}
    \NAT(T, R \circ T' \circ F)
    \cong \NAT(F \circ T, T' \circ F)
    \cong \NAT(F \circ T \circ R, F)
  \end{equation*}
  for $T: \mathcal{C} \to \mathcal{C}$ and $T': \mathcal{D} \to \mathcal{D}$. This means $\REX(R, F) \dashv \REX(F, R)$. Thus, taking left adjoints of both sides of \eqref{eq:mod-obj-3-pf-1}, we get
  \begin{equation}
    \label{eq:mod-obj-3-pf-2}
    F \circ \Upsilon_{\mathcal{C}}^* \cong \Upsilon_{\mathcal{D}}^* \circ \REX(R, F).
  \end{equation}

  We now compute the image of $J_{\mathcal{C}} \in \REX(\mathcal{C})$ under \eqref{eq:mod-obj-3-pf-2}. By the definition of the modular object, we have $F(\Upsilon_{\mathcal{C}}^*[J_{\mathcal{C}}]) = F(\alpha_{\mathcal{C}})$.
  On the other hand,
  \begin{align*}
    (\REX(R, F) [J_{\mathcal{C}}]) (V)
    & \cong F(\Hom_{\mathcal{C}}(R(V), \unitobj)^* \bullet \unitobj) \\
    & \cong \Hom_{\mathcal{C}}(L(V \otimes \chi_F), \unitobj)^* \bullet F(\unitobj) \\
    & \cong \Hom_{\mathcal{D}}(V \otimes \chi_F, F(\unitobj))^* \bullet \unitobj \\
    & \cong J_{\mathcal{D}}(V \otimes \chi_F)
  \end{align*}
  for $V \in \mathcal{C}$. If we define an action of $\mathcal{D}$ on $\REX(\mathcal{D})$ by $X \ogreaterthan T = T(- \otimes X)$ for $X \in \mathcal{D}$ and $T \in \REX(\mathcal{D})$, then the above result reads:
  \begin{equation*}
    \REX(R, F)[J_{\mathcal{C}}] \cong \chi_F \ogreaterthan J_{\mathcal{D}}.
  \end{equation*}
  Since $\Upsilon_{\mathcal{D}}: \mathcal{D} \to \REX(\mathcal{D})$ is a left $\mathcal{D}$-module functor, so is $\Upsilon_{\mathcal{D}}^*$. Thus,
  \begin{equation*}
    \Upsilon_{\mathcal{D}}^* \Big[ \REX(R, F)[J_{\mathcal{C}}] \Big]
    \cong \Upsilon_{\mathcal{D}}^* [\chi_F \ogreaterthan J_{\mathcal{D}}]
    \cong \chi_F \otimes \Upsilon_{\mathcal{D}}^* [J_{\mathcal{D}}] = \chi_F \otimes \alpha_{\mathcal{D}}.
  \end{equation*}
  Hence $\chi_F \otimes \alpha_{\mathcal{D}} \cong F(\alpha_{\mathcal{C}})$. Since $\alpha_{\mathcal{D}}$ is invertible, we obtain the first isomorphism of the statement of this theorem. The second isomorphism follows from the first one and the Radford $S^4$-formula.
\end{proof}

\begin{remark}
  \label{rem:FMS-thm}
  We shall explain how this theorem implies a result of Fischman, Montgomery and Schneider \cite{MR1401518}. Let $A/B$ be an extension of finite-dimensional Hopf algebras over $k$, and let $F := \mathrm{res}^A_B: \mbox{\rm mod-}A \to \mbox{\rm mod-}B$ be the restriction functor. As we have mentioned in Introduction, the functors
  \begin{equation*}
    L := (-) \otimes_B A
    \quad \text{and} \quad
    R := (-) \otimes_B \Hom_B(A_B, B_B)
  \end{equation*}
  are a left adjoint and a right adjoint of $F$, respectively. Recall that we have used the Nichols-Zoeller theorem to obtain the above expression of $R$. The theorem allows us to apply our results to $F$. Consequently, we obtain
  \begin{equation}
    \label{eq:FMS-proof-1}
    L(\chi_F \otimes_k X) \cong R(X)
    \quad (X \in \mbox{\rm mod-}B),
  \end{equation}
  where $\chi_F$ is the right $B$-module corresponding to the relative modular function given by~\eqref{eq:rel-mod-func}. The relative Nakayama automorphism $\beta = \beta_{A/B}$ corresponds to the functor $\chi_F \otimes (-)$. Since
  \begin{equation*}
    L(\chi_F \otimes_k X)
    \cong L(X_{\beta})
    \cong (X_{\beta}) \otimes_B A
    \cong X \otimes_B ({}_{\beta^{-1}}A),
  \end{equation*}
  we get an isomorphism ${}_{\beta^{-1}} A_A \cong \Hom_A(A_B, B)$ of $B$-$A$-bimodules from \eqref{eq:FMS-proof-1}. In other words, the extension $A/B$ is $\beta$-Frobenius \cite[Theorem 1.7]{MR1401518}.
\end{remark}

\begin{remark}
  \label{rem:NZ-thm}
  We have used the Nichols-Zoeller theorem to apply Theorem~\ref{thm:mod-obj-3} in the above. Like this, some non-trivial results will be needed to apply our results. Here we note the following criteria: An exact tensor functor $F: \mathcal{C} \to \mathcal{D}$ between finite tensor categories satisfies the equivalent conditions of Lemma~\ref{lem:rel-mod-def} if it is faithful and {\em surjective} in the sense that every object of $\mathcal{D}$ is a quotient of $F(X)$ for some $X \in \mathcal{C}$ \cite[Theorem 2.5 and Section 3]{MR2119143} (notice that our terminology slight differs from theirs).
\end{remark}

\section{Braided Hopf algebras}
\label{sec:braided-Hopf}

\subsection{Main result of this section}

In this section, we give a description of the modular object of the category of right modules over a Hopf algebra in a braided finite tensor category (often called a {\em braided Hopf algebra}). To state our result, we first fix some notations related to Hopf algebras in a braided monoidal category.

Let $\mathcal{V}$ be a braided monoidal category with braiding $\sigma$, and let $H$ be a Hopf algebra with multiplication $m$, unit $u$, comultiplication $\Delta$, counit $\varepsilon$ and antipode $S$. For $M, N \in \mathcal{V}_H$, their tensor product $M \otimes N \in \mathcal{V}$ is a right $H$-module by
\begin{equation}
  \label{eq:H-mod-tensor}
  \triangleleft_{M \otimes N} = (\triangleleft_M \otimes \triangleleft_N) \circ (\id_M \otimes \sigma_{N,H} \otimes \id_H) \circ (\id_M \otimes \id_N \otimes \Delta),
\end{equation}
where $\triangleleft_M$ and $\triangleleft_N$ are the actions of $H$ on $M$ and $N$, respectively. The category $\mathcal{V}_H$ is a monoidal category with this operation. In a similar way, the category ${}_H \mathcal{V}$ is also a monoidal category. We note that $\mathcal{V}_H$ and ${}_H \mathcal{V}$ are rigid if $\mathcal{V}$ is rigid and $S$ is invertible.

We recall basic results on integrals of a braided Hopf algebra:
Suppose that $\mathcal{V}$ is rigid and admits equalizers.
Then the antipode of $H$ is invertible. An ($X$-based) {\em right integral in $H$} is a morphism $t: X \to H$ in $\mathcal{V}$ such that $m \circ (t \otimes H) = t \otimes \varepsilon$. The category of right integrals in $H$ (defined as the full subcategory of the category of objects over $H$ \cite[II.6]{MR1712872}) has a terminal object. We write it as $\Lambda: \mathrm{Int}(H) \to H$ and call $\mathrm{Int}(H) \in \mathcal{V}$ the {\em object of integrals} in $H$. It is known that $\mathrm{Int}(H)$ is an invertible object.
See \cite{MR1685417} and \cite{MR1759389} for the above results.

\begin{definition}
  The (right) {\em modular function} on $H$ is a morphism $\alpha_H: H \to \unitobj$ of algebras in $\mathcal{V}$ determined by the following equation:
  \begin{equation}
    \label{eq:br-Hopf-modular-ft}
    \alpha_H \otimes \id_{\mathrm{Int}(H)} = m \circ (\id_H \otimes \Lambda).
  \end{equation}
\end{definition}

We regard an object $V \in \mathcal{V}$ as a right $H$-module by defining the action of $H$ by the counit of $H$. We also identify an algebra morphism $\gamma: H \to \unitobj$ with the right $H$-module whose underlying object is $\unitobj \in \mathcal{V}$ and whose action is given by $\gamma$. With the above notation, the main result of this section is stated as follows:

\begin{theorem}
  \label{thm:mod-obj-br-Hopf}
  Let $\mathcal{V}$ be a braided finite tensor category over $k$, and let $H$ be a Hopf algebra in $\mathcal{V}$. The modular object of $\mathcal{C} = \mathcal{V}_H$ is given by
  \begin{equation*}
    \alpha_{\mathcal{C}} = \mathrm{Int}(H)^* \otimes \alpha_{\mathcal{V}} \otimes \alpha_H.
  \end{equation*}
\end{theorem}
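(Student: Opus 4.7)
The strategy is to apply Theorem~\ref{thm:mod-obj-3} to the forgetful tensor functor $U \colon \mathcal{C} \to \mathcal{B}$, where $\mathcal{C} = \mathcal{B}_H$. This $U$ is exact, faithful, and surjective in the sense of Remark~\ref{rem:NZ-thm} (every $V \in \mathcal{B}$ becomes an $H$-module via $\varepsilon$), hence satisfies the equivalent conditions of Lemma~\ref{lem:rel-mod-def}. Theorem~\ref{thm:mod-obj-3} then yields $U(\alpha_{\mathcal{C}}) \cong \chi_U \otimes \alpha_{\mathcal{B}}$, so the problem splits into two pieces: first, compute the relative modular object $\chi_U \in \mathcal{B}$; second, lift this description of the underlying object of $\alpha_{\mathcal{C}}$ to an identification inside $\mathcal{C}$ with the stated $H$-action.

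For the first piece, the left adjoint of $U$ is the free module functor $F(V) = V \otimes H$, so by Theorem~\ref{thm:rel-mod-1} the relative modular object is determined by $R \cong F(- \otimes \chi_U)$, where $R$ is the right adjoint of $U$. The Frobenius structure of $H$ coming from the integral $\Lambda \colon \mathrm{Int}(H) \to H$ (together with the antipode and comultiplication) produces an isomorphism $R(V) \cong V \otimes \mathrm{Int}(H)^{*} \otimes H$ of right $H$-modules with $H$ acting by multiplication on the rightmost factor. Comparing with $F(V \otimes \chi_U) = V \otimes \chi_U \otimes H$ forces $\chi_U \cong \mathrm{Int}(H)^{*}$, and hence $U(\alpha_{\mathcal{C}}) \cong \mathrm{Int}(H)^{*} \otimes \alpha_{\mathcal{B}}$ in $\mathcal{B}$.

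The principal obstacle is the second piece, since Theorem~\ref{thm:mod-obj-3} records only the underlying object of $\alpha_{\mathcal{C}}$. To recover the $H$-action I plan to use the coend formula $\alpha_{\mathcal{C}} \cong \int^{X \in \mathcal{C}} \Hom_{\mathcal{C}}(\unitobj, X)^{*} \bullet X$ from \S\ref{subsec:modular-obj}, interpreted inside $\mathcal{C}$. Since the coend is right exact and every $X \in \mathcal{C}$ is a quotient of a free module $V \otimes H$, it is enough to analyze the coend on the full subcategory of free modules, for which the universal property of the integral furnishes an isomorphism $\Hom_{\mathcal{C}}(\unitobj, V \otimes H) \cong \Hom_{\mathcal{B}}(\unitobj, V \otimes \mathrm{Int}(H))$. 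Pushing this identification through the coend should reproduce the underlying object $\mathrm{Int}(H)^{*} \otimes \alpha_{\mathcal{B}}$ and simultaneously record the inherited right $H$-action; the defining equation $\alpha_H \otimes \id_{\mathrm{Int}(H)} = m \circ (\id_H \otimes \Lambda)$ of the modular function is precisely the input needed to identify this action with the one on the tensor product $\mathrm{Int}(H)^{*} \otimes \alpha_H \otimes \alpha_{\mathcal{B}}$ in $\mathcal{B}_H$. The hardest technical point will be the bookkeeping involving the braiding $\sigma$ in carrying the free action on $V \otimes H$ through these identifications and matching it with the tensor-product action in $\mathcal{B}_H$.
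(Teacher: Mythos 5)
Your first step is sound as far as it goes: the forgetful functor $U\colon \mathcal{B}_H \to \mathcal{B}$ is an exact, faithful and surjective tensor functor, so Remark~\ref{rem:NZ-thm} makes Theorem~\ref{thm:mod-obj-3} applicable, and $\chi_U \cong \mathrm{Int}(H)^*$ is correct (the Frobenius isomorphism built from $\phi_H$ in \eqref{eq:br-Hopf-pairing} does identify the right adjoint of $U$ with $V \mapsto V \otimes \mathrm{Int}(H)^* \otimes H$, up to harmless braiding bookkeeping). But notice what this buys: since $U(\alpha_H) = \unitobj$, the conclusion $U(\alpha_{\mathcal{C}}) \cong \mathrm{Int}(H)^* \otimes \alpha_{\mathcal{B}}$ is structurally blind to the modular function, i.e.\ to the entire $H$-action that the theorem asserts. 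The whole content of Theorem~\ref{thm:mod-obj-br-Hopf} therefore lives in your ``second piece'' --- and if that piece worked as written it would subsume the first, since the coend formula computes $\alpha_{\mathcal{C}}$ inside $\mathcal{C}$ outright.

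The genuine gap is that the second piece is a plan, not a proof, and the step you defer (``pushing the identification through the coend'') is exactly where the difficulty is concentrated. Two concrete problems. First, the reduction of $\int^{X \in \mathcal{C}} \Hom_{\mathcal{C}}(\unitobj, X)^* \bullet X$ to free modules is justified not by right exactness of the coend but by density of free modules in $\mathcal{B}_H$ (every module is a reflexive coequalizer of free ones); the restricted coend is then taken over the Kleisli category, so dinaturality must be imposed along all morphisms $\Hom_{\mathcal{C}}(V \otimes H, W \otimes H) \cong \Hom_{\mathcal{B}}(V, W \otimes H)$, not just morphisms coming from $\mathcal{B}$ --- and it is precisely these extra dinaturality constraints that generate the $\alpha_H$-twist on the action. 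You never engage with them. Second, the assertion that the defining equation \eqref{eq:br-Hopf-modular-ft} is ``precisely the input needed'' underestimates the computation. In the paper the corresponding identification is Lemma~\ref{lem:Hopf-bim-dual}, $(H^{\vee})^{\mathrm{co}H} \cong \mathrm{Int}(H)^* \otimes \alpha_H$ as right $H$-modules, and its proof requires the braided Nakayama automorphism formula of Doi--Takeuchi (Lemma~\ref{lem:Nak-auto}), in which $S^{-2}$ and the monodromy $\Omega(\mathrm{Int}(H))_H$ enter and must be cancelled through a nontrivial graphical calculation; \eqref{eq:br-Hopf-modular-ft} alone does not deliver this. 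For comparison, the paper avoids Theorem~\ref{thm:mod-obj-3} here entirely: it factors the forgetful functor $\Theta_{\mathcal{C}} = \Theta_{\mathcal{B}} \circ \Theta_{\mathcal{C}/\mathcal{B}}$ on $\REX_{\mathcal{C}}(\mathcal{C})$, computes $\Theta_{\mathcal{B}}^*[J_{\mathcal{C}}] \cong (-) \otimes_H \alpha_{\mathcal{B}}$ (Lemma~\ref{lem:mod-obj-br-Hopf-2}) and $\Theta_{\mathcal{C}/\mathcal{B}}^*$ via the Bespalov--Drabant fundamental theorem for Hopf bimodules (Lemma~\ref{lem:mod-obj-br-Hopf-3}), with Lemma~\ref{lem:Hopf-bim-dual} as the computational heart --- in effect the disciplined version of the coend computation you sketch. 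Until you supply an analogue of Lemmas~\ref{lem:Nak-auto} and~\ref{lem:Hopf-bim-dual}, or an equivalent braiding-tracking computation, the identification of the $H$-action --- hence the appearance of $\alpha_H$ --- remains unproven.
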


The order of the tensorands in the right-hand side of the above equation is arbitrary, since they commute with each other up to isomorphism.

Since the antipode $S: H \to H$ is an anti-algebra isomorphism, $\mathcal{V}_H$ and ${}_H \mathcal{V}$ are equivalent. This equivalence is not monoidal in general, but preserves the unit object. Thus, by Corollary~\ref{cor:mod-obj-invariance} and the above theorem, we have the following description of the modular object of the category of left $H$-modules:

\begin{corollary}
  \label{cor:mod-obj-br-Hopf}
  The modular object of $\mathcal{C} = {}_H \mathcal{V}$ is given by
  \begin{equation*}
    \alpha_{\mathcal{C}} = \mathrm{Int}(H)^* \otimes \alpha_{\mathcal{V}} \otimes \overline{\alpha}_H,
  \end{equation*}
  where $\overline{\alpha}_H$ is the left modular function given by $\overline{\alpha}_H = \alpha_H \circ S$.
\end{corollary}

By Theorem~\ref{thm:mod-obj-br-Hopf} and Corollary~\ref{cor:mod-obj-br-Hopf}, we obtain:

\begin{corollary}
  For a Hopf algebra $H$ in $\mathcal{V}$, the following are equivalent:
  \begin{enumerate}
  \item The category of left $H$-modules in $\mathcal{V}$ is unimodular.
  \item The category of right $H$-modules in $\mathcal{V}$ is unimodular.
  \item $\mathrm{Int}(H) \cong \alpha_{\mathcal{V}}$ and $\alpha_{H} = \varepsilon$.
  \end{enumerate}
\end{corollary}

By an extension $A/B$ of Hopf algebras in a braided monoidal category $\mathcal{V}$, we mean a morphism $i_{A/B}: B \to A$ of Hopf algebras in $\mathcal{V}$ being monic as a morphism in $\mathcal{V}$. As in the ordinary case, the functor $\mathcal{V}_A \to \mathcal{V}_B$ induced by $i_{A/B}$ is called the {\em restriction functor}. As an application of Theorems~\ref{thm:mod-obj-3} and~\ref{thm:mod-obj-br-Hopf}, we will  show the following theorem:

\begin{theorem}
  \label{thm:FMS-braided}
  Let $\mathcal{V}$ be a braided finite tensor category over $k$. For an extension $A/B$ of Hopf algebras in $\mathcal{V}$, the following assertions are equivalent:
  \begin{enumerate}
  \item The restriction functor $\mathcal{V}_A \to \mathcal{V}_B$ is a Frobenius functor.
  \item $\mathrm{Int}(A) \cong \mathrm{Int}(B)$ and $\alpha_A \circ i_{A/B} = \alpha_B$.
  \end{enumerate}
\end{theorem}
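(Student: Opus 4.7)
My plan is to apply Theorem~\ref{thm:mod-obj-3} to the restriction functor $F = \mathrm{Res}^A_B : \mathcal{B}_A \to \mathcal{B}_B$ and to unpack the resulting formula for $\chi_F$ using Theorem~\ref{thm:mod-obj-br-Hopf}. The functor $F$ is an exact tensor functor between finite tensor categories with left adjoint $L = (-) \otimes_B A$ and right adjoint $R = \iHom_B(A, -)$. To verify that $F$ satisfies the equivalent conditions of Lemma~\ref{lem:rel-mod-def}, I will either invoke a braided analogue of the Nichols--Zoeller theorem (forcing $L$ to be exact, which is condition (4)) or, equivalently, check that $F$ is faithful and surjective and cite Remark~\ref{rem:NZ-thm}. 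Once this is established, the remark following Theorem~\ref{thm:rel-mod-1} reduces condition~(1) to the condition $\chi_F \cong \unitobj$ in $\mathcal{B}_B$.

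For the explicit computation of $\chi_F$, Theorem~\ref{thm:mod-obj-br-Hopf} yields
\begin{equation*}
\alpha_{\mathcal{B}_A} = \mathrm{Int}(A)^* \otimes \alpha_A \otimes \alpha_{\mathcal{B}},
\qquad
\alpha_{\mathcal{B}_B} = \mathrm{Int}(B)^* \otimes \alpha_B \otimes \alpha_{\mathcal{B}}.
\end{equation*}
Since $\mathrm{Int}(H)$ is automatically invertible in $\mathcal{B}$ and $\alpha_{\mathcal{B}}$ is invertible by hypothesis, the underlying $\mathcal{B}$-object of each modular object above is invertible, hence both modular objects themselves are invertible in $\mathcal{B}_A$ and $\mathcal{B}_B$ respectively. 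Theorem~\ref{thm:mod-obj-3} then applies and gives $\chi_F \cong F(\alpha_{\mathcal{B}_A}) \otimes \alpha_{\mathcal{B}_B}^*$. Because $F$ is strong monoidal, takes a trivial (counit-action) $A$-module on $X \in \mathcal{B}$ to the trivial $B$-module on $X$ (using $\varepsilon_A \circ i_{A/B} = \varepsilon_B$), and takes the $A$-module determined by a character $\alpha : A \to \unitobj$ to the $B$-module determined by $\alpha \circ i_{A/B}$, I obtain
\begin{equation*}
F(\alpha_{\mathcal{B}_A}) \;\cong\; \mathrm{Int}(A)^* \otimes (\alpha_A \circ i_{A/B}) \otimes \alpha_{\mathcal{B}} \quad \text{in } \mathcal{B}_B .
\end{equation*}

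Combining these formulas, the invertible factor $\alpha_{\mathcal{B}}$ cancels against its contribution to $\alpha_{\mathcal{B}_B}^*$ (and similarly for the rearrangement of the $\mathrm{Int}(B)$-factor), so the underlying $\mathcal{B}$-object of $\chi_F$ reduces to $\mathrm{Int}(A)^* \otimes \mathrm{Int}(B)$ and its $B$-action reduces to the tensor product of the character $\alpha_A \circ i_{A/B}$ with the inverse character of $\alpha_B$ inside the group of invertible $B$-modules supported on $\unitobj$. Therefore $\chi_F \cong \unitobj$ in $\mathcal{B}_B$ is equivalent to the two simultaneous conditions $\mathrm{Int}(A) \cong \mathrm{Int}(B)$ (from the underlying object) and $\alpha_A \circ i_{A/B} = \alpha_B$ (from the characters), which is precisely (2). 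The main technical obstacle is this last step: one must carefully check that the tensor product and dual of $B$-modules whose underlying $\mathcal{B}$-objects are invertible interact with the character morphisms $B \to \unitobj$ exactly as in the unbraided case, despite the presence of the braiding in $\mathcal{B}_B$. The invertibility assumptions ensure that the braiding contributions are absorbed into isomorphisms and do not alter the final identification.
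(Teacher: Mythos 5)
Your overall strategy coincides with the paper's: reduce assertion~(1) to $\chi_F \cong \unitobj$ via the remark following Theorem~\ref{thm:rel-mod-1}, compute $\chi_F \cong F(\alpha_{\mathcal{B}_A}) \otimes \alpha_{\mathcal{B}_B}^*$ from Theorems~\ref{thm:mod-obj-3} and~\ref{thm:mod-obj-br-Hopf}, and unpack the isomorphism class of $\chi_F$ into the two conditions of~(2). That unpacking is correct (and your observation that an object of $\mathcal{B}_H$ is invertible if and only if its underlying object is invertible in $\mathcal{B}$ is the right way to see that $\alpha_{\mathcal{B}_B}$ is invertible, which is where the hypothesis on $\alpha_{\mathcal{B}}$ enters). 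However, there is a genuine gap at exactly the step you defer: verifying that $F$ satisfies the equivalent conditions of Lemma~\ref{lem:rel-mod-def}. Your first option, invoking ``a braided analogue of the Nichols--Zoeller theorem,'' is not available: no such freeness theorem is known for Hopf algebras in a general braided finite tensor category, and nothing in the paper supplies one --- indeed, the paper's proof is structured precisely to avoid it. Your second option, citing Remark~\ref{rem:NZ-thm}, requires proving that $F$ is \emph{surjective}, i.e., that every $X \in \mathcal{B}_B$ is a quotient of the restriction of some $A$-module; faithfulness of restriction is trivial, so this surjectivity is the only nontrivial point, and it is the entire content of the paper's proof --- which your proposal never carries out.

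Concretely, the paper constructs, for each $X \in \mathcal{B}_B$ with underlying object $X_0$ (regarded in $\mathcal{B}_B$ with the counit action), an explicit epimorphism of right $B$-modules
\begin{equation*}
  X_0 \otimes I_0 \otimes ({}_B A)^*
  \xrightarrow{\ \id \otimes \id \otimes i_{A/B}^*\ }
  X_0 \otimes I_0 \otimes ({}_B B)^*
  \xrightarrow{\ \id_X \otimes \phi_B^{-1}\ }
  X_0 \otimes B
  \xrightarrow{\ \triangleleft_X\ } X,
\end{equation*}
where $I = \mathrm{Int}(B)$ and $\phi_B$ is the non-degeneracy isomorphism \eqref{eq:br-Hopf-pairing} for the integral pairing of $B$. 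Here $i_{A/B}^*$ is epic because $i_{A/B}$ is monic and duality in a finite tensor category is exact, and the source $X_0 \otimes I_0 \otimes ({}_B A)^*$ is the restriction of an $A$-module since $({}_A A)^* \in \mathcal{B}_A$ while $X_0$ and $I_0$ carry trivial actions. Thus the braided integral theory, through $\phi_B$, serves as the substitute for Nichols--Zoeller. Until you supply this (or an equivalent) verification that Theorem~\ref{thm:mod-obj-3} actually applies to the restriction functor, your argument establishes only the implication from applicability to the equivalence of (1) and (2), not the theorem itself.
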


The rest of this section is devoted to the proofs of Theorems~\ref{thm:mod-obj-br-Hopf} and~\ref{thm:FMS-braided}.

\subsection{Nakayama automorphism}

Let $\mathcal{V}$ be a braided rigid monoidal category with braiding $\sigma$, and suppose that it admits equalizers. We use the graphical techniques to express morphisms in $\mathcal{V}$. Our convention is that a morphism goes from the top of the diagram to the bottom ({\it cf}. \cite{MR1685417}). Following, the braiding $\sigma$, its inverse, the evaluation $\eval: X^* \otimes X \to \unitobj$ and the coevaluation $\coev: \unitobj \to X \otimes X^*$ are expressed as follows:
\begin{equation*}
  \sigma_{X,Y} = \begin{array}{c} \includegraphics{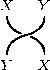} \end{array} \quad
  \sigma_{X,Y}^{-1} = \begin{array}{c} \includegraphics{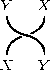} \end{array} \quad
  \eval = \begin{array}{c} \includegraphics{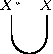} \end{array} \quad
  \coev = \begin{array}{c} \includegraphics{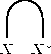} \end{array}
\end{equation*}
The axioms for Hopf algebras in $\mathcal{V}$ are expressed as in Figure~\ref{fig:br-Hopf-axioms}. Here, for a Hopf algebra $H$ in $\mathcal{V}$, we depict its structure morphisms as follows:
\begin{equation*}
  m = \begin{array}{c} \includegraphics{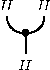} \end{array} \quad
  u = \begin{array}{c} \includegraphics{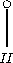} \end{array} \quad
  \Delta = \begin{array}{c} \includegraphics{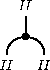} \end{array} \quad
  \varepsilon = \begin{array}{c} \includegraphics{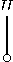} \end{array} \quad
  S = \begin{array}{c} \includegraphics{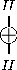} \end{array} \quad
  S^{-1} = \begin{array}{c} \includegraphics{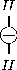} \end{array}
\end{equation*}

\begin{figure}
  \centering
  \includegraphics{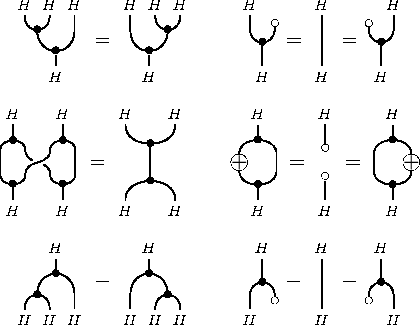}
  \caption{The axioms of a braided Hopf algebra}
  \label{fig:br-Hopf-axioms}
\end{figure}

As in the previous subsection, we fix a terminal object $\Lambda: \mathrm{Int}(H) \to H$ of the category of right integrals in $H$. By definition, we have
\begin{equation}
  \label{eq:br-Hopf-r-int-1}
  m \circ (\Lambda \otimes \id_H) = \Lambda \otimes \varepsilon.
\end{equation}
It is known that there exists a unique morphism $\lambda: H \to \mathrm{Int}(H)$ such that
\begin{equation}
  \label{eq:br-Hopf-r-int-2}
  (\lambda \otimes \id_H) \circ \Delta = \lambda \otimes u
  \text{\quad and \quad}
  \lambda \circ \Lambda = \id_{\mathrm{Int}(H)}.
\end{equation}
The paring $\lambda \circ m: H \otimes H \to \mathrm{Int}(H)$ is non-degenerate in the sense that
\begin{equation}
  \label{eq:br-Hopf-pairing}
  \phi_H: H \xrightarrow{\quad H \otimes \coev \quad} H \otimes H \otimes H^*
  \xrightarrow{\quad (\lambda \circ m) \otimes H^* \quad} \mathrm{Int}(H) \otimes H^*
\end{equation}
is invertible (see \cite{MR1685417} and \cite{MR1759389} for these results).

\begin{definition}[Doi-Takeuchi {\cite{MR1800707}}]
  The {\em Nakayama automorphism} of $H$ is the unique morphism $\mathcal{N}: H \to H$ in $\mathcal{V}$ such that
  \begin{equation}
    \label{eq:Nak-auto-def}
    \lambda \circ m \circ \sigma_{H,H} = \lambda \circ m \circ (\id_H \otimes \mathcal{N}).
  \end{equation}
\end{definition}

Let $K \in \mathcal{V}$ be an invertible object. The {\em monodromy around $K$} is the natural transformation $\Omega(K): \id_{\mathcal{V}} \to \id_{\mathcal{V}}$ defined by
\begin{equation}
  \label{eq:monodromy-1}
  \id_K \otimes \Omega(K)_V = \sigma_{V,K} \circ \sigma_{K,V}
\end{equation}
for $V \in \mathcal{V}$. The definition of a braiding implies
\begin{equation}
  \label{eq:monodromy-2}
  \Omega(\unitobj) = \id,
  \quad \Omega(K \otimes K') = \Omega(K) \circ \Omega(K')
  \text{\quad and \quad} \Omega(K^*) = \Omega(K)^{-1}
\end{equation}
for invertible objects $K$ and $K'$.

\begin{lemma}
  \label{lem:Nak-auto}
  The Nakayama automorphism of $H$ is given by
  \begin{equation}
    \label{eq:Nak-auto-DT}
    \mathcal{N} = S^{-2} \circ (\alpha_H \otimes \id_H) \circ \Delta \circ \Omega(\mathrm{Int}(H))_H
  \end{equation}
\end{lemma}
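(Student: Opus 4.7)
The plan is to use the non-degeneracy of the pairing $\lambda\circ m$ to characterize $\mathcal{N}$ uniquely, and then to verify directly that the proposed formula~\eqref{eq:Nak-auto-DT} satisfies the defining identity~\eqref{eq:Nak-auto-def}. Uniqueness is immediate from the invertibility of $\phi_H$ in~\eqref{eq:br-Hopf-pairing}: it implies that the map $f\mapsto\lambda\circ m\circ(\id_H\otimes f)$ is injective on morphisms $H\to H$, so it suffices to check the defining relation for a single candidate.

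For the verification I would work in the graphical calculus of Figure~\ref{fig:br-Hopf-axioms}. Three ingredients drive the computation. First, the right-integral property~\eqref{eq:br-Hopf-r-int-1} and its dual~\eqref{eq:br-Hopf-r-int-2} collapse compositions of $\Lambda$ (respectively $\lambda$) with the product (respectively coproduct) into counital or unital factors; this is the basic mechanism for absorbing extraneous $H$-strands from the diagram. Second, the defining equation~\eqref{eq:br-Hopf-modular-ft} of the modular function lets one commute an $H$-strand past $\Lambda$ at the cost of a scalar factor $\alpha_H$ and a braiding between $H$ and $\mathrm{Int}(H)$; this is what produces the $(\alpha_H\otimes\id_H)\circ\Delta$ piece of~\eqref{eq:Nak-auto-DT}. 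Third, a braided analogue of the classical identity $\lambda(ab)=\lambda(bS^2(a))$, derived from the antipode axiom $m\circ(S\otimes\id_H)\circ\Delta=u\circ\varepsilon$ together with the first ingredient, accounts for the $S^{-2}$ prefactor.

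Carrying this out, I would start from $\lambda\circ m\circ\sigma_{H,H}$, apply the third ingredient to bring $S^{-2}$ to the front, then the second ingredient to introduce the $\alpha_H$-twisted coproduct, and finally collect the residual braidings acting on the remaining $H$-strand. By the compatibility identities~\eqref{eq:monodromy-2} for monodromies of invertible objects, that residual twist should simplify to $\Omega(\mathrm{Int}(H))_H$, matching the right-hand side of~\eqref{eq:Nak-auto-DT} and, via the uniqueness step, completing the proof.

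The main obstacle is not conceptual but combinatorial. The formula is a straightforward transplantation of the Doi--Takeuchi calculation~\cite{MR1800707}, but in the case $\mathcal{B}=\mathsf{Vec}$ the swap is trivial and the monodromy factor is invisible, so their argument supplies no template for tracking $\sigma$ versus $\sigma^{-1}$. In a genuinely braided $\mathcal{B}$ every crossing is meaningful, and the cumulative imbalance between over- and under-crossings of $\mathrm{Int}(H)$-strands must be traced carefully and collected correctly into the single factor $\Omega(\mathrm{Int}(H))_H$. Without such care one easily obtains a variant of~\eqref{eq:Nak-auto-DT} differing by a stray braiding, so the diagrams must be drawn and manipulated explicitly rather than reasoned about schematically.
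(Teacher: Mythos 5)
Your framing---uniqueness of $\mathcal{N}$ from the invertibility of $\phi_H$ in \eqref{eq:br-Hopf-pairing}, followed by a graphical verification of the candidate---is consistent with the paper's proof, and your first two ingredients (the integral identities \eqref{eq:br-Hopf-r-int-1}, \eqref{eq:br-Hopf-r-int-2} and the defining equation \eqref{eq:br-Hopf-modular-ft} of $\alpha_H$) do occur in the paper's computation. But your third ingredient is a genuine gap. The ``classical identity'' $\lambda(ab)=\lambda(bS^2(a))$ is \emph{false} for a general finite-dimensional Hopf algebra: it holds only in the unimodular case, and the correct twisted cyclicity of the form $\lambda\circ m$ is precisely $\lambda(ba)=\lambda(a\,\mathcal{N}(b))$ with $\mathcal{N}(b)=S^{-2}\bigl(\alpha_H(b_{(1)})\,b_{(2)}\bigr)$---that is, it already contains the $\alpha_H$-factor and, in the braided setting, the monodromy, and is nothing other than the statement \eqref{eq:Nak-auto-DT} being proven. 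Consequently your plan---first ``bring $S^{-2}$ to the front'' via the third ingredient, then produce $(\alpha_H\otimes\id_H)\circ\Delta$ separately by commuting a strand past $\Lambda$ using \eqref{eq:br-Hopf-modular-ft}---mis-attributes where $\alpha_H$ comes from and, read literally, is circular: no $\alpha$-free $S^{\pm2}$-cyclicity of $\lambda\circ m$ can be derived from the antipode axiom and the integral identities independently of the claim itself.

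What is actually needed, and what your sketch defers entirely, is the diagrammatic core. The paper proves the equivalent form \eqref{eq:Nak-auto-DT-2}, $S^2\circ\mathcal{N}=(\alpha_H\otimes\id_H)\circ\Delta\circ\Omega(\mathrm{Int}(H))_H$, by first establishing the auxiliary braided Hopf identity \eqref{eq:lem-Nak-auto-pf-1} (in Sweedler form, $a_{(1)}b\otimes a_{(2)}=(ab_{(1)})_{(1)}\otimes(ab_{(1)})_{(2)}S(b_{(2)})$; Figure~\ref{fig:Nak-auto-Fig-1}), from which, combined with \eqref{eq:br-Hopf-r-int-1} and \eqref{eq:br-Hopf-r-int-2}, it derives three Frobenius-type formulas for $\Lambda$ and $\lambda$ (Figure~\ref{fig:Nak-auto-Fig-2}); it is these formulas---not a standalone $S^2$-cyclicity---that generate the antipode powers and allow the defining relation \eqref{eq:Nak-auto-def} to be solved for $\mathcal{N}$, with \eqref{eq:br-Hopf-modular-ft}, \eqref{eq:br-Hopf-r-int-2} and \eqref{eq:monodromy-1} entering at the final stage (Figure~\ref{fig:Nak-auto-Fig-3}). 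You yourself observe that the Doi--Takeuchi argument supplies no template for the crossing bookkeeping in a genuinely braided $\mathcal{B}$; since your proposal neither identifies \eqref{eq:lem-Nak-auto-pf-1} nor carries out any of that bookkeeping, the verification step is missing in substance, and the one concrete lemma-level identity you do propose is wrong as stated.
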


This formula has been given by Doi and Takeuchi under the assumption that $\mathcal{V}$ is, in a sense, built on the category of vector spaces \cite[Proposition 13.1]{MR1800707}. Since their argument cannot be applied to our general setting, we give a proof.

\begin{proof}
  Set $I = \mathrm{Int}(H)$, $\alpha = \alpha_H$ and $\omega = \Omega(I)_H$. Then~\eqref{eq:Nak-auto-DT} is equivalent to
  \begin{equation}
    \label{eq:Nak-auto-DT-2}
    S^2 \circ \mathcal{N} = (\alpha \otimes \id_H) \circ \Delta \circ \omega.
  \end{equation}
  To prove this equation, we first prove
  \begin{equation}
    \label{eq:lem-Nak-auto-pf-1}
    (m \otimes \id_H) \circ (\id_H \otimes \sigma_{H,H}) \circ (\Delta \otimes \id_H)
    = (\id_H \otimes m) \circ (\Delta m \otimes S) \circ (\id_H \otimes \Delta)
  \end{equation}
  as in Figure~\ref{fig:Nak-auto-Fig-1} (this equation reads ``$a_{(1)} b \otimes a_{(2)} = (a b_{(1)})_{(1)} \otimes (a b_{(1)})_{(2)} S(b_{(2)})$'' in an ordinary Hopf algebra in the Sweedler notation). Using \eqref{eq:br-Hopf-r-int-1}, \eqref{eq:br-Hopf-r-int-2} and~\eqref{eq:lem-Nak-auto-pf-1}, we obtain three formulas depicted in Figure~\ref{fig:Nak-auto-Fig-2}. Now \eqref{eq:Nak-auto-DT-2} is proved as in Figure~\ref{fig:Nak-auto-Fig-3}.
\end{proof}

\begin{figure}
  \centering
  \includegraphics{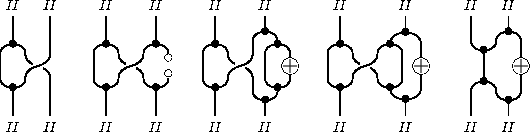}
  \caption{Graphical proof of Equation~\eqref{eq:lem-Nak-auto-pf-1}}
  \label{fig:Nak-auto-Fig-1}

  \bigskip
  \includegraphics{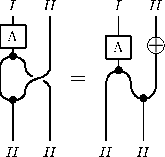}
  \qquad \includegraphics{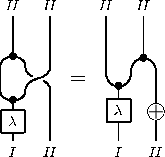}
  \qquad \includegraphics{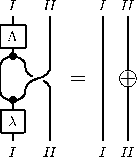}
  \caption{Formulas involving integrals}
  \label{fig:Nak-auto-Fig-2}

  \bigskip
  \begin{align*}
    \begin{array}{c} \includegraphics{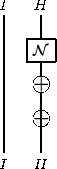} \end{array}
    & \overset{\text{Figure~\ref{fig:Nak-auto-Fig-2}}}{=}
    \begin{array}{c} \includegraphics{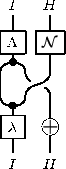} \end{array}
    \overset{\eqref{eq:Nak-auto-def}}{=}
    \begin{array}{c} \includegraphics{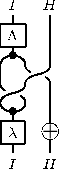} \end{array}
    {\ = \ }
    \begin{array}{c} \includegraphics{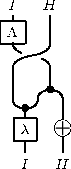} \end{array} \\
    & \overset{\text{Figure~\ref{fig:Nak-auto-Fig-2}}}{=}
    \begin{array}{c} \includegraphics{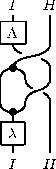} \end{array}
    \overset{\eqref{eq:br-Hopf-modular-ft}}{=}
    \begin{array}{c} \includegraphics{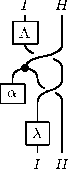} \end{array}
    \overset{\eqref{eq:br-Hopf-r-int-2}}{=}
    \ \begin{array}{c} \includegraphics{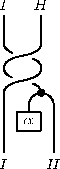} \end{array}
    \overset{\eqref{eq:monodromy-1}}{=}
    \begin{array}{c} \includegraphics{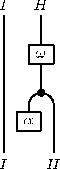} \end{array}    
  \end{align*}
  \caption{Graphical proof of Equation~\eqref{eq:Nak-auto-DT-2}}
  \label{fig:Nak-auto-Fig-3}
\end{figure}

\subsection{The fundamental theorem for Hopf bimodules}

Let $\mathcal{V}$ and $H$ be as in the previous subsection. For an $H$-bimodule $M \in {}_H \mathcal{V}_H$, we express the left action $\triangleright_M$ and the right action $\triangleleft_M$ of $H$ on $M$ respectively as follows:
\begin{equation*}
  \triangleright_M =
  \begin{array}{c} \includegraphics{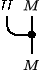} \end{array}
  \text{\quad and \quad}
  \triangleleft_M =
  \begin{array}{c} \includegraphics{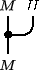} \end{array}
\end{equation*}
Given $M, N \in {}_H \mathcal{V}_H$, we define $M \barotimes N$ to be the $H$-bimodule with the underlying object $M \otimes N$ and with actions
\begin{equation}
  \label{eq:bimod-tensor}
  \triangleright_{M \barotimes N} =
  \begin{array}{c} \includegraphics{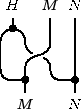} \end{array}
  \text{\quad and \quad}
  \triangleleft_{M \barotimes N} =
  \begin{array}{c} \includegraphics{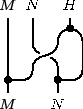} \end{array}
\end{equation}
The category ${}_H \mathcal{V}_H$ is a rigid monoidal category with respect to $\barotimes$. Note that the left dual object of $M \in {}_H \mathcal{V}_H$ (with respect to $\barotimes$), denoted by $M^{\vee}$, is the $H$-bimodule with underlying object $M^*$ and with actions
\begin{equation}
  \label{eq:bimod-dual}
  \triangleright_{M^{\vee}} =
  \begin{array}{c} \includegraphics{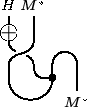} \end{array}
  \text{\quad and \quad}
  \triangleleft_{M^{\vee}} =
  \begin{array}{c} \includegraphics{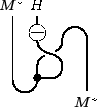} \end{array}
\end{equation}

The object $H \in \mathcal{V}$ is an $H$-bimodule by $m$. Moreover, the coalgebra $(H, \Delta, \varepsilon)$ in $\mathcal{V}$ is in fact a coalgebra in $({}_H \mathcal{V}_H, \barotimes, \unitobj)$. We denote by ${}^H_H \mathcal{V}^{}_H$ the category of left $H$-comodules in ${}_H \mathcal{V}_H$ and refer to an object of this category as a {\em Hopf $H$-bimodule}. By definition, an object of ${}^H_H \mathcal{V}_H^{}$ is an $H$-bimodule $M$ in $\mathcal{V}$ endowed with a left $H$-comodule structure $\delta_M: M \to H \otimes M$ in $\mathcal{V}$ such that
\begin{equation}
  \label{eq:Hopf-bimod-cond}
  \begin{array}{c} \includegraphics{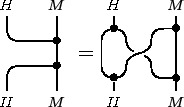} \end{array}
  \text{\quad and \quad}
  \begin{array}{c} \includegraphics{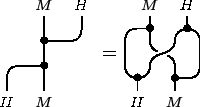} \end{array}
\end{equation}
(here, the coaction $\delta_M$ is expressed by the picture of $\triangleright_M$ turned upside down). Thus a Hopf $H$-bimodule defined here is the same thing as a two-fold Hopf bimodule of Bespalov and Drabant \cite[{Definition 3.6.1}]{MR1492897}.

For $M \in {}_H^H\mathcal{V}_H^{}$, we set $\pi_M := \triangleright_M \circ (S \otimes \id_M) \circ \delta_M: M\to M$. The {\em coinvariant} of $M$, denoted by $M^{\mathrm{co}H}$, is defined to be the equalizer of $\pi_M$ and $\id_M$. It is also an equalizer of $\delta_M$ and $u \otimes \id_M$. Thus, symbolically, we have
\begin{equation}
  \label{eq:Hopf-bimod-coinv}
  M^{\mathrm{co}H}
  = \mathrm{Eq}(\pi_M, \id_M)
  = \mathrm{Eq}(\delta_M, u \otimes \id_M).
\end{equation}
Let $M_{\mathrm{ad}}$ be the right $H$-module with underlying object $M$ and with action
\begin{equation}
  \label{eq:Hopf-bimod-adj}
  \triangleleft_M^{\mathrm{ad}}
  = \triangleright_M \circ (S \otimes \triangleleft_M) \circ (\sigma_{M,H} \otimes \id_H) \circ (\id_M \otimes \Delta).
\end{equation}
We call $\triangleleft_M^{\mathrm{ad}}$ the {\em adjoint action} and express it by the same diagram as a right action but with labeled `ad' (as in the first diagram in Figure~\ref{fig:H-dual-adj-act}). The morphism $\pi_M$ is in fact an $H$-linear idempotent on the right $H$-module $M_{\mathrm{ad}}$. Hence $M^{\mathrm{co}H}$ is a right $H$-module as a submodule of $M_{\mathrm{ad}}$.

An object of the form $H \barotimes M$, $M \in {}_H \mathcal{V}_H$, is a Hopf $H$-bimodule as a free left $H$-comodule in ${}_H \mathcal{V}_H$. We regard a right $H$-module as an $H$-bimodule by defining the left action by $\varepsilon$. The {\em fundamental theorem for Hopf bimodules} (Bespalov and Drabant \cite[Proposition 3.6.3]{MR1492897}) states that the functor
\begin{equation}
  \label{eq:Hopf-bimod-equiv}
  H \barotimes (-): \mathcal{V}_H^{}
  \to {}^H_H \mathcal{V}^{}_H,
  \quad V \mapsto H \barotimes V
  \quad (V \in \mathcal{V}_H)
\end{equation}
is an equivalence of categories with a quasi-inverse $(-)^{\mathrm{co}H}$.

The left dual object of a right $H$-comodule in ${}_H \mathcal{V}_H$ is a left $H$-comodule in a natural way. We are interested in the Hopf bimodule $H^{\vee} \in {}^H_H \mathcal{V}_H^{}$ (where $H$ is regarded as a right $H$-comodule in ${}_H \mathcal{V}_H$ by the comultiplication). In view of the fundamental theorem, it is essential to determine its coinvariant.

\begin{lemma}
  \label{lem:Hopf-bim-dual}
  $(H^{\vee})^{\mathrm{co}H} \cong \mathrm{Int}(H)^* \otimes \alpha_H$ as right $H$-modules.
\end{lemma}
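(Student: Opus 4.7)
My plan is to invoke the fundamental theorem for Hopf bimodules \eqref{eq:Hopf-bimod-equiv}: since $(-)^{\mathrm{co}H}$ is a quasi-inverse of $H \barotimes (-): \mathcal{B}_H \to {}^H_H\mathcal{B}^{}_H$, it suffices to exhibit an isomorphism
\[ \Theta: H \barotimes (\mathrm{Int}(H)^* \otimes \alpha_H) \xrightarrow{\ \sim\ } H^{\vee} \]
in ${}^H_H\mathcal{B}^{}_H$; applying $(-)^{\mathrm{co}H}$ to $\Theta$ will then yield the claimed isomorphism of right $H$-modules, using $(H \barotimes V)^{\mathrm{co}H} \cong V$ naturally for $V \in \mathcal{B}_H$. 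The underlying morphism of $\Theta$ in $\mathcal{B}$ is built from $\phi_H$ of \eqref{eq:br-Hopf-pairing}: inverting $\phi_H$, braiding once, and contracting $\mathrm{Int}(H) \otimes \mathrm{Int}(H)^*$ via evaluation produces an isomorphism $H \otimes \mathrm{Int}(H)^* \xrightarrow{\sim} H^{*}$, which is $\Theta$ at the level of $\mathcal{B}$.

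I would then verify $\Theta$ is a morphism of Hopf $H$-bimodules by checking, separately, compatibility with the right $H$-action, the left $H$-action and the left $H$-coaction coming from \eqref{eq:bimod-dual} and the dualization of $\Delta: H \to H \barotimes H$. Compatibility with the right $H$-action reduces to associativity of $m$ together with the right-integral equation \eqref{eq:br-Hopf-r-int-1}; this step is essentially what makes $\phi_H$ a well-defined pairing. The left-action compatibility is where the modular function must appear: the antipode twist in \eqref{eq:bimod-dual} for $\triangleright_{H^\vee}$, after being transported across $\Theta$, produces (via \eqref{eq:br-Hopf-modular-ft}) precisely a factor of $\alpha_H$ on the $\mathrm{Int}(H)^*$ slot. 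This is what accounts for the $\alpha_H$-twist in the right $H$-action of $\mathrm{Int}(H)^* \otimes \alpha_H$, even though $\alpha_H$ does not appear in the ingredients used to define $\Theta$.

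I expect the coaction compatibility to be the main obstacle. By adjunction, showing that $\Theta$ intertwines the free coaction $\Delta \otimes \id$ on the source with $\delta_{H^{\vee}}$ reduces to an identity about how the pairing $\lambda \circ m$ interacts with $\Delta$ on one variable; this in turn should fall out of the defining relation \eqref{eq:br-Hopf-r-int-2} of $\lambda$ together with the coassociativity of $\Delta$, after a braiding move in the spirit of Figure~\ref{fig:Nak-auto-Fig-1}. Once $\Theta$ is confirmed to be an isomorphism in ${}^H_H\mathcal{B}^{}_H$, applying $(-)^{\mathrm{co}H}$ completes the proof.
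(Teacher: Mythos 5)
Your global strategy is legitimate: since $(-)^{\mathrm{co}H}$ is quasi-inverse to \eqref{eq:Hopf-bimod-equiv} and the coinvariants functor carries the adjoint action \eqref{eq:Hopf-bimod-adj}, an isomorphism $H \barotimes (\mathrm{Int}(H)^* \otimes \alpha_H) \cong H^{\vee}$ in ${}^H_H\mathcal{B}^{}_H$ would indeed yield the lemma. Note that the paper organizes things differently: it dualizes the coequalizer presentation of $\lambda$ (quoted from \cite{MR1759389}), so that $\lambda^*: \mathrm{Int}(H)^* \to H^*$ is \emph{literally} the equalizer \eqref{eq:Hopf-bimod-coinv} defining $(H^{\vee})^{\mathrm{co}H}$; the comodule-compatibility you anticipate as "the main obstacle" is thereby obtained for free, and the whole lemma reduces to the single identity $\triangleleft_{H^{\vee}}^{\mathrm{ad}} \circ (\lambda^* \otimes \id_H) = \lambda^* \otimes \alpha_H$, which is then proved graphically.

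The genuine gap is in your left-action step, and it is exactly where the braiding bites. Transporting the antipode-twisted action \eqref{eq:bimod-dual} across $\phi_H$ requires knowing how the pairing $\lambda \circ m$ behaves under the cyclic exchange of its two arguments; this is governed by the Nakayama automorphism \eqref{eq:Nak-auto-def}, and by Lemma~\ref{lem:Nak-auto} (Doi--Takeuchi, whose braided proof occupies Figures~\ref{fig:Nak-auto-Fig-1}--\ref{fig:Nak-auto-Fig-3}) one has $\mathcal{N} = S^{-2} \circ (\alpha_H \otimes \id_H) \circ \Delta \circ \Omega(\mathrm{Int}(H))_H$. So besides the factor of $\alpha_H$ you predict, an $S^{-2}$ and a monodromy factor $\Omega(\mathrm{Int}(H))$ appear, and showing that these cancel against the antipodes and braidings built into \eqref{eq:bimod-dual} and \eqref{eq:bimod-tensor} is precisely the nontrivial computation of Figure~\ref{fig:H-dual-coinv}. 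The relation \eqref{eq:br-Hopf-modular-ft} you invoke is a statement about $m \circ (\id_H \otimes \Lambda)$, i.e.\ about the integral $\Lambda$, whereas $\phi_H$ in \eqref{eq:br-Hopf-pairing} is built from $\lambda$, whose defining property is \eqref{eq:br-Hopf-r-int-2}; the bridge between $\Lambda$, $\lambda$ and $\alpha_H$ is the set of identities of Figure~\ref{fig:Nak-auto-Fig-2}, which your sketch never establishes (your citation of \eqref{eq:br-Hopf-r-int-1} for the right-action step has the same mismatch). Finally, "braiding once" in the construction of $\Theta$ conceals a choice between $\sigma$ and $\sigma^{-1}$; the discrepancy between the two choices is exactly the monodromy $\Omega(\mathrm{Int}(H))$ of \eqref{eq:monodromy-1}, which is nontrivial in a general braided category, so with the wrong choice your compatibility checks fail by a monodromy factor. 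In short: the reduction via the fundamental theorem is fine, but the verification that $\Theta$ is a morphism of Hopf bimodules cannot be carried out with \eqref{eq:br-Hopf-modular-ft} and coassociativity alone; it needs the braided Nakayama formula \eqref{eq:Nak-auto-DT} and the attendant monodromy bookkeeping, which is the actual mathematical content of the paper's proof.
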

\begin{proof}
  As remarked in \cite[\S3.1]{MR1759389}, $\lambda$ is a coequalizer of
  \begin{equation*}
    f := \id_H \otimes {}^* u
    \text{\quad and \quad} g := (\id_H \otimes \eval) \circ (\Delta \otimes \id_{{}^* \! H}).
  \end{equation*}
  Note that $g^*$ is the coaction of $H$ on $H^{\vee} \in {}_H^H \mathcal{V}_H^{}$. By \eqref{eq:Hopf-bimod-coinv}, $\lambda^*: \mathrm{Int}(H)^* \to H^*$ is the coinvariant of $H^{\vee}$. Hence the claim of this lemma is equivalent to
  \begin{equation}
    \label{eq:Hopf-bimod-dual-pf-1}
    \triangleleft_{H^{\vee}}^{\mathrm{ad}} \circ (\lambda^* \otimes \id_H) = \lambda^* \otimes \alpha_H
  \end{equation}
  For simplicity, we set $I = \mathrm{Int}(H)$, $\alpha = \alpha_H$ and $\overline{\omega} = \Omega(I)_H^{-1}$. The adjoint action is computed as in Figure~\ref{fig:H-dual-adj-act}. The equation equivalent to \eqref{eq:Hopf-bimod-dual-pf-1} via
  \begin{equation*}
    \Hom_{\mathcal{V}}(I^* \otimes H, H^*) \cong \Hom_{\mathcal{V}}(H \otimes H, I)
  \end{equation*}
  is proved as in Figure~\ref{fig:H-dual-coinv}. 
\end{proof}

\begin{figure}
  \centering
  \begin{equation*}
    \begin{array}{c} \includegraphics{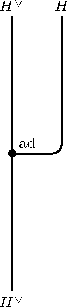} \end{array}
    \overset{\eqref{eq:Hopf-bimod-adj}}{=}
    \begin{array}{c} \includegraphics{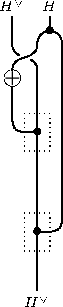} \end{array}
    \overset{\eqref{eq:bimod-dual}}{=}
    \begin{array}{c} \includegraphics{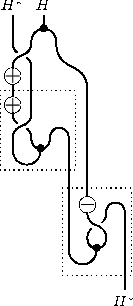} \end{array}
    {\ = \ }
    \begin{array}{c} \includegraphics{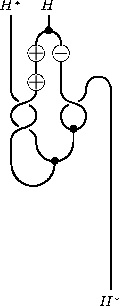} \end{array}
  \end{equation*}
  \caption{The computation of the adjoint action}
  \label{fig:H-dual-adj-act}

  \bigskip
  \begin{equation*}
    \begin{array}{c} \includegraphics{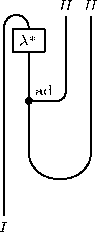} \end{array}
    \overset{\text{Figure~\ref{fig:H-dual-adj-act}}}{=}
    \begin{array}{c} \includegraphics{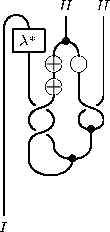} \end{array}
    {\ = \ }
    \begin{array}{c} \includegraphics{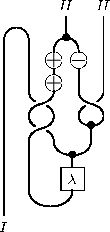} \end{array}
  \end{equation*}
  \begin{equation*}
    \overset{\eqref{eq:monodromy-1}, \eqref{eq:monodromy-2}}{=}
    \begin{array}{c} \includegraphics{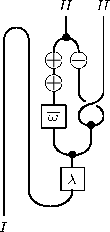} \end{array}
    \overset{\eqref{eq:Nak-auto-def}}{=}
    \begin{array}{c} \includegraphics{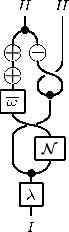} \end{array}
    \overset{\eqref{eq:Nak-auto-DT}}{=}
    \begin{array}{c} \includegraphics{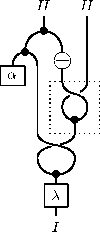} \end{array}
  \end{equation*}
  \begin{equation*}
    {\ = \ } \begin{array}{c} \includegraphics{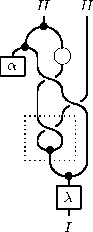} \end{array}
    {\ = \ } \begin{array}{c} \includegraphics{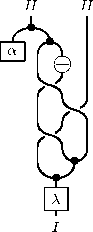} \end{array}
    {\ = \ } \begin{array}{c} \includegraphics{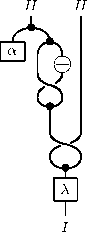} \end{array}
    {\ = \ } \begin{array}{c} \includegraphics{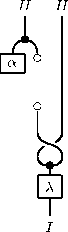} \end{array}
    {\ = \ } \begin{array}{c} \includegraphics{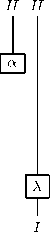} \end{array}
  \end{equation*}
  \caption{The computation of the action of $H$ on $(H^{\vee})^{\mathrm{co}H}$}
  \label{fig:H-dual-coinv}
\end{figure}

\subsection{Proof of Theorem~\ref{thm:mod-obj-br-Hopf}}

We now prove Theorem~\ref{thm:mod-obj-br-Hopf}. We recall the assumptions: $\mathcal{V}$ is a braided finite tensor category over $k$ with braiding $\sigma$, and $H$ is a Hopf algebra in $\mathcal{V}$. We regard $\mathcal{V}$ as a full subcategory of $\mathcal{C} := \mathcal{V}_H$ by regarding an object of $\mathcal{V}$ as a right $H$-module by the counit of $H$. There are forgetful functors
\begin{equation*}
  \REX_{\mathcal{C}}(\mathcal{C})
  \xrightarrow{\ \Theta_{\mathcal{C}} \ } \REX(\mathcal{C})
  \text{\quad and \quad}
  \REX_{\mathcal{C}}(\mathcal{C})
  \xrightarrow{\ \Theta_{\mathcal{C}/\mathcal{V}} \ }
  \REX_{\mathcal{V}}(\mathcal{C})
  \xrightarrow{\ \Theta_{\mathcal{V}} \ } \REX(\mathcal{C})
\end{equation*}
such that $\Theta_{\mathcal{C}} = \Theta_{\mathcal{C}/\mathcal{V}} \circ \Theta_{\mathcal{V}}$. Since the Cayley functor $\Upsilon_{\mathcal{C}}$ corresponds to $\Theta_{\mathcal{C}}$ under the identification $\REX_{\mathcal{C}}(\mathcal{C}) \approx \mathcal{C}$, the composition
\begin{equation*}
  \Theta_{\mathcal{C}}^*: \REX(\mathcal{C})
  \xrightarrow{\quad \Upsilon_{\mathcal{C}}^* \quad} \mathcal{C}
  \xrightarrow{\quad \approx \quad}
  \REX_{\mathcal{C}}(\mathcal{C})
\end{equation*}
is left adjoint to $\Theta_{\mathcal{C}}$. Now we set $J_{\mathcal{C}} = \Hom_{\mathcal{C}}(-, \unitobj)^* \bullet \unitobj \in \REX(\mathcal{C})$. By the definition of the modular object, we have:

\begin{lemma}
  \label{lem:mod-obj-br-Hopf-1}
  $\Theta_{\mathcal{C}}^* [J_{\mathcal{C}}] \cong (-) \otimes \alpha_{\mathcal{C}}$ in $\REX_{\mathcal{C}}(\mathcal{C})$.
\end{lemma}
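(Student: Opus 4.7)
The plan is to recognize that this lemma is essentially a matter of unwinding the definition of $\Theta_{\mathcal{C}}^*$ given in the paragraph immediately preceding the statement. By construction, $\Theta_{\mathcal{C}}^*$ factors as $\Upsilon_{\mathcal{C}}^*$ followed by the Eilenberg--Watts equivalence $\mathcal{C} \xrightarrow{\approx} \REX_{\mathcal{C}}(\mathcal{C})$ supplied by Theorem~\ref{thm:EW-thm-2} (specialized to $A = B = \unitobj$). So the entire computation reduces to tracing $J_{\mathcal{C}}$ through these two steps.

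First, I would apply $\Upsilon_{\mathcal{C}}^*$ to $J_{\mathcal{C}}$: this gives $\alpha_{\mathcal{C}}$ on the nose by Definition~\ref{def:mod-obj}, which says exactly that $\alpha_{\mathcal{C}} = \Upsilon_{\mathcal{C}}^*[J_{\mathcal{C}}]$. Second, I would apply the equivalence from Theorem~\ref{thm:EW-thm-2}: for the case $A = B = \unitobj$ this equivalence sends an object $M \in \mathcal{C}$ to the right-exact $\mathcal{C}$-module endofunctor $(-) \otimes_{\unitobj} M = (-) \otimes M$. Applying this to $\alpha_{\mathcal{C}}$ yields $(-) \otimes \alpha_{\mathcal{C}}$, which is the desired right-hand side.

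There is no real obstacle here; the only point worth verifying is that the Eilenberg--Watts equivalence of Theorem~\ref{thm:EW-thm-2}, in the trivial bimodule case, really is $M \mapsto (-) \otimes M$ (as opposed to some twist), and that this identification is the same one under which $\Upsilon_{\mathcal{C}}$ corresponds to $\Theta_{\mathcal{C}}$. Both facts are already implicit in the construction of $\Theta_{\mathcal{C}}^*$ in the text preceding the lemma, so the proof can be written in a single line:
\begin{equation*}
  \Theta_{\mathcal{C}}^*[J_{\mathcal{C}}] \ \cong\ (-) \otimes \Upsilon_{\mathcal{C}}^*[J_{\mathcal{C}}] \ =\ (-) \otimes \alpha_{\mathcal{C}},
\end{equation*}
where the first isomorphism is the definition of $\Theta_{\mathcal{C}}^*$ and the equality is Definition~\ref{def:mod-obj}. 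The lemma will then be invoked later to compare $\Theta_{\mathcal{C}}^*[J_{\mathcal{C}}]$ with a factorization through $\Theta_{\mathcal{B}}$, so its value is that it gives an explicit description in $\REX_{\mathcal{C}}(\mathcal{C})$ of what one gets from the left adjoint of the total forgetful functor $\Theta_{\mathcal{C}}$.
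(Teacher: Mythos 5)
Your proposal is correct and matches the paper exactly: the paper offers no separate proof, stating the lemma with the phrase ``By the definition of the modular object, we have,'' which is precisely your unwinding of $\Theta_{\mathcal{C}}^* = (\text{Theorem~\ref{thm:EW-thm-2} equivalence}) \circ \Upsilon_{\mathcal{C}}^*$ together with $\alpha_{\mathcal{C}} = \Upsilon_{\mathcal{C}}^*[J_{\mathcal{C}}]$ from Definition~\ref{def:mod-obj}. Your check that the Eilenberg--Watts equivalence in the trivial case $A = B = \unitobj$ is $M \mapsto (-) \otimes M$, compatibly with the identification under which $\Upsilon_{\mathcal{C}}$ corresponds to $\Theta_{\mathcal{C}}$, is exactly the point the paper relies on in the paragraph preceding the lemma.
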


Our main idea of the proof of Theorem~\ref{thm:mod-obj-br-Hopf} is to compute $\Theta_{\mathcal{C}}^* [J_{\mathcal{C}}]$ in terms of left adjoints of $\Theta_{\mathcal{V}}$ and $\Theta_{\mathcal{C}/\mathcal{V}}$. We first prove:

\begin{lemma}
  \label{lem:mod-obj-br-Hopf-2}
  The functor $\Theta_{\mathcal{V}}$ has a left adjoint, say $\Theta_{\mathcal{V}}^*$. We have
  \begin{equation*}
    \Theta_{\mathcal{V}}^*[J_{\mathcal{C}}] \cong (-) \otimes_H \alpha_{\mathcal{V}},
  \end{equation*}
  where $\alpha_{\mathcal{V}} \in \mathcal{V}$ is regarded as an $H$-bimodule by the counit of $H$.
\end{lemma}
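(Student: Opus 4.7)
The plan is to establish existence of the left adjoint $\Theta_{\mathcal{B}}^*$ by a monadicity argument parallel to Section~\ref{sec:mod-obj}, and then to identify $\Theta_{\mathcal{B}}^*[J_{\mathcal{C}}]$ by evaluating the resulting coend at the regular module $H \in \mathcal{C}$ under the equivalence $\REX_{\mathcal{B}}(\mathcal{C}) \approx {}_H \mathcal{B}_H$ of Theorem~\ref{thm:EW-thm-2}.

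For the first step, I would observe that $\mathcal{C} = \mathcal{B}_H$ is a finite left $\mathcal{B}$-module category under tensoring on the left. The proofs of Lemma~\ref{lem:Rex-C-as-modules} and Corollary~\ref{cor:Rex-C-monadic} apply verbatim with the acting finite tensor category $\mathcal{B}$ in place of $\mathcal{C}$ and with $\mathcal{M} = \mathcal{N} = \mathcal{C}$; this gives an isomorphism $\REX_{\mathcal{B}}(\mathcal{C}) \cong {}_{A_{\mathcal{B}}} \REX(\mathcal{C})$, where $A_{\mathcal{B}} = \int^{X \in \mathcal{B}} X^* \boxtimes X \in \mathcal{B}^{\env}$ acts via~\eqref{eq:C-env-act-rex}. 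Hence $\Theta_{\mathcal{B}}$ is monadic, with left adjoint given by the free-module construction $\Theta_{\mathcal{B}}^*[F] = A_{\mathcal{B}} \ogreaterthan F$, and by direct analogue of~\eqref{eq:Cayley-left-adj-F},
\begin{equation*}
  \Theta_{\mathcal{B}}^*[F](M) = \int^{X \in \mathcal{B}} X^* \otimes F(X \otimes M) \quad (F \in \REX(\mathcal{C}),\ M \in \mathcal{C}).
\end{equation*}

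Next, a quasi-inverse of $N \mapsto (-) \otimes_H N$ sends $G \in \REX_{\mathcal{B}}(\mathcal{C})$ to $G(H)$, on which the right $H$-action is inherited from $H \in \mathcal{B}_H$ and the left $H$-action comes from $m: H \otimes H \to H$ via the $\mathcal{B}$-module functor coherence. It therefore suffices to identify the $H$-bimodule $\Theta_{\mathcal{B}}^*[J_{\mathcal{C}}](H)$ with $\alpha_{\mathcal{B}}$ carrying the counit actions on both sides. The free/forgetful adjunction $(-) \otimes H \dashv U$ together with the triviality of the $H$-action on $\unitobj \in \mathcal{B}_H$ give $\Hom_{\mathcal{B}_H}(X \otimes H, \unitobj) \cong \Hom_{\mathcal{B}}(X, \unitobj)$, so that $J_{\mathcal{C}}(X \otimes H) \cong \Hom_{\mathcal{B}}(X, \unitobj)^* \bullet \unitobj$. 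Sliding the copower past the outer tensor, applying the duality $\Hom_{\mathcal{B}}(X, \unitobj) \cong \Hom_{\mathcal{B}}(\unitobj, X^*)$, and using \eqref{eq:coend-1} to substitute $X \mapsto X^*$ inside the coend yields
\begin{equation*}
  \Theta_{\mathcal{B}}^*[J_{\mathcal{C}}](H) = \int^X \Hom_{\mathcal{B}}(\unitobj, X^*)^* \bullet X^* = \int^Y \Hom_{\mathcal{B}}(\unitobj, Y)^* \bullet Y = \alpha_{\mathcal{B}},
\end{equation*}
where the last equality invokes the integral formula for the modular object proved just before Corollary~\ref{cor:mod-obj-invariance}.

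The main obstacle is verifying that the induced $H$-bimodule structure really is the trivial one. The right action is essentially automatic, since the integrand is throughout a $\bullet$-copower of $\unitobj \in \mathcal{B}_H$, which carries the counit action. For the left action, obtained by running $m$ through $\Theta_{\mathcal{B}}^*[J_{\mathcal{C}}]$ together with the $\mathcal{B}$-module functor coherence, the key observation is that under the adjunction identification the map $\Hom_{\mathcal{B}}(X, \unitobj) \to \Hom_{\mathcal{B}}(X \otimes H, \unitobj)$ induced by $\id_X \otimes m$ becomes $\tilde{\phi} \mapsto \tilde{\phi} \otimes \varepsilon$ (since $\varepsilon \circ m = \varepsilon \otimes \varepsilon$); dualizing and passing through the coend, this should collapse the left $H$-action to multiplication by $\varepsilon$. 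Making this rigorous requires a careful diagram chase threading the $\mathcal{B}$-module coherence of the free $A_{\mathcal{B}}$-module $A_{\mathcal{B}} \ogreaterthan J_{\mathcal{C}}$ through the coend formula, and this compatibility—rather than the underlying-object computation—is where the bulk of the genuine work lies.
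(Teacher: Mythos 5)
Your first step (monadicity of $\Theta_{\mathcal{B}}$ via the algebra $A_{\mathcal{B}} = \int^{X \in \mathcal{B}} X^* \boxtimes X$ acting on $\REX(\mathcal{C})$ through \eqref{eq:C-env-act-rex}, with $\Theta_{\mathcal{B}}^*[F] = A_{\mathcal{B}} \ogreaterthan F$) agrees with the paper, which cites Corollary~\ref{cor:Rex-C-monadic} for exactly this. Your underlying-object computation is also correct: evaluating at the regular module and using $\Hom_{\mathcal{C}}(X \otimes H, \unitobj) \cong \Hom_{\mathcal{B}}(X, \unitobj)$ does yield $\int^{Y \in \mathcal{B}} \Hom_{\mathcal{B}}(\unitobj, Y)^* \bullet Y \cong \alpha_{\mathcal{B}}$ in $\mathcal{B}$. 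But the lemma is not a statement about underlying objects: it asserts an isomorphism $\Theta_{\mathcal{B}}^*[J_{\mathcal{C}}] \cong (-) \otimes_H \alpha_{\mathcal{B}}$ in $\REX_{\mathcal{B}}(\mathcal{C}) \approx {}_H \mathcal{B}_H$ (Theorem~\ref{thm:EW-thm-2}), and it is this bimodule-level identification that feeds into Lemma~\ref{lem:mod-obj-br-Hopf-3} and the proof of Theorem~\ref{thm:mod-obj-br-Hopf}. Your final paragraph concedes precisely this point: the verification that the left $H$-action on $\Theta_{\mathcal{B}}^*[J_{\mathcal{C}}](H)$ --- obtained by threading $m$ through the colax $\mathcal{B}$-module coherence of the free module $A_{\mathcal{B}} \ogreaterthan J_{\mathcal{C}}$, which is itself only defined implicitly via the correspondence in Lemma~\ref{lem:Rex-C-as-modules} and the Fubini-theorem construction of $m_{A_{\mathcal{B}}}$ --- collapses to $\varepsilon$ is deferred as ``the bulk of the genuine work.'' The heuristic you give ($\varepsilon \circ m = \varepsilon \otimes \varepsilon$ under the free-forget adjunction) is the right germ, and the claim about the right action is indeed routine since the inclusion $\mathcal{B} \hookrightarrow \mathcal{B}_H$ is exact and hence preserves the coend; but as submitted the proof is incomplete at its decisive step, so there is a genuine gap.

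It is worth contrasting this with how the paper sidesteps the chase you are facing. Instead of evaluating the coend directly, the paper uses the adjunction $T \dashv i$ between $T = (-) \otimes_H \unitobj$ and the inclusion $i: \mathcal{B} \to \mathcal{C}$ to build $\Omega := \REX(T, i): \REX(\mathcal{B}) \to \REX(\mathcal{C})$, which by Lemma~\ref{lem:Rex-F-G} is a \emph{strong} $\mathcal{B}^{\env}$-module functor and therefore commutes with the free-module functors $A_{\mathcal{B}} \ogreaterthan (-)$ on both sides. One then chases $J_{\mathcal{B}}$: the adjunction $T \dashv i$ gives $\Omega[J_{\mathcal{B}}] \cong J_{\mathcal{C}}$, the definition of the modular object gives $A_{\mathcal{B}} \ogreaterthan J_{\mathcal{B}} \cong (-) \otimes \alpha_{\mathcal{B}}$ in $\REX_{\mathcal{B}}(\mathcal{B})$, and applying $\Omega$ yields $T(-) \otimes \alpha_{\mathcal{B}} \cong (-) \otimes_H \alpha_{\mathcal{B}}$. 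Because every isomorphism in this chain already lives in a category of $A_{\mathcal{B}}$-modules, the $\mathcal{B}$-module functor structure --- equivalently the $H$-bimodule structure under Theorem~\ref{thm:EW-thm-2} --- is carried along automatically, with no coherence computation. To repair your proof you must either carry out the diagram chase you postponed (checking compatibility of your coend isomorphisms with the $A_{\mathcal{B}}$-action, not just with the underlying objects) or adopt this transport-of-structure argument, which trades your explicit computation for the observation that $\Omega$ intertwines the monads.
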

\begin{proof}
  That $\Theta_{\mathcal{V}}$ has a left adjoint follows from Lemma~\ref{lem:Rex-C-as-modules}. To prove the claim, we note that the inclusion functor $i: \mathcal{V} \to \mathcal{C}$ has a left adjoint
  \begin{equation*}
    t: \mathcal{C} \to \mathcal{V}, \quad X \mapsto X \otimes_H \unitobj.
  \end{equation*}
  Since $i$ and $t$ are $k$-linear right exact $\mathcal{V}$-module functors, 
  by Lemma~\ref{lem:Rex-F-G}, they induce a $k$-linear right exact strong $\mathcal{V}^{\env}$-module functor
  \begin{equation*}
    \Omega := \REX(t, i): \REX(\mathcal{V}) \to \REX(\mathcal{C}),
    \quad F \mapsto i \circ F \circ t.
  \end{equation*}
  Now let $A = \int^{X \in \mathcal{V}} X \boxtimes X^*$ be the algebra in $\mathcal{V}^{\env}$ considered in Lemma~\ref{lem:coend-algebra}. Since $\Omega$ is a strong $\mathcal{V}^{\env}$-module functor, it induces a functor (also denoted by $\Omega$) between the categories of $A$-modules in such a way that the following diagram commutes up to isomorphism:
  \begin{equation*}
    \xymatrix{
      \REX(\mathcal{V})
      \ar[d]_{\Omega}
      \ar[rr]^{A \ogreaterthan (-)}
      & & {}_A \REX(\mathcal{V}) \ar[d]_{\Omega}
      \ar[rr]^{\text{Lemma~\ref{lem:Rex-C-as-modules}}}_{\cong}
      & & \REX_{\mathcal{V}}(\mathcal{V})
      \ar@{..>}[d]^{\Omega} \\
      \REX(\mathcal{C})
      \ar[rr]^{A \ogreaterthan (-)}
      & & {}_A \REX(\mathcal{C})
      \ar[rr]^{\text{Lemma~\ref{lem:Rex-C-as-modules}}}_{\cong}
      & & \REX_{\mathcal{V}}(\mathcal{C})
    }
  \end{equation*}
  Now we chase $J_{\mathcal{V}} := \Hom_{\mathcal{V}}(\unitobj, -)^* \bullet \unitobj \in \REX(\mathcal{V})$ around this diagram. Since the composition along the bottom row is $\Theta_{\mathcal{V}}^*$, we have
  \begin{equation*}
    \Theta_{\mathcal{V}}^* \Omega[ J_{\mathcal{V}}] \cong \Omega \big[ A \ogreaterthan J_{\mathcal{V}} \big]
  \end{equation*}
  in $\REX_{\mathcal{V}}(\mathcal{C}) \cong {}_A \REX(\mathcal{C})$. Since $t$ is left adjoint to $i$, we have
  \begin{align*}
    \Omega[J_{\mathcal{V}}]
    = \Hom_{\mathcal{V}}(t(-), \unitobj)^* \bullet \unitobj
    \cong \Hom_{\mathcal{C}}(-, \unitobj)^* \bullet \unitobj = J_{\mathcal{C}}.
  \end{align*}
  Since $F \mapsto A \ogreaterthan F$ is left adjoint to the forgetful functor ${}_A \REX(\mathcal{V}) \to \REX(\mathcal{V})$, we have $A \ogreaterthan J_{\mathcal{V}} \cong (-) \otimes \alpha_{\mathcal{V}}$ by the definition of the modular object. Hence we have
  \begin{equation*}
    \Omega \big[ A \ogreaterthan J_{\mathcal{V}} \big]
    \cong \Omega \big[ (-) \ogreaterthan \alpha_{\mathcal{V}} \big]
    \cong t(-) \otimes \alpha_{\mathcal{V}}
    \cong (-) \otimes_H \alpha_{\mathcal{V}}
  \end{equation*}
  in $\REX_{\mathcal{V}}(\mathcal{C})$. Therefore $\Theta_{\mathcal{V}}^*[J_{\mathcal{C}}] \cong (-) \otimes_H \alpha_{\mathcal{V}}$.
\end{proof}

We now consider the forgetful functor $\Theta_{\mathcal{C}/\mathcal{V}}: \REX_{\mathcal{C}}(\mathcal{C}) \to \REX_{\mathcal{V}}(\mathcal{C})$. 

\begin{lemma}
  \label{lem:mod-obj-br-Hopf-3}
  $\Theta_{\mathcal{C}/\mathcal{V}}$ has a left adjoint, say $\Theta_{\mathcal{C}/\mathcal{V}}^*$.
  For $M \in {}_H \mathcal{V}_H$, we have
  \begin{equation*}
    \Theta_{\mathcal{C}/\mathcal{V}}^* \Big[ (-) \otimes_H M \Big]
    \cong (-) \otimes (H^{\vee} \barotimes M)^{\mathrm{co}H}.
  \end{equation*}
\end{lemma}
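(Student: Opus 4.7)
The plan is to use the Eilenberg--Watts theorem (Theorem~\ref{thm:EW-thm-2}) to translate $\Theta_{\mathcal{C}/\mathcal{B}}$ into a concrete functor between bimodule categories, factor that functor into two pieces whose left adjoints are classical, and then invoke the fundamental theorem for Hopf bimodules to rewrite the result as a coinvariant.

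First I would apply Theorem~\ref{thm:EW-thm-2} with ambient category $\mathcal{C}$ and trivial algebras on the one hand, and with ambient $\mathcal{B}$ and $A=B=H$ on the other, to obtain equivalences $\REX_{\mathcal{C}}(\mathcal{C})\approx\mathcal{B}_H$, $V\mapsto(-)\otimes V$, and $\REX_{\mathcal{B}}(\mathcal{C})\approx{}_H\mathcal{B}_H$, $M\mapsto(-)\otimes_H M$. Under these, $\Theta_{\mathcal{C}/\mathcal{B}}$ corresponds to some functor $\tilde{I}\colon\mathcal{B}_H\to{}_H\mathcal{B}_H$. Evaluating the isomorphism $X\otimes_H\tilde{I}(V)\cong X\otimes V$ at $X=H$ identifies $\tilde{I}(V)=H\barotimes i(V)$, where $i\colon\mathcal{B}_H\hookrightarrow{}_H\mathcal{B}_H$ equips a right $H$-module with the trivial left action via $\varepsilon$ and $\barotimes$ is defined by~\eqref{eq:bimod-tensor}.

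Next, since $H\barotimes(-)$ and $i$ are exact, $\tilde{I}$ is $k$-linear and exact between finite abelian categories, so Lemma~\ref{lem:EW-thm} provides a left adjoint. From the factorization $\tilde{I}=(H\barotimes{-})\circ i$ one obtains $\tilde{I}^L=i^L\circ(H\barotimes{-})^L$. Duality in the rigid monoidal category $({}_H\mathcal{B}_H,\barotimes)$ yields $(H\barotimes{-})^L=H^{\vee}\barotimes(-)$, and a morphism $M\to i(V)$ in ${}_H\mathcal{B}_H$ must factor through the coequalizer of the left $H$-action and $\varepsilon\otimes\id_M$, so $i^L(M)=\unitobj\otimes_H M$. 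Hence $\tilde{I}^L(M)\cong\unitobj\otimes_H(H^{\vee}\barotimes M)$.

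Finally, as ${}^H_H\mathcal{B}_H$ is a module category over ${}_H\mathcal{B}_H$, the tensor product $H^{\vee}\barotimes M$ inherits a Hopf bimodule structure from $H^{\vee}\in{}^H_H\mathcal{B}_H$. I would then establish the natural isomorphism $\unitobj\otimes_H N\cong N^{\mathrm{co}H}$ of right $H$-modules for every $N\in{}^H_H\mathcal{B}_H$: both functors $\unitobj\otimes_H U(-)$ and $(-)^{\mathrm{co}H}$ into $\mathcal{B}_H$ compose with $\Phi=H\barotimes(-)$ to give the identity on $\mathcal{B}_H$ (for the former, because the left $H$-action on $H\barotimes V$ is free on the first tensorand, so the quotient recovers $V$ with its original right action; for the latter, by the fundamental theorem~\eqref{eq:Hopf-bimod-equiv}). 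Since $\Phi$ is an equivalence, both functors are quasi-inverses of $\Phi$ and therefore naturally isomorphic. Applying this to $N=H^{\vee}\barotimes M$ and transporting back through the equivalence $\mathcal{B}_H\approx\REX_{\mathcal{C}}(\mathcal{C})$ yields the claim.

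The main obstacle is the last step: the right $H$-action on $\unitobj\otimes_H N$ descends from the bimodule action, whereas the right action on $N^{\mathrm{co}H}$ comes from the adjoint action of~\eqref{eq:Hopf-bimod-adj}, and in the braided setting matching the two requires a nontrivial graphical calculation (similar in flavor to Figures~\ref{fig:H-dual-adj-act}--\ref{fig:H-dual-coinv}) that combines the braiding $\sigma$ of $\mathcal{B}$ with the antipode axiom.
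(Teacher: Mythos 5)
Your proposal is correct and arrives at the paper's formula, but by a genuinely different factorization. Both you and the paper begin identically: Theorem~\ref{thm:EW-thm-2} identifies $\REX_{\mathcal{C}}(\mathcal{C}) \approx \mathcal{B}_H$ and $\REX_{\mathcal{B}}(\mathcal{C}) \approx {}_H\mathcal{B}_H$, and the evaluation trick shows that $\Theta_{\mathcal{C}/\mathcal{B}}$ corresponds to $V \mapsto H \barotimes i(V)$. The paper then factors this composite \emph{through the Hopf bimodule category}, as $\mathcal{B}_H \xrightarrow{\ H \barotimes (-)\ } {}^H_H\mathcal{B}_H \xrightarrow{\ F\ } {}_H\mathcal{B}_H$ with $F$ the functor forgetting the comodule structure; the left adjoint is then read off as $(-)^{\mathrm{co}H} \circ (H^{\vee} \barotimes (-))$, using the fundamental theorem \eqref{eq:Hopf-bimod-equiv} for the first factor and the comodule-theoretic fact that $H^{\vee} \barotimes (-)$, with coaction on the first tensorand, is left adjoint to $F$. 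You instead factor entirely inside bimodule categories, as $(H \barotimes (-)) \circ i$ with $i: \mathcal{B}_H \to {}_H\mathcal{B}_H$, so that your two left adjoints are elementary: $H^{\vee} \barotimes (-)$ by plain rigidity of $({}_H\mathcal{B}_H, \barotimes)$, and $\unitobj \otimes_H (-)$ as a coequalizer. The fundamental theorem then enters only at the end, via uniqueness of quasi-inverses, to convert $\unitobj \otimes_H (H^{\vee} \barotimes M)$ into $(H^{\vee} \barotimes M)^{\mathrm{co}H}$. The paper's route lands on coinvariants immediately but requires knowing the left adjoint of the comodule-forgetful functor; yours trades that for two standard adjunctions plus the extra identification $\unitobj \otimes_H U(-) \cong (-)^{\mathrm{co}H}$, which is arguably a fact of independent interest.

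One clarification in your favor: the ``main obstacle'' you flag at the end is not actually an obstacle. Both $\unitobj \otimes_H U(-)$ and $(-)^{\mathrm{co}H}$ are functors into $\mathcal{B}_H$, so once each is shown to compose with $H \barotimes (-)$ to the identity of $\mathcal{B}_H$, they are naturally isomorphic as quasi-inverses of one and the same equivalence --- naturally isomorphic \emph{in} $\mathcal{B}_H$, hence as right $H$-modules. The matching of the descended action on $\unitobj \otimes_H N$ with the adjoint action \eqref{eq:Hopf-bimod-adj} on $N^{\mathrm{co}H}$ is therefore automatic; the only computation you owe is the natural isomorphism $\unitobj \otimes_H (H \barotimes V) \cong V$ of right $H$-modules, which is the naturality-of-counit check you already sketch (the braiding collapses under $\varepsilon$), not a Figure~\ref{fig:H-dual-coinv}-style calculation. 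One pedantic point: Lemma~\ref{lem:EW-thm} as stated produces right adjoints of right exact functors; for the left adjoint of $\tilde{I}$ either pass to opposite categories or simply rely on your explicit construction of $i^L$ and $(H \barotimes -)^L$, which makes the appeal to the lemma unnecessary.
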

\begin{proof}
  We consider the following diagram:
  \begin{equation*}
    \xymatrix{
      \REX_{\mathcal{C}}(\mathcal{C})
      \ar[rrrr]^{\Theta_{\mathcal{C}/\mathcal{V}}}
      \ar[d]_{\approx}
      & & & & \REX_{\mathcal{V}}(\mathcal{C})
      \ar[d]^{\approx} \\
      \mathcal{V}_H
      \ar[rr]^{H \barotimes (-)}
      & & {}^H_H \mathcal{V}_H^{}
      \ar[rr]^{F}
      & & {}_H \mathcal{V}_H,
    }
  \end{equation*}
  where the vertical arrows are the equivalences given by Lemma~\ref{lem:EW-thm} and $F$ is the functor forgetting the comodule structure. Since there is an isomorphism
  \begin{equation*}
    X \otimes_H (H \barotimes V) \cong X \otimes V
    \quad (X, V \in \mathcal{C}),
  \end{equation*}
  the diagram commutes up to isomorphisms. The functors in the diagram are equivalences except $F$ and $\Theta_{\mathcal{C}/\mathcal{V}}$, and the functor tensoring $H^{\vee}$ is left adjoint to $F$. Thus $\Theta_{\mathcal{C}/\mathcal{V}}$ has a left adjoint as the composition of functors having left adjoints. Hence we get the following diagram commuting up to isomorphism:
  \begin{equation*}
    \xymatrix{
      \REX_{\mathcal{C}}(\mathcal{C})
      \ar@{<-}[rrrr]^{\Theta_{\mathcal{C}/\mathcal{V}}^*}
      \ar@{<-}[d]_{\approx}
      & & & & \REX_{\mathcal{V}}(\mathcal{C})
      \ar@{<-}[d]^{\approx} \\
      \mathcal{V}_H
      \ar@{<-}[rr]^{(-)^{\mathrm{co}H}}
      & & {}^H_H \mathcal{V}_H^{}
      \ar@{<-}[rr]^{H^{\vee} \barotimes (-)}
      & & {}_H \mathcal{V}_H.
    }
  \end{equation*}
  Now the claim is obtained by chasing $M \in {}_H \mathcal{V}_H$ around this diagram.
\end{proof}

\begin{proof}[Proof of Theorem~\ref{thm:mod-obj-br-Hopf}]
  Since $\Theta_{\mathcal{C}} = \Theta_{\mathcal{V}} \circ \Theta_{\mathcal{C}/\mathcal{V}}$, we have
  \begin{equation}
    \label{eq:mod-obj-br-Hopf-1}
    \Theta_{\mathcal{C}}^*[J_{\mathcal{C}}] = (\Theta_{\mathcal{C}/\mathcal{V}}^* \circ \Theta_{\mathcal{V}}^*)[J_{\mathcal{C}}].
  \end{equation}
  The left-hand side is $(-) \otimes \alpha_{\mathcal{C}}$ by Lemma~\ref{lem:mod-obj-br-Hopf-1}. Set $I = \mathrm{Int}(H)$. The right-hand side is computed as follows:
  \begin{align*}
    (\Theta_{\mathcal{C}/\mathcal{V}}^* \circ \Theta_{\mathcal{V}}^*)[J_{\mathcal{C}}]
    & \cong \Theta_{\mathcal{C}/\mathcal{V}}^* \Big[ (-) \otimes_H \alpha_{\mathcal{V}} \Big]
    & & \text{(by Lemma~\ref{lem:mod-obj-br-Hopf-2})} \\
    & \cong (-) \otimes (H^{\vee} \barotimes \alpha_{\mathcal{V}})^{\mathrm{co}H}
    & & \text{(by Lemma~\ref{lem:mod-obj-br-Hopf-3})} \\
    & \cong (-) \otimes (H \barotimes (I^* \otimes \alpha_H \otimes \alpha_{\mathcal{V}}))^{\mathrm{co}H}
    & & \text{(by Lemma~\ref{lem:Hopf-bim-dual})} \\
    & \cong (-) \otimes I^* \otimes \alpha_H \otimes \alpha_{\mathcal{V}}.
  \end{align*}
  Now the result is obtained by evaluating the both sides of \eqref{eq:mod-obj-br-Hopf-1} at $\unitobj$.
\end{proof}

\subsection{Proof of Theorem~\ref{thm:FMS-braided}}

We now prove Theorem~\ref{thm:FMS-braided}. The assumption is that $\mathcal{V}$ is a braided finite tensor category over $k$ and $i_{A/B}: B \to A$ is an extension of Hopf algebras in $\mathcal{V}$.

\begin{proof}[Proof of Theorem~\ref{thm:FMS-braided}]
  Let $F: \mathcal{V}_A \to \mathcal{V}_B$ be the restriction functor. It is sufficient to show that Theorem~\ref{thm:mod-obj-3} is applicable to $F$. For $M \in \mathcal{V}_B$, we denote its underlying object by $M_0$ for clarity. As before, we regard $\mathcal{V} \subset \mathcal{V}_B$ (and thus $M_0 \in \mathcal{V}_B$). For $X \in \mathcal{V}_B$, we consider the morphism
  \begin{equation*}
    X_0 \otimes I_0 \otimes ({}_B A)^*
    \xrightarrow{\ \id_X \otimes \id_I \otimes i_{A/B}^* \ }
    X_0 \otimes I_0 \otimes ({}_B B)^*
    \xrightarrow{\ \id_X \otimes \phi_B^{-1} \ }
    X_0 \otimes B
    \xrightarrow{\ \triangleleft_X \ } X,
  \end{equation*}
  where $I = \mathrm{Int}(B)$ and $\phi_B$ is the isomorphism given by \eqref{eq:br-Hopf-pairing} with $H = B$. This is an epimorphism of right $B$-modules. Obviously, $X_0 \otimes I_0 \otimes ({}_B A)^*$ is a restriction of an $A$-module. Thus, by Remark~\ref{rem:NZ-thm}, we can apply Theorem~\ref{thm:mod-obj-3} to $F$.
\end{proof}

\section{Radford-Majid bosonization}
\label{sec:boson}

\subsection{Main results of this section}

Given a Hopf algebra $H$ over $k$ and a Hopf algebra $\mathbf{B}$ in the Yetter-Drinfeld category ${}^H_H \mathcal{YD}$, one can construct an ordinary Hopf algebra $\mathbf{B} \# H$, called the {\em Radford-Majid bosonization}. This construction is important in the Hopf algebra theory, especially in the classification program of pointed Hopf algebras.

There is a category-theoretical counterpart of the Radford-Majid bosonization: Let $\mathcal{C}$ be a monoidal category, and let $\mathbf{B} = (B, \tau)$ be a Hopf algebra in $\mathcal{Z}(\mathcal{C})$. Since $\mathcal{Z}(\mathcal{C})$ acts on $\mathcal{C}$ via the forgetful functor, the category ${}_{\mathbf{B}}\mathcal{C}$ of left $\mathbf{B}$-modules in $\mathcal{C}$ is defined. The tensor product of $M, N \in {}_{\mathbf{B}}\mathcal{C}$ is a left $\mathbf{B}$-module in $\mathcal{C}$ by
\begin{equation*}
  \triangleright_{M \otimes N} =
  (\triangleright_M \otimes \triangleright_N) \circ (\id_{B} \otimes \tau_{M} \otimes \id_N)
  \circ (\Delta \otimes \id_M \otimes \id_N),
\end{equation*}
where $\triangleright_M$ and $\triangleright_N$ are the actions of $\mathbf{B}$ on $M$ and $N$, respectively. The category ${}_{\mathbf{B}}\mathcal{C}$ is a monoidal category with respect to this operation. It is well-known that, if $\mathcal{C}$ is the category of left $H$-modules, then $\mathcal{Z}(\mathcal{C})$ can be identified with ${}^H_H \mathcal{YD}$, and the category ${}_{\mathbf{B}}\mathcal{C}$ is canonically isomorphic to the category of left $\mathbf{B} \# H$-modules as a monoidal category.

We now consider the case where $\mathcal{C}$ is a finite tensor category. Then ${}_{\mathbf{B}}\mathcal{C}$ is also a finite tensor category. In view of applications to generalizations of the bosonization construction, we are interested in determining the modular object of ${}_{\mathbf{B}}\mathcal{C}$. With the notation used in Section~\ref{sec:braided-Hopf}, our main result is now stated as follows:

\begin{theorem}
  \label{thm:mod-obj-boson}
  If the forgetful functor $U: \mathcal{Z}(\mathcal{C}) \to \mathcal{C}$ satisfies the equivalent conditions of Lemma~\ref{lem:rel-mod-def}, then the modular object of $\mathcal{A} = {}_{\mathbf{B}}\mathcal{C}$ is given by
  \begin{equation*}
    \alpha_{\mathcal{A}} = U(\mathrm{Int}(\mathbf{B}))^* \otimes \alpha_{\mathcal{C}} \otimes U(\overline{\alpha}_{\mathbf{B}}).
  \end{equation*}
\end{theorem}

We note that the assumption on $U$ is satisfied if, for example, the base field $k$ is perfect (see Example~\ref{ex:rel-mod-cent}).
This result resembles Corollary~\ref{cor:mod-obj-br-Hopf}.
However, the techniques we have used in Section~\ref{sec:braided-Hopf} do not seem to be applicable.
Thus we rather derive this theorem by using Corollary~\ref{cor:mod-obj-br-Hopf} and some fundamental properties of the relative modular object.

Before we give a proof of this theorem, we give some applications. Let $\mathcal{C}$ and $\mathbf{B}$ be as in the above theorem. The first one reduces to a result on the unimodularity of a finite-\hspace{0pt}dimensional Hopf algebra obtained by the bosonization.

\begin{corollary}
  \label{cor:boson-1}
  ${}_{\mathbf{B}}\mathcal{C}$ is unimodular if and only if $\alpha_{\mathcal{C}} \cong U(\mathrm{Int}(\mathbf{B}))$ and $\alpha_{\mathbf{B}} = \varepsilon$.
\end{corollary}

The second corollary below closely relates to \cite[Theorem 5.6]{MR1401518} that describes the Frobenius property of an extension of Hopf algebras obtained by the bosonization.

\begin{corollary}
  \label{cor:boson-2}
  For an extension $i: \mathbf{B} \to \mathbf{A}$ of Hopf algebras in $\mathcal{Z}(\mathcal{C})$, the following assertions are equivalent:
  \begin{enumerate}
  \item The restriction functor $\mathrm{res}^{\mathbf{A}}_{\mathbf{B}}: {}_{\mathbf{A}}\mathcal{C} \to {}_{\mathbf{B}}\mathcal{C}$ is a Frobenius functor.
  \item $U(\mathrm{Int}(\mathbf{A})) \cong U(\mathrm{Int}(\mathbf{B}))$ and $\alpha_{\mathbf{A}} \circ i = \alpha_{\mathbf{B}}$.
  \end{enumerate}
\end{corollary}
\begin{proof}
  One can check that $\mathrm{res}^{\mathbf{A}}_{\mathbf{B}}$ satisfies the equivalent conditions of Lemma~\ref{lem:rel-mod-def} in the same way as Theorem~\ref{thm:FMS-braided}. The claim follows from Theorems~\ref{thm:mod-obj-3} and~\ref{thm:mod-obj-boson}.
\end{proof}

\subsection{Proof of Theorem~\ref{thm:mod-obj-boson}}

For a monoidal category $\mathcal{V}$, we denote by $\mathcal{V}\mbox{-\underline{Mod}}$ the 2-category of essentially small left $\mathcal{V}$-module categories, lax $\mathcal{V}$-module functors and morphisms between them.
Let $A$ be an algebra in $\mathcal{V}$. In this paper, we have repeatedly used the fact that a lax $\mathcal{V}$-module functor $F: \mathcal{M} \to \mathcal{N}$ induces a functor ${}_A F: {}_A \mathcal{M} \to {}_A \mathcal{N}$ between the categories of $A$-modules. This fact is a part of the general fact that there is a 2-functor
\begin{equation*}
  {}_A (-): \mathcal{V}\mbox{-\underline{Mod}}
  \to \text{(the 2-category of categories)}, \quad \mathcal{M} \mapsto {}_A \mathcal{M}
\end{equation*}
given by taking the category of left $A$-modules. An important consequence is that an adjunction $F \dashv G$ in $\mathcal{V}\mbox{-\underline{Mod}}$ yields an ordinary adjunction ${}_A F \dashv {}_A G$.

Now let $\mathcal{C}$ be a finite tensor category over a field $k$. To clarify our argument, we consider a more general setting than Theorem~\ref{thm:mod-obj-boson}. Let $\mathcal{V}$ be a braided finite tensor category, and let $F: \mathcal{V} \to \mathcal{C}$ be a tensor functor satisfying the equivalent conditions of Lemma~\ref{lem:rel-mod-def} so that it admits the relative modular object. We moreover assume that $F$ is {\em central} in the sense that there is a braided monoidal functor $\widetilde{F}: \mathcal{V} \to \mathcal{Z}(\mathcal{C})$ such that $U \circ \widetilde{F} = F$.
Let $H$ be a Hopf algebra in $\mathcal{V}$. Since $\mathcal{V}$ acts on $\mathcal{C}$ via $F$, the category ${}_H \mathcal{C}$ of $H$-modules in $\mathcal{C}$ is defined. By the assumption that $F$ is central, ${}_H \mathcal{C}$ is in fact a finite tensor category and the functor ${}_H F: {}_H \mathcal{V} \to {}_H C$ induced by $F$ is a tensor functor.

As in Section~\ref{sec:braided-Hopf}, we regard $\mathcal{C}$ as a full subcategory of ${}_H \mathcal{C}$ by endowing an object of $\mathcal{C}$ with the trivial action of $H$.
For simplicity, we set $\mathcal{A} = {}_H \mathcal{C}$ and $\mathcal{B} = {}_H \mathcal{V}$.
Then there is the following relation between the modular object of $\mathcal{A}$ and $\mathcal{B}$.

\begin{lemma}
  \label{lem:boson-1}
  $\chi_F \otimes \alpha_{\mathcal{A}} \cong {}_H F(\alpha_{\mathcal{B}})$.
\end{lemma}
\begin{proof}
  Let $E$ be a double-left adjoint ($=$ a left adjoint of a left adjoint) of $F$. By the proof of Lemma~\ref{lem:rel-mod-def}, $E \cong F(-) \otimes \chi_F$ as left $\mathcal{V}$-module functors. By the argument at the beginning of this subsection, ${}_H E$ is a double-left adjoint of ${}_H F$. Thus, by Lemma~\ref{lem:rel-mod-def}, the relative modular object of ${}_H F$ is ${}_H E(\unitobj)$, that is the object $\chi_F \in \mathcal{C}$ endowed with the trivial $H$-action. Now the claim is proved by applying Theorem~\ref{thm:mod-obj-3} to the tensor functor ${}_H F$.
\end{proof}

\begin{proof}[Proof of Theorem~\ref{thm:mod-obj-boson}]
  The assumption is that $\mathcal{C}$ is a finite tensor category such that the forgetful functor $U: \mathcal{Z}(\mathcal{C}) \to \mathcal{C}$ admits the relative modular object and $\mathbf{B}$ is a Hopf algebra in $\mathcal{Z}(\mathcal{C})$. For simplicity, we set $\mathcal{A} = {}_{\mathbf{B}}\mathcal{C}$, $\mathcal{B} = {}_{\mathbf{B}}\mathcal{Z}(\mathcal{C})$, and $\mathbf{I} = \mathrm{Int}(\mathbf{B})$ and write ${}_{\mathbf{B}}U$ simply as $U$. Then we have
  \begin{align*}
    \alpha_{\mathcal{A}}
    & \cong \chi_{U}^* \otimes U(\alpha_{\mathcal{B}})
    & & \text{(by Lemma~\ref{lem:boson-1} with $F = U$)} \\
    & \cong \chi_{U}^* \otimes U(\mathbf{I}^* \otimes \alpha_{\mathcal{Z}(\mathcal{C})} \otimes \overline{\alpha}_{\mathbf{B}})
    & & \text{(by Corollary~\ref{cor:mod-obj-br-Hopf})} \\
    & = \chi_{U}^* \otimes U(\mathbf{I})^* \otimes U(\alpha_{\mathcal{Z}(\mathcal{C})}) \otimes U(\overline{\alpha}_{\mathbf{B}}) \\
    & \cong U(\mathbf{I})^* \otimes \chi_U^* \otimes U(\alpha_{\mathcal{Z}(\mathcal{C})}) \otimes U(\overline{\alpha}_{\mathbf{B}})
    & & \text{(since $\mathbf{I} \in \mathcal{Z}(\mathcal{C})$)} \\
    & \cong U(\mathbf{I})^* \otimes \alpha_{\mathcal{C}} \otimes U(\overline{\alpha}_{\mathbf{B}})
    & & \text{(by Theorem~\ref{thm:mod-obj-3})}. \qedhere
  \end{align*}
\end{proof}

%%% References
% \bibliographystyle{alphaabbr}
% \bibliography{../../common}
\def\cprime{$'$}

\end{document}